\theoremstyle{change}  % puts numbers IN FRONT of "Theorem"
\newtheorem{theorem}{Theorem}[section] % defines environment "Theorem".
\newtheorem{lemma}[theorem]{Lemma}  % defines environment "Lemma", that
\newtheorem{proposition}[theorem]{Proposition}
\newtheorem{corollary}[theorem]{Corollary}
\newtheorem{Remark}[theorem]{Remark}
\newtheorem{definition}[theorem]{Definition}
\newtheorem{notation}[theorem]{Notation}
\newtheorem{nothing}[theorem]{} % empty Theoremumgebung.
\newenvironment{proof}{\noindent{\bf Proof}\ }{\qed\bigskip}
\renewcommand{\le}{\leqslant} % needs amssymb-Paket
\renewcommand{\ge}{\geqslant}
\newcommand{\Aut}{\mathrm{Aut}}
\newcommand{\calC}{\mathcal{C}}
\newcommand{\calE}{\mathcal{E}}
\newcommand{\calF}{\mathcal{F}}
\newcommand{\calO}{\mathcal{O}}
\newcommand{\catfont}{\mathsf}
\newcommand{\CC}{\mathbb{C}}
\newcommand{\Def}{\mathrm{Def}}
\newcommand{\End}{\mathrm{End}}
\newcommand{\FF}{\mathbb{F}}
\newcommand{\HsetGD}{\lset{H}_G^{\Delta}}
\newcommand{\KsetHD}{\lset{K}_H^{\Delta}}
\newcommand{\KsetGD}{\lset{K}_G^{\Delta}}
\newcommand{\Hom}{\mathrm{Hom}}
\newcommand{\Ind}{\mathrm{Ind}}
\newcommand{\ind}{\mathrm{ind}}
\newcommand{\Inf}{\mathrm{Inf}}
\newcommand{\Isom}{\mathrm{Iso}}
\newcommand{\KK}{\mathbb{K}}
\newcommand{\lexp}[2]{\setbox0=\hbox{$#2$} \setbox1=\vbox to
                 \ht0{}\,\box1^{#1}\!#2}
\newcommand{\lMod}[1]{\llap{\phantom{|}}_{#1}\catfont{Mod}}
\newcommand{\lset}[1]{\llap{\phantom{|}}_{#1}\catfont{set}}
\newcommand{\NN}{\mathbb{N}}
\newcommand{\Out}{\mathrm{Out}}
\newcommand{\qed}{\nobreak\hfill
                  \vbox{\hrule\hbox{\vrule\hbox to 5pt
                  {\vbox to 8pt{\vfil}\hfil}\vrule}\hrule}}
\newcommand{\Res}{\mathrm{Res}}
\newcommand{\res}{\mathrm{res}}
\newcommand{\ZZ}{\mathbb{Z}}
\newcommand{\BD}{B^{\Delta}}
\newcommand{\be}{\textbf{e}^\Delta}
\newcommand{\FFTD}{\mathbb{F}T^{\Delta}}
\newcommand{\Proj}{\mathrm{Proj}}
\renewcommand{\beta}{\delta}
\title{$p$-Bifree biset functors}
\author{Olcay COŞKUN$^1$ and Deniz YILMAZ$^2$}
\date{
	$^1$ Mathematics Research Center, Baku, Azerbaijan \\ 
   \texttt{olcay.coshkun@bsu.edu.az}\\%
	$^2$ Bilkent University, Ankara, Turkey \\ \texttt{d.yilmaz@bilkent.edu.tr} \\[2ex]%
%	\today
}
\begin{document}
\sloppy

\maketitle
\begin{abstract}
We introduce and study the category of $p$-bifree biset functors for a fixed prime $p$, defined via bisets whose left and right stabilizers are $p'$-groups. This category naturally lies between the classical biset functors and the diagonal $p$-permutation functors, serving as a bridge between them. Every biset functor and every diagonal $p$-permutation functor restricts to a $p$-bifree biset functor.

We classify the simple $p$-bifree biset functors over a field $K$ of characteristic zero, showing that they are parametrized by pairs $(G,V)$, where $G$ is a finite group and $V$ is a simple $K\mathrm{Out}(G)$-module. As key examples, we compute the composition factors of several representation-theoretic functors in the $p$-bifree setting, including the Burnside ring functor, the $p$-bifree Burnside functor, the Brauer character ring functor, and the ordinary character ring functor.
We further investigate classical simple biset functors, $S_{C_p \times C_p, \mathbb{C}}$ and $S_{C_q \times C_q, \mathbb{C}}$ for a prime $q\neq p$. 

{\flushleft{\bf MSC2020:}} 18A25, 18B99, 19A22, 20J15. 
{\flushleft{\bf Keywords:}} biset functor, diagonal $p$-permutation functor, simple functor, Burnside ring, character ring.
\end{abstract}

\section{Introduction}

The theory of biset functors, which was introduced and extensively developed by Bouc, occupies a central position in the functorial representation theory of finite groups. It enables a unified treatment of representation rings when the structural maps restriction, induction, deflation, inflation, and isogation are present. The completion of the classification of endo-permutation modules of $p$-groups \cite{Bouc06} and the description of the unit group of Burnside rings of $p$-groups \cite{Bouc07}, both due to Bouc, are two notable applications of the theory of biset functors.

Diagonal $p$-permutation functors, introduced by Bouc and the second author \cite{BoucYilmaz2020}, provide a functorial framework for studying structures involving actions of $p$-permutation bimodules with additional constraints. By replacing bisets with $p$-permutation bimodules whose vertices are twisted diagonals, this theory captures essential representation-theoretic phenomena, particularly those related to block theory. Diagonal $p$-permutation functors have already found applications in the block theory of finite groups; see for instance the finiteness result in terms of functorial equivalences, Theorem~10.6 in \cite{BoucYilmaz2022}, which is in the spirit of Puig's and Donovan's finiteness conjectures.

Although the two theories of biset functors and diagonal $p$-permutation functors are both defined on categories of finite groups, there is no direct functorial connection between them. However, their morphisms are related as follows. A diagonal $p$-permutation bimodule is a $p$-permutation bimodule whose indecomposable summands have (twisted) diagonal subgroups as vertices. The linearization map applied to bisets yields permutation, and hence $p$-permutation bimodules. In particular, the elementary bisets
\[
\Res^G_H\;\ (H \le G),\;\
\Ind^G_H\;\  (H \le G), \;\
\Isom(f) \;\  (f:G \xrightarrow{\sim} H),\]\[ 
\Inf^G_{G/N}, \Def^G_{G/N} \;\  (N \unlhd G,\ N \text{ is } p'\text{-normal})
\]
give rise to morphisms in the diagonal $p$-permutation category via linearization. In contrast, inflations and deflations along general normal subgroups do not yield diagonal $p$-permutation bimodules under linearization; see \cite[Lemma~4.2]{BoucYilmaz2020}. This obstruction prevents the existence of a direct functor between the categories of biset functors and of diagonal $p$-permutation functors.

Motivated by this observation, we introduce the notions of $p$-bifree bisets and $p$-bifree biset functors. A \emph{$p$-bifree biset} is a biset with $p'$-stabilizers on both sides. The category of $p$-bifree bisets is similar to the classical biset category, but only includes inflations and deflations via $p'$-normal subgroups. For a commutative ring $R$ with unity, we denote by $R\mathcal{C}^\Delta$ the category whose objects are finite groups and whose morphisms are given by the $R$-linear extension of the Grothendieck group $RB^\Delta(H,G)$ of $p$-bifree $(H,G)$-bisets. An $R$-linear functor from $R\mathcal{C}^\Delta$ to the category of $R$-modules is called a \emph{$p$-bifree biset functor over $R$}. Note that via restriction and linearization, every biset functor and every diagonal $p$-permutation functor becomes a $p$-bifree biset functor.

With this definition, the category of $p$-bifree biset functors lies naturally between classical biset functors and diagonal $p$-permutation functors. On one hand, it contains all classical biset functors via restriction to the $p$-bifree part of the biset category. On the other hand, diagonal $p$-permutation functors factor through it via the linearization map, since only bisets with $p'$-stabilizers induce diagonal $p$-permutation bimodules under linearization. In this sense, $p$-bifree biset functors form a common generalization of both theories, effectively building a bridge between them. 

Using Bouc's theory on linear functors \cite{Bouc1996}, we show that the simple $p$-bifree biset functors $S^\Delta_{G,V}$ are parametrized by the pairs $(G,V)$ of a finite group $G$ and a simple $R\Out(G)$-module $V$.  We also consider various representation rings as $p$-bifree biset functors and determine their composition factors. Our main results can be summarized as follows. 

\begin{itemize}
\item Let $\KK$ be a field of characteristic zero. We prove that (Corollary~\ref{cor compfactorBurnside}) the composition factors of the Burnside ring as the $p$-bifree biset functor are exactly the functors $S^\Delta_{G,\KK}$ where $G$ is a $B^\Delta_p$-group. See Definition~\ref{def BDelta-group} for the definition of $B^\Delta_p$-group. The proof follows \cite[Chapter~5]{Bouc2010a} very closely. 
\item The composition factors of the representable $p$-bifree biset functor $\KK B^\Delta(-,1)$ are exactly the functors $S^\Delta_{G,\KK}$ where $G$ is a $B$-group of $p'$-order (Corollary~\ref{cor compfactorsrepresentable}). 
\item Let $G$ be a finite group. The $\KK$-dimension of $S^\Delta_{1,\KK}(G)$ is equal to the number of conjugacy classes of cyclic $p'$-subgroups of $G$ (Theorem~\ref{thm dimensionoftrivialsimple}).
\item We also investigate the $p'$-bifree biset functor structures of the Brauer and ordinary character rings of finite groups.  Let $k$ be an algebraically closed field of characteristic $p>0$ and let $R_k(G)$ and $R_\CC(G)$ denote the Grothendieck groups of $kG$ and $\CC G$-modules, respectively.  We show that both $R_k(-)$ and $R_\CC(-)$ are semisimple $p$-bifree biset functors. One has
    \begin{align*}
        \KK R_k\cong \bigoplus_{(m,\xi)}S^\Delta_{\ZZ/m\ZZ, \CC_\xi}
    \end{align*}
    where $(m,\xi)$ runs through a set of pairs consisting of a positive $p'$-number $m$ and a primitive character $\xi$ of $(\ZZ/m\ZZ)^\times$ (Corollary~\ref{cor Brauercharacter}). Furthermore,
       \begin{align*}
        \CC R_\CC\cong \bigoplus_{(m,\xi)} S^\Delta_{\ZZ/m\ZZ, \CC_\xi}
    \end{align*}
    where $(m,\xi)$ runs through the set of pairs consisting of a positive integer $m$ and a character $\xi$ of $(\ZZ/m\ZZ)^\times$ such that $\xi_{p'}$ is primitive (Corollary~\ref{cor complexcharacter}). As a result of these we obtain a short exact sequence
\begin{align*}
    0\longrightarrow \bigoplus_{(m,\xi)} S^\Delta_{\ZZ/m\ZZ,\xi}\longrightarrow \CC R_\CC \longrightarrow \CC R_k\longrightarrow 0
\end{align*}
of $p$-bifree biset functors, where $(m,\xi)$ runs through the set of pairs consisting of a positive integer $m$ divisible by $p$ and an irreducible character $\xi$ of $(\ZZ/m\ZZ)^\times$ with primitive $p'$-part.
    \item Finally, we consider some simple biset functors as $p$-bifree biset functors. Let $S_{G,V}$ denote the simple biset functor associated to the pair $(G,V)$ of a finite group $G$ and simple $R\Out(G)$-module $V$. Then one has isomorphisms
    \begin{align*}
        S_{C_p\times C_p,\CC}\cong \bigoplus_{P: \text{non-cyclic } p\text{-group}} S^\Delta_{P,\CC}
    \end{align*}
and
    \begin{align*}
        S_{C_q\times C_q,\CC}\cong \bigoplus_{\substack{C_q\times C_q\times P\\ P: \text{cyclic } p\text{-group}}} S_{C_q\times C_q\times P,\CC}^\Delta
    \end{align*}
    of $p$-bifree biset functors (Theorems~\ref{thm elemabelian} and \ref{thm elemqabelian}).
\end{itemize}

\section{Category of $p$-free bisets}
Let $p>0$ be a prime and let $G,H$ and $K$ denote finite groups. 

\begin{definition}
    Let $X$ be an $(H,G)$-biset. We say $X$ is a \textit{$p$-bifree $(H,G)$-biset} if the left and right stabilizers of the $H\times G$-orbits of $X$ are $p'$-groups. In other words, $X$ is a disjoint union of transitive $(H,G)$-bisets of the form $\left[(H\times G)/L\right]$ where $k_1(L)$ and $k_2(L)$ are $p'$-groups.
\end{definition}

\begin{nothing}

\smallskip
{\rm (a)} Let $\HsetGD$ denote the category of $p$-bifree $(H,G)$-bisets. Using the Mackey formula, one shows that the composition of bisets induces a bilinear map $$\KsetHD\times \HsetGD\to \KsetGD.$$

Let $\BD(H,G)$ denote the Grothendieck group of the category $\HsetGD$. The product of bisets induces a well-defined map
\begin{align}\label{eqn composition}
   -\cdot_H-:\BD(K,H)\times \BD(H,G)\to \BD(K,G)\,.
\end{align}
For a subgroup $L\le H\times G$, set $q(L)=L/(k_1(L)\times k_2(L))$. One has canonical isomorphisms
\begin{align*}
    p_1(L)/k_1(L)\cong q(L)\cong p_2(L)/k_2(L)\,.
\end{align*}
Recall from \cite{Bouc2010a} that every transitive $(H,G)$-biset can be written as a product of five elementary bisets:
\begin{align*}
\left[\frac{H\times G}{L}\right] = \Ind^H_{p_1(L)} \circ \Inf^{p_1(L)}_{p_1(L)/k_1(L)} \circ \Isom(f)\circ \Def^{p_2(L)}_{p_2(L)/k_2(L)} \circ \Res^G_{p_2(L)}\,,
\end{align*}
where $f:p_2(L)/k_2(L)\to p_1(L)/k_1(L)$ is the canonical isomorphism. One has $\Inf^G_{G/N}\in \BD(G, G/N)$ if and only if $N$ is a $p'$-subgroup of $G$. Similarly, $\Def^G_{G/N}\in \BD(G/N,G)$ if and only if $N$ is a $p'$-subgroup of $G$. Hence $B^\Delta(H, G)$ is generated by bisets of the above form for $k_1(L)$ and $k_2(L)$ $p'$-groups.

\smallskip
{\rm (b)} Let $R$ be a commutative ring with $1$. Let $R\calC^\Delta$ denote the following category:
\noindent\begin{itemize}
\item objects: finite groups,
\item $\Hom_{\calC^\Delta}(G,H)=R\otimes_{\ZZ} \BD(H,G)=R\BD(H,G)$,
\item composition is induced from the map in (\ref{eqn composition}),
\item $\mathrm{Id}_{G}=[\mathrm{Id}_G]=[G]$.
\end{itemize}

\smallskip
{\rm (c)} Let $G$ be a finite group. We set
\begin{align*}
    I_G=\sum_{|H|<|G|}R\BD(G,H)\circ R\BD(H,G)\,.
\end{align*}
Note that $I_G$ is an ideal of the endomorphism ring $\End_{R\calC^\Delta}(G)=R\BD(G,G)$. The quotient

\begin{align*}
\calE^\Delta(G)=R\BD(G,G) /I_G
\end{align*}
is called the {\em essential algebra} of $G$ in $R\calC^\Delta$.

A transitive $p$-bifree $(G,G)$-biset $(G\times G)/L$ has zero image in $\calE^\Delta(G)$ if and only if $|q(L)|<|G|$.  Hence as in the usual biset category we have an isomorphism
\begin{align*}
\calE^\Delta(G)\cong R\Out(G) \,.
\end{align*}
\end{nothing}

\section{$p$-bifree biset functors}

In this section, we classify the simple objects in the category $\calF^\Delta_R$ of $p$-bifree biset functors over a commutative ring $R$. Our construction follows the general theory of linear biset functors developed by Bouc in \cite{Bouc1996}. 

\begin{definition}
    An $R$-linear functor $R\calC^\Delta\to \lMod{R}$ is called a {\em $p$-bifree biset functor over $R$}.
\end{definition}

Together with natural transformations, $p$-bifree biset functors over $R$ form an abelian category which we denote by $\calF^\Delta_R$. 

By \cite{Bouc1996}, the simple $p$-bifree biset functors over $R$ are parametrized by the pairs $(G,V)$ where $G$ is a finite group and $V$ is a simple $R\Out(G)$-module. For a given pair $(G,V)$ we denote the corresponding simple $p$-bifree biset functor by $S^\Delta_{G,V}$.

\begin{Remark}
    {\rm (a)} Every biset functor becomes a $p$-bifree biset functor via the restriction along the inclusion $R\calC^\Delta \subseteq R\calC$, where $R\calC$ denotes the usual biset category over $R$.

    \smallskip
    {\rm (b)} Let $R^\Delta_{pp_k}$ denote the diagonal $p$-permutation category. The linearization map $X\mapsto kX$ induces a functor $\calC^\Delta \to R^\Delta_{pp_k}$. This makes every diagonal $p$-permutation functor over $R$ a $p$-bifree biset functor.
\end{Remark}

\begin{definition}
    Let $F$ be a $p$-bifree biset functor and let $G$ be a finite group. We define the submodules $\underline{F}(G)$ and $\mathcal{J}F(G)$ of $F(G)$ by
    \begin{align*}
    \underline{F}(G) = \bigcap_{\substack{|H|<|G|\\ \alpha\in RB^\Delta(H, G)}}\mathrm{Ker}(F(\alpha): F(G)\to F(H))
\end{align*}
and
\begin{align*}
    \mathcal JF(G) = \sum_{\substack{|H|<|G|\\ \alpha\in RB^\Delta(G, H)}}\mathrm{Im}(F(\alpha): F(H)\to F(G))\,.
\end{align*}
\end{definition}
Notice that both $\underline{F}(-)$ and $\mathcal{J}F(-)$ are subfunctors of $F$. The subfunctor $\underline{F}(-)$ is called the \textit{restriction kernel} of $F$. These restriction kernels serve as a tool for detecting minimal groups and composition factors. They will be used repeatedly in later sections to describe the subfunctor structure and to prove that certain families of simple functors exhaust all composition factors. See Appendix for more properties of the restriction kernels.
\iffalse
\medskip
\textbf{Interesting Questions}

\smallskip
\textbf{1: } Investigate simple biset functors as $p$-bifree biset functors. Find the composition factors of a simple biset functor $S_{G,V}$ as a $p$-bifree biset functor.

\smallskip
\textbf{2: } Let $\FF$ be an algebraically closed field of characteristic zero. Investigate simple diagonal $p$-permutation functors over $\FF$ as $p$-bifree biset functors over $\FF$. Find the composition factors of a simple diagonal $p$-permutation functor $S_{L,u,V}$ as a $p$-bifree biset functor.

\smallskip
\textbf{3: } Describe $R\BD(G)\cong R\BD(G,1)$ as a subring of the Burnside ring $RB(G)$.

\smallskip
\textbf{4: } Study $R\BD$ as a $p$-bifree biset functor over $R$.

\smallskip
\textbf{5: } Study $RB$ as a $p$-bifree biset functor over $R$.

\smallskip
\textbf{6: } Let $k$ be an algebraically closed field of characteristic $p$. Let $B=kGb$ be a block algebra of $kG$. Study the diagonal $p$-permutation functors $\FFTD(-,G)$ and $\FFTD_{G,b}$ as $p$-bifree biset functors.

\smallskip
\textbf{7: } Describe the evaluations of the simple $p$-bifree biset functors. Find the dimensions of the evaluations. What is $S_{1,R}^\Delta$? 

\smallskip
\textbf{8: } Let $R_k(G)$ denote the Brauer character ring of $G$. Study the functor $R_k$ as a $p$-bifree biset functor.
\fi

\section{Burnside ring as a $p$-bifree biset functors}

Let $R$ be a commutative ring with identity and let $\KK$ be a field of characteristic zero. The {\em Burnside functor} $RB:R\calC^\Delta\to \lMod{R}$ is defined as
\begin{itemize}
\item $G\mapsto RB(G):=R\otimes_{\ZZ} B(G)$.
\item $X\in R\BD(H,G) \mapsto \left(RB(X):RB(G)\to RB(H),\quad U\mapsto X\cdot_G U\right)$.
\end{itemize}
In this section we analyze the structure of $RB$. It turns out that the results are very similar to those in Chapter 5 of \cite{Bouc2010a}, where the structure of the Burnside functor is described in terms of $B$-groups. In our setting, the same arguments apply almost line by line once one restricts to inflations and deflations along $p'$-normal subgroups. We include the full details here for completeness and to set up the theory in the context of $p$-bifree biset functors. The notion of a $B^\Delta_p$-group plays the role of a $B$-group in this setting, and many structural results, including the parametrization of composition factors and the poset of subfunctors, carry over with suitable modifications. The first and key result is the following characterization of evaluations of subfunctors. We refer to \cite[Lemma~5.2.1]{Bouc2010a} for its proof.

\begin{lemma}
    Let $F$ be a $p$-bifree biset subfunctor of $RB$. Then for any finite group $G$, the $R$-module $F(G)$ is an ideal of $RB(G)$.
\end{lemma}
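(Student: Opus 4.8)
The plan is to realize, for each finite $G$-set $X$, the operation ``multiplication by $[X]$'' on $RB(G)$ as the image under $RB$ of a suitable $p$-bifree $(G,G)$-biset. Since $F$ is a subfunctor of $RB$, the submodule $F(G)$ is then automatically stable under all these operations, and because the classes $[X]$ span $RB(G)$ over $R$, this shows $F(G)$ is an ideal of $RB(G)$.

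Concretely, I would fix a finite $G$-set $X$ and form the $(G,G)$-biset $M_X := X\times G$ with left action $g\cdot(x,a)=(gx,ga)$ and right action $(x,a)\cdot b=(x,ab)$, for $g,a,b\in G$ and $x\in X$. Writing $X=\bigsqcup_i G/H_i$, the biset $M_X$ decomposes as $\bigsqcup_i\bigl((G/H_i)\times G\bigr)$, and a direct check shows that each summand is a transitive $(G,G)$-biset whose point stabilizer at $(H_i,1)$ is the diagonal subgroup $\Delta H_i=\{(h,h):h\in H_i\}\le G\times G$. For $L=\Delta H_i$ one has $k_1(L)=k_2(L)=1$, which is certainly a $p'$-group; hence $M_X$ is a $p$-bifree $(G,G)$-biset and $[M_X]\in\BD(G,G)=\End_{R\calC^\Delta}(G)$.

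Next I would compute $RB(M_X)$. For a finite $G$-set $Y$, we have $M_X\cdot_G Y=(X\times G)\times_G Y$; using the right $G$-action on the $G$-factor of $M_X$ to normalize representatives to the form $(x,1)$ identifies this set with $X\times Y$, and tracking the residual left $G$-action shows it becomes the diagonal action of $G$ on $X\times Y$. Thus $RB(M_X)$ is exactly multiplication by $[X]$ on $RB(G)$. Since $F$ is a subfunctor of $RB$, we get $[X]\cdot F(G)=RB(M_X)\bigl(F(G)\bigr)=F(M_X)\bigl(F(G)\bigr)\subseteq F(G)$. Letting $X$ range over representatives of the transitive $G$-sets, the classes $[X]$ generate $RB(G)$ as an $R$-module, so $F(G)$ is an $RB(G)$-submodule of $RB(G)$, i.e.\ an ideal.

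The only point requiring care --- and the one place where passing to the $p$-bifree setting could in principle cause a problem --- is the verification that the multiplication biset $M_X$ actually lies in the $p$-bifree subcategory $R\calC^\Delta$. This works precisely because the vertices occurring in $M_X$ are the untwisted diagonals $\Delta H_i$, which have trivial left and right kernels regardless of $|H_i|$; in particular no constraint of $p'$-order on the $H_i$ is needed. The remaining steps --- the orbit decomposition of $M_X$, the tensor-product identification realizing diagonal multiplication, and the bookkeeping of left/right conventions --- are routine, exactly as in \cite[Lemma~5.2.1]{Bouc2010a}.
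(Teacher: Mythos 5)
Your proof is correct and is essentially the argument the paper intends: the paper simply cites \cite[Lemma~5.2.1]{Bouc2010a}, and your realization of multiplication by $[X]$ via the biset $X\times G$ with stabilizers the untwisted diagonals $\Delta H_i$ (hence $k_1=k_2=1$, so $p$-bifree) is exactly that argument together with the one extra check needed in the $p$-bifree setting.
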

Note that $\KK B(G)$ is a split semisimple $\KK$-algebra. In particular, every ideal of $\KK B(G)$ is generated by a set of primitive idempotents.

The primitive idempotents $e^G_H$ of $\KK B(G)$ are indexed by the conjugacy classes $[s_G]$ of subgroups $H\le G$.  Let $\be_G$ denote the $p$-bifree biset subfunctor of $\KK B$ generated by $e^G_G\in \KK B(G)$. We note that this is in general smaller than the biset subfunctor ${\bf e}_G$ of $\KK B$ generated by $e_G^G$. Recall from \cite[Theorem~5.2.4]{Bouc2010a} that for a normal subgroup $N$ of $G$ we have
\begin{align*}
\Def^G_{G/N} e^G_G = m_{G,N} e^{G/N}_{G/N}\,,
\end{align*}
where the deflation number $m_{G, N}$ is given by
\begin{align*}
m_{G,N}=\frac{1}{|G|}\sum_{XN=G}|X|\mu(X,G)\,.
\end{align*}
\begin{proposition}
Let $G$ be a finite group. The following conditions are equivalent.

\smallskip
{\rm (i)} The evaluation of the subfunctor $\be_G$ vanishes on all groups of smaller order, that is, $\be_G(H) = \{0\}$ for all $|H| < |G|$.

\smallskip
{\rm (ii)} If $\be_G(H) \ne \{0\}$ for some finite group $H$, then $G$ is isomorphic to a quotient $Y/X$ of a section $(Y, X)$ of $H$ with $X$ a $p'$-group.

\smallskip
{\rm (iii)} The deflation number $m_{G,N}$ vanishes for every non-trivial normal $p'$-subgroup $N \unlhd G$.

\smallskip
{\rm (iv)} The deflation $\Def^G_{G/N} e^G_G$ is zero in $\KK B^\Delta(G/N)$ for all non-trivial normal $p'$-subgroups $N$ of $G$.
\end{proposition}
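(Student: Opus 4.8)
The plan is to prove the cyclic chain of implications $(i)\Rightarrow(ii)\Rightarrow(iii)\Rightarrow(iv)\Rightarrow(i)$, mirroring the argument of \cite[Proposition~5.4.1]{Bouc2010a} but keeping track throughout of the restriction to $p'$-normal subgroups, which is the only place the $p$-bifree setting differs from the classical one.

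\medskip
For $(i)\Rightarrow(ii)$, I would argue contrapositively: suppose $\be_G(H)\ne\{0\}$ for some $H$. Since $\be_G$ is generated by $e^G_G$, the module $\be_G(H)$ is spanned by elements of the form $F(\beta)(e^G_G)$ for $\beta\in\KK B^\Delta(H,G)$, and a nonzero such element forces some transitive $p$-bifree biset $[(H\times G)/L]$ to act nontrivially on $e^G_G$. Writing $[(H\times G)/L]$ as the standard five-fold composite $\Ind^H_{p_1(L)}\circ\Inf^{p_1(L)}_{q(L)}\circ\Isom(f)\circ\Def^{p_2(L)}_{q(L)}\circ\Res^G_{p_2(L)}$, and using that $e^G_G$ is fixed by restriction to proper subgroups only up to lower terms (indeed $\Res^G_{p_2(L)}e^G_G$ has $e^{p_2(L)}_{p_2(L)}$-coefficient zero unless $p_2(L)=G$), one deduces $p_2(L)=G$, hence $G/k_2(L)\cong q(L)\cong p_1(L)/k_1(L)$ is a quotient of the section $(p_1(L),k_1(L))$ of $H$; crucially $k_2(L)$, and via the canonical isomorphism also $k_1(L)$, are $p'$-groups because $L$ is a $p$-bifree point. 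This gives exactly the section $(Y,X)=(p_1(L),k_1(L))$ with $X$ a $p'$-group and $Y/X\cong G$. Conversely $(ii)\Rightarrow(i)$ is immediate: if $|H|<|G|$ then $G$ cannot be a quotient of any section of $H$ (as $|Y/X|\le|Y|\le|H|<|G|$), so $\be_G(H)=\{0\}$.

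\medskip
For $(i)\Rightarrow(iii)$ (or directly $(ii)\Rightarrow(iii)$): given a nontrivial normal $p'$-subgroup $N\unlhd G$, the deflation $\Def^G_{G/N}$ lies in $\KK B^\Delta(G/N,G)$ precisely because $N$ is a $p'$-group, so $\Def^G_{G/N}e^G_G=m_{G,N}e^{G/N}_{G/N}\in\be_G(G/N)$. Since $|G/N|<|G|$, condition $(i)$ forces this to be zero, and as $e^{G/N}_{G/N}\ne 0$ in the semisimple algebra $\KK B(G/N)$ we conclude $m_{G,N}=0$. The implication $(iii)\Leftrightarrow(iv)$ is then just the displayed formula $\Def^G_{G/N}e^G_G=m_{G,N}e^{G/N}_{G/N}$ together with $e^{G/N}_{G/N}\ne 0$.

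\medskip
The substantive direction, and the one I expect to be the main obstacle, is $(iv)\Rightarrow(i)$: from the vanishing of all deflations along nontrivial $p'$-normal subgroups one must deduce that $\be_G(H)=\{0\}$ for \emph{every} $H$ with $|H|<|G|$. The strategy is to take an arbitrary transitive $p$-bifree biset $[(H\times G)/L]$ with $|H|<|G|$, decompose it into the five elementary pieces, and show it kills $e^G_G$. By the argument in $(i)\Rightarrow(ii)$ the only way it could fail to do so is if $p_2(L)=G$, i.e. the biset factors as $\Ind^H_{p_1(L)}\circ\Inf\circ\Isom(f)\circ\Def^G_{G/k_2(L)}$ with $k_2(L)$ a nontrivial $p'$-subgroup (nontrivial because $|p_1(L)/k_1(L)|=|G/k_2(L)|\le|p_1(L)|\le|H|<|G|$); but then the leftmost deflation is $\Def^G_{G/k_2(L)}e^G_G$, which vanishes by $(iv)$. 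This is where one must be careful that the induction/inflation/isogation prefixes are themselves $p$-bifree morphisms (true, since $k_1(L)$ is $p'$ by the $p$-bifree hypothesis on $L$) so that the composite indeed lands in $\KK B^\Delta(H,G)$ and the vanishing propagates. Assembling these observations shows $\be_G$ is spanned on each such $H$ by zero, completing the cycle; I would cite \cite[Lemma~5.2.1]{Bouc2010a} and \cite[Theorem~5.2.4]{Bouc2010a} for the idempotent computations and otherwise reproduce the bookkeeping in the $p$-bifree category.
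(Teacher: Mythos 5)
Your proposal is correct in substance and follows essentially the same route as the paper, which establishes (iii)$\Leftrightarrow$(iv), (i)$\Rightarrow$(iv), (iv)$\Rightarrow$(ii), (ii)$\Rightarrow$(i); your (iv)$\Rightarrow$(i) is just the paper's (iv)$\Rightarrow$(ii)$\Rightarrow$(i) collapsed into one step, and the rest matches. The one place your write-up skips a beat is in (i)$\Rightarrow$(ii): after deducing $p_2(L)=G$ you have only shown $Y/X\cong G/k_2(L)$, yet you assert $Y/X\cong G$ without arguing that $k_2(L)=1$. This is precisely the point at which the paper invokes condition (iv) (``$S=1$''). Under your hypothesis (i) the needed vanishing also follows: if $k_2(L)\neq 1$, then since $k_2(L)$ is a normal $p'$-subgroup of $G$ the deflation $\Def^G_{G/k_2(L)}$ is a $p$-bifree morphism, so $\Def^G_{G/k_2(L)}e^G_G$ lies in $\be_G(G/k_2(L))=\{0\}$ because $|G/k_2(L)|<|G|$, and the whole composite dies. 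Adding that line closes the cycle; everything else in your argument is sound.
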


\begin{proof}
The equivalence {\rm (iii)} $\Longleftrightarrow$ {\rm (iv)} follows directly from Theorem~5.2.4 in \cite{Bouc2010a}. 
Suppose {\rm (i)} holds. Let $N \unlhd G$ be a non-trivial $p'$-subgroup. Then the image of $e^G_G$ under deflation lies in $\be_G(G/N) = \{0\}$, so $\Def^G_{G/N} e^G_G = 0$, and hence {\rm (iv)} holds.
Now suppose {\rm (iv)} holds and let $H$ be a finite group such that $\be_G(H) \ne \{0\}$. Then there exists $\varphi \in \KK \BD(H, G)$ such that $\varphi(e^G_G) \ne 0$.In particular there exist sections $(Y,X)$ of $H$ and $(T,S)$ of $G$, with $X$ and $S$ $p'$-groups, and an isomorphism $f: Y/X \to T/S$ such that
\[
\Ind^H_Y \Inf^Y_{Y/X} \Isom(f) \Def^T_{T/S} \Res^G_T (e^G_G) \ne 0.
\]
Since the restriction of $e^G_G$ to a proper subgroup is zero, we have $T=G$. Condition {\rm (iv)} further implies that $S=1$. This proves {\rm (ii)}.
Finally, {\rm (ii)} $\Rightarrow$ {\rm (i)} is immediate.
\end{proof}

\begin{definition}\label{def BDelta-group}
A finite group $G$ is called a {\em $B^\Delta_p$-group} if for any non-trivial normal $p'$-subgroup $N$ of $G$, the deflation number $m_{G,N}$ is equal to zero.
\end{definition}

\begin{Remark} When the condition on $N$ is removed, we recover Bouc's definition of a $B$-group. These two definitions can be compared as follows:

    \smallskip
    {\rm (i)} Every $B$-group is a $\BD_p$-group. 
    
    \smallskip
    {\rm (ii)} Every $p$-group is a $\BD_p$-group.

    \smallskip
    {\rm (iii)} A $p$-group which is also a $B$-group is necessarily elementary abelian, by 5.6.9 of \cite{Bouc2010a}. 

    \smallskip
    {\rm (iv)} A $p'$-group is a $\BD_p$-group if and only if it is a $B$-group. 
\end{Remark}

\begin{theorem}
Let $G$ and $H$ be finite groups. Then the following hold:

\smallskip
{\rm (i)} If $H$ is isomorphic to a quotient of $G$ by a $p'$-subgroup, then $\be_G \subseteq \be_H$.

\smallskip
{\rm (ii)} If $H$ is a $\BD_p$-group and $\be_G \subseteq \be_H$, then $H$ is isomorphic to a quotient of $G$ by a $p'$-subgroup.

\smallskip
{\rm (iii)} If $F$ is a subfunctor of $\KK B$ and $H$ is a minimal group of $F$, then $H$ is a $\BD_p$-group, $F(H) = \KK e^H_H$, and $\be_H \subseteq F$. In particular, $\be_H(H)=\KK e^H_H$ if $H$ is a $\BD_p$-group. 

\smallskip
{\rm (iv)} The minimal group $\beta(G)$ of $\be_G$ is uniquely determined up to isomorphism. One has $\be_G = \be_{\beta(G)}$, and $\beta(G)$ is isomorphic to a quotient of $G$ by a $p'$-normal subgroup. Moreover, for any $p'$-normal subgroup $N \unlhd G$ such that $G/N \cong \beta(G)$, one has $m_{G,N} \ne 0$.
\end{theorem}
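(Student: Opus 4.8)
The plan is to mirror the proof of the analogous statement in \cite[Chapter~5]{Bouc2010a}, tracking where the $p'$-hypothesis on normal subgroups enters, and to exploit parts (i)--(iii) as black boxes. First I would establish the existence of a minimal group for $\be_G$: since $\be_G \ne 0$ (it contains $e^G_G$ in its evaluation at $G$), it has a minimal group, call it $\beta(G)$, i.e.\ a group of smallest order among those $H$ with $\be_G(H) \ne 0$. Applying part~(iii) to $F = \be_G$ gives that $\beta(G)$ is a $\BD_p$-group, that $\be_G(\beta(G)) = \KK e^{\beta(G)}_{\beta(G)}$, and that $\be_{\beta(G)} \subseteq \be_G$.

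Next I would prove the reverse inclusion and the quotient statement simultaneously. Since $\be_G(\beta(G)) = \KK e^{\beta(G)}_{\beta(G)} \ne \{0\}$, the definition of the subfunctor generated by $e^G_G$ means there is a morphism $\varphi \in \KK\BD(\beta(G), G)$ with $\varphi(e^G_G) \ne 0$. Decomposing $\varphi$ into elementary bisets exactly as in the proof of the Proposition above, and using that the restriction of $e^G_G$ to a proper subgroup vanishes, forces the restriction to be along $G$ itself; the deflation step is along a normal $p'$-subgroup $S \unlhd G$ (recall that in $\BD$ only $p'$-deflations occur), and inflation along a normal $p'$-subgroup $X$ of $\beta(G)$, followed by an isomorphism $f: G/S \xrightarrow{\sim} \beta(G)/X$. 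But $\beta(G)$ is a $\BD_p$-group of minimal order in $\be_G$, so $X$ cannot be nontrivial (inflation is injective on evaluations and would produce a nonzero value on the proper quotient $\beta(G)/X$, contradicting minimality — or one argues directly via part~(ii) that $\be_G \subseteq \be_{\beta(G)/X}$ would shrink the minimal group). Hence $X = 1$ and $\beta(G) \cong G/S$ with $S$ a normal $p'$-subgroup of $G$. Then part~(i) gives $\be_G \subseteq \be_{\beta(G)}$, so $\be_G = \be_{\beta(G)}$.

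For uniqueness up to isomorphism: if $H$ is any minimal group of $\be_G$, then by the previous paragraph $\be_G = \be_H$ and $H \cong G/S$ for some normal $p'$-subgroup $S$; and $H$ is a $\BD_p$-group by~(iii). Now $\be_H \subseteq \be_G = \be_{\beta(G)}$, and conversely $\be_{\beta(G)} = \be_G = \be_H$; since $\beta(G)$ is also a $\BD_p$-group, part~(ii) applied in both directions ($\be_{\beta(G)} \subseteq \be_H$ gives $\beta(G)$ a quotient of $H$ by a $p'$-subgroup, and $\be_H \subseteq \be_{\beta(G)}$ gives $H$ a quotient of $\beta(G)$ by a $p'$-subgroup) forces $|H| = |\beta(G)|$ and then $H \cong \beta(G)$. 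Finally, the statement $m_{G,N} \ne 0$ whenever $N \unlhd G$ is a $p'$-normal subgroup with $G/N \cong \beta(G)$: by Bouc's deflation formula $\Def^G_{G/N} e^G_G = m_{G,N}\, e^{G/N}_{G/N}$, and the left-hand side, transported via $\Isom$ to $\KK B^\Delta(\beta(G))$, is a generator of $\be_G(\beta(G))$; if $m_{G,N}$ were zero then $e^{\beta(G)}_{\beta(G)}$ would not be reached through $G/N$, but one must check no other route produces it — more cleanly, $\be_{G/N} \subseteq \be_G$ by part~(i) applied to the quotient map, and if $m_{G,N} = 0$ then the generator $e^G_G$ deflates to $0$, so that actually $\be_G \subsetneq \be_{G/N} = \be_{\beta(G)}$ would be needed, contradicting $\be_G = \be_{\beta(G)}$. (Equivalently: $\be_{G/N}$ contains $\be_G$ strictly unless $m_{G,N}\neq 0$.) I expect the main obstacle to be this last point: carefully arguing that $m_{G,N} \ne 0$ rather than merely that \emph{some} $p'$-normal subgroup witnesses the quotient — one has to rule out a cancellation where the generator $e^G_G$ maps to $0$ under the specific deflation $\Def^G_{G/N}$ but the subfunctor is still generated correctly through a composite of other elementary maps. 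The resolution is that $\be_G$ is generated by the single element $e^G_G$ as a subfunctor, so its evaluation at $G/N$ is spanned by images of $e^G_G$ under morphisms $\KK\BD(G/N, G)$, and the only such morphisms producing the top idempotent $e^{G/N}_{G/N}$ factor through $\Def^G_{G/N}$ up to automorphisms of $G/N$; hence $m_{G,N} = 0$ would give $\be_G(G/N) = 0$, contradicting $G/N \cong \beta(G)$ being a (the) minimal group.
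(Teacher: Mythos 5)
Your proposal addresses only part (iv), taking (i)--(iii) as given; note that the paper does prove (i) and (ii) directly (short idempotent computations) and cites Bouc for (iii), so strictly speaking those parts are missing from your write-up even though your use of them is legitimate. Within (iv), your construction of $\beta(G)$ via part (iii), the identification $\be_G=\be_{\beta(G)}$, the realization of $\beta(G)$ as $G/S$ for some $p'$-normal $S$ (forcing $T=G$ and $X=1$ in the elementary decomposition by minimality), and the uniqueness argument via part (ii) are all correct and follow the same route as Bouc's Proposition~5.4.10, which is exactly what the paper's one-line proof of (iv) appeals to.

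The genuine gap is in the final claim, and it is exactly where you suspected. Your argument shows that \emph{some} $p'$-normal subgroup $S$ with $G/S\cong\beta(G)$ has $m_{G,S}\neq 0$, but the statement asserts this for \emph{every} $p'$-normal $N$ with $G/N\cong\beta(G)$. Your proposed resolution rests on the assertion that every morphism in $\KK\BD(G/N,G)$ carrying $e^G_G$ to a nonzero multiple of $e^{G/N}_{G/N}$ factors through the specific deflation $\Def^G_{G/N}$ up to automorphisms of $G/N$. This is false: the elementary decomposition only forces such a morphism to factor through $\Def^G_{G/S}$ for \emph{some} $p'$-normal $S$ with $G/S\cong G/N$, and two normal subgroups with isomorphic quotients need not be related by an automorphism of $G$, nor need their deflation numbers agree a priori, since $m_{G,N}=\frac{1}{|G|}\sum_{XN=G}|X|\mu(X,G)$ depends on the set of supplements of $N$ in $G$ and hence on the embedding of $N$, not merely on the isomorphism type of $G/N$. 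So $m_{G,N}=0$ is compatible with $\be_G(G/N)\neq 0$ in your argument, and no contradiction is reached. Closing this gap requires additional input --- in Bouc's treatment it comes from the transitivity formula $m_{G,M}=m_{G,N}\,m_{G/N,M/N}$ for $N\le M$ together with the equivalence ``$m_{G,N}\neq 0$ iff $\beta(G)$ is a quotient of $G/N$'' (stated as the theorem immediately following this one in the paper) --- none of which appears in your proposal.
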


\begin{proof}
(i) Let $N \unlhd G$ be a $p'$-subgroup such that $G/N \cong H$ via an isomorphism $f$. Then
\[
e^G_G \Inf^G_{G/N} \Isom(f) e^H_H = e^G_G,
\]
so $e^G_G \in \be_H(G)$ and hence $\be_G \subseteq \be_H$.

\smallskip
(ii) Since $e^G_G \in \be_H(G) = \KK \BD(G,H)e^H_H$, there exist sections $(T,S)$ of $G$ and $(Y,X)$ of $H$ with $S$ and $X$ $p'$-groups and an isomorphism $f:Y/X \to T/S$ such that
\[
e^G_G \Ind^G_T \Inf^T_{T/S} \Isom(f) \Def^Y_{Y/X} \Res^H_Y e^H_H \ne 0.
\]
As in previous arguments, since the restriction of $e^H_H$ to a proper subgroup is zero, we have $Y = H$, and since $H$ is a $\BD_p$-group, we get $X = 1$. Also because $e^G_G[G/L]=0$ for any proper subgroup $L$, we have $T=G$ and the result follows. 

\smallskip
(iii) This follows from the proof of Proposition~5.4.9 in \cite{Bouc2010a} applied in the $p$-bifree setting. The minimality of $H$ implies that $F(H)$ is generated by $e^H_H$, and hence $\be_H \subseteq F$.

\smallskip
(iv) This is a direct adaptation of Proposition~5.4.10 in \cite{Bouc2010a}, replacing all normal subgroups with $p'$-normal subgroups. 
\end{proof}

The following theorem collects properties of the minimal group $\beta(G)$ of the subfunctor $\be_G$. As in the classical case, the key results follow from Theorem~5.4.10 in \cite{Bouc2010a}, and the same arguments apply here with the necessary modifications to the $p'$-normal setting.

\begin{theorem}
Let $G$ be a finite group.
\begin{enumerate}[(i)]
%\smallskip
\item Let $H$ be a $\BD_p$-group that is isomorphic to a quotient of $G$ by a $p'$-normal subgroup. Then $H$ is also isomorphic to a quotient of $\beta(G)$ by a $p'$-normal subgroup.
%\smallskip
\item Let $N \unlhd G$ be a $p'$-normal subgroup. The following are equivalent:
\begin{enumerate}[(a)]
    \item $m_{G,N} \ne 0$,
    \item $\beta(G)$ is isomorphic to a quotient of $G/N$ by a $p'$-subgroup,
    \item $\beta(G) \cong \beta(G/N)$.
\end{enumerate}
%\smallskip
\item In particular, if $N \unlhd G$ is a normal $p'$-subgroup, then $G/N \cong \beta(G)$ if and only if $G/N$ is a $\BD_p$-group and $m_{G,N} \ne 0$.
\end{enumerate}
\end{theorem}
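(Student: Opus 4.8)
The plan is to deduce all three parts formally from the two structural theorems just established for $\be_G$ and its minimal group $\beta(G)$, together with transitivity of deflation inside $\KK\calC^\Delta$ and the resulting multiplicativity of the deflation numbers $m_{G,N}$; no new idempotent computation is required. Part~(i) is then immediate: if $H$ is a $\BD_p$-group isomorphic to a quotient of $G$ by a $p'$-normal subgroup, then $\be_G\subseteq\be_H$ by the inclusion statement of the previous theorem, and since $\be_G=\be_{\beta(G)}$ we get $\be_{\beta(G)}\subseteq\be_H$; applying that theorem again with $\beta(G)$ in place of $G$, and using once more that $H$ is a $\BD_p$-group, shows $H$ is isomorphic to a quotient of $\beta(G)$ by a $p'$-normal subgroup.

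For part~(ii) I would prove the cycle (a)$\Rightarrow$(c)$\Rightarrow$(b)$\Rightarrow$(a). For (a)$\Rightarrow$(c): since $N$ is a $p'$-subgroup, $\Def^G_{G/N}$ is a morphism in $\KK\calC^\Delta$ and $\Def^G_{G/N}e^G_G=m_{G,N}\,e^{G/N}_{G/N}$, so if $m_{G,N}\neq 0$ then $e^{G/N}_{G/N}\in\be_G(G/N)$, giving $\be_{G/N}\subseteq\be_G$; the reverse inclusion holds because $G/N$ is a quotient of $G$ by a $p'$-subgroup, hence $\be_G=\be_{G/N}$, and since the minimal group is an invariant of the subfunctor, $\beta(G)\cong\beta(G/N)$. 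For (c)$\Rightarrow$(b): $\beta(G/N)$ is by the previous theorem isomorphic to a quotient of $G/N$ by a $p'$-normal subgroup, so (c) immediately yields (b). For (b)$\Rightarrow$(a): say $\beta(G)$ is isomorphic to $(G/N)/(M/N)$ with $M/N$ a $p'$-subgroup of $G/N$; then $M\unlhd G$ is an extension of the $p'$-group $N$ by the $p'$-group $M/N$, hence itself a $p'$-normal subgroup of $G$ with $G/M\cong\beta(G)$, so $m_{G,M}\neq 0$ by the previous theorem (which records $m_{G,N'}\neq 0$ for every $p'$-normal $N'\unlhd G$ with $G/N'\cong\beta(G)$). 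Applying $\Def^G_{G/M}=\Def^{G/N}_{(G/N)/(M/N)}\circ\Def^G_{G/N}$ to $e^G_G$ and comparing the coefficients of $e^{G/M}_{G/M}$ gives $m_{G,M}=m_{G,N}\,m_{G/N,\,M/N}$, so $m_{G,N}\neq 0$.

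For part~(iii): if $G/N\cong\beta(G)$ then $G/N$ is a $\BD_p$-group because minimal groups of subfunctors of $\KK B$ are $\BD_p$-groups, and $m_{G,N}\neq 0$ by the previous theorem. Conversely, if $G/N$ is a $\BD_p$-group and $m_{G,N}\neq 0$, then (ii) gives $\beta(G)\cong\beta(G/N)$; and any $\BD_p$-group $L$ satisfies $\beta(L)\cong L$, since by the characterization of $\BD_p$-groups $\be_L$ vanishes on all groups of smaller order while $\be_L(L)=\KK e^L_L\neq 0$. Hence $G/N\cong\beta(G)$.

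The only step that is not pure bookkeeping is the multiplicativity $m_{G,M}=m_{G,N}\,m_{G/N,\,M/N}$ used in (b)$\Rightarrow$(a): it rests on transitivity of deflation holding inside $\KK\calC^\Delta$, which is legitimate precisely because $N$, $M/N$, and therefore $M$, are all $p'$-groups, so every map occurring stays in the $p$-bifree category. I expect the routine but genuinely $p$-bifree-specific verification that these intermediate subgroups remain $p'$-groups to be the main point requiring attention; everything else is a direct transcription of Bouc's arguments for $B$-groups, restricted to $p'$-normal subgroups.
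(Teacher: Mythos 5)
Your proof is correct and follows essentially the same route the paper intends: the authors give no written proof but simply cite Bouc's Theorem~5.4.10 and say the same arguments carry over to the $p'$-normal setting, and your argument is precisely that adaptation, deducing everything from the preceding theorem on $\be_G$ and $\beta(G)$ plus the multiplicativity $m_{G,M}=m_{G,N}\,m_{G/N,M/N}$ obtained from transitivity of deflation. Your attention to the point that $M$ (an extension of the $p'$-group $N$ by the $p'$-group $M/N$) is itself a $p'$-normal subgroup, so that all morphisms stay in $\KK\calC^\Delta$ and the previous theorem's part (iv) applies, is exactly the ``necessary modification'' the paper alludes to.
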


Let $\BD_p$-gr denote the class of $\BD_p$-groups, and let $[\BD_p\text{-gr}]$ be a fixed set of representatives of their isomorphism classes. Define a relation $\gg$ on $\BD_p$-gr by declaring $G \gg H$ if and only if $H$ is isomorphic to a quotient of $G$ by a $p'$-group. A subset $\mathcal{M} \subseteq \BD_p\text{-gr}$ is said to be \emph{closed} if, whenever $H \in \mathcal{M}$ and $G \gg H$, it follows that $G \in \mathcal{M}$. 

The following result describes the lattice of subfunctors of the Burnside functor $\KK B$ in terms of these closed subsets. Its proof is the same as that of Theorem~5.4.14 in \cite{Bouc2010a}, with the necessary modifications to the $p$-bifree bisets.

\begin{theorem}\label{thm subfunctorlattice}
Let $\mathcal{S}$ be the set of of $p$-bifree biset subfunctors of the Burnside functor $\KK B$, ordered by inclusion, and let $\mathcal{T}$ be the set of closed subsets of the set $[\BD_p\text{-gr}]$ of isomorphism classes of $\BD_p$-groups, ordered by inclusion.
Then the assignment
\[
\Theta: F \mapsto \{ H \in [\BD_p\text{-gr}] \mid \be_H \subseteq F \}
\]
defines an isomorphism of posets $\Theta: \mathcal{S} \xrightarrow{\sim} \mathcal{T}$.

Its inverse is given by
\[
\Psi: A \mapsto \sum_{H \in A} \be_H.
\]
\end{theorem}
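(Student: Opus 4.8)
The plan is to check that $\Theta$ and $\Psi$ are inclusion-preserving and mutually inverse; the genuinely representation-theoretic content has already been isolated in the structure theorem for the subfunctors $\be_G$ (the four-part theorem above) and in the Lemma, so what is left is essentially formal. I would begin with well-definedness. If $H\in\Theta(F)$ and $G$ is a $\BD_p$-group with $G\gg H$, then part (i) of the structure theorem gives $\be_G\subseteq\be_H\subseteq F$, so $G\in\Theta(F)$; hence $\Theta(F)$ is closed and $\Theta$ maps $\mathcal S$ into $\mathcal T$. On the other side $\Psi(A)=\sum_{H\in A}\be_H$ is a sum of subfunctors of $\KK B$, hence a subfunctor, so $\Psi$ maps $\mathcal T$ into $\mathcal S$. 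Both maps are visibly monotone.

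Next I would prove $\Theta\circ\Psi=\mathrm{id}_{\mathcal T}$. The inclusion $A\subseteq\Theta(\Psi(A))$ is immediate. For the reverse, take $H\in\Theta(\Psi(A))$, so that $H$ is a $\BD_p$-group with $\be_H\subseteq\sum_{H'\in A}\be_{H'}$. Evaluating at $H$ and using $\be_H(H)=\KK e^H_H$ (part (iii) of the structure theorem), I obtain $e^H_H\in\sum_{H'\in A}\be_{H'}(H)$. Since $\KK B(H)$ is split semisimple and each $\be_{H'}(H)$ is an ideal, this sum is the span of a set of primitive idempotents; the primitive idempotent $e^H_H$ therefore lies in a single summand $\be_{H'}(H)$ with $H'\in A$. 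Consequently the subfunctor generated by $e^H_H$, which is $\be_H$, is contained in $\be_{H'}$, and part (ii) of the structure theorem, applied with the roles of $G$ and $H$ played by $H$ and $H'$, gives $H\gg H'$. Since $A$ is closed, $H\in A$.

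The substantive step is $\Psi\circ\Theta=\mathrm{id}_{\mathcal S}$. Writing $F'=\Psi(\Theta(F))$, the inclusion $F'\subseteq F$ is clear, and I would prove $F\subseteq F'$ by contradiction, choosing a group $G$ of minimal order with $F'(G)\subsetneq F(G)$. By the Lemma both are ideals of the split semisimple algebra $\KK B(G)$, so there is a primitive idempotent with $e^G_K\in F(G)\setminus F'(G)$. If $K$ is a proper subgroup of $G$, then $\Res^G_K e^G_K=e^K_K\in F(K)=F'(K)$ by minimality of $|G|$; since $\Ind^G_K e^K_K$ is a nonzero scalar multiple of $e^G_K$, this forces $e^G_K\in F'(G)$, a contradiction. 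If $K=G$, then $F$ contains $e^G_G$ and hence the subfunctor $\be_G$; by parts (iii) and (iv) of the structure theorem one has $\be_G=\be_{\beta(G)}$ with $\beta(G)$ a $\BD_p$-group, so $\beta(G)\in\Theta(F)$ and therefore $e^G_G\in\be_G(G)=\be_{\beta(G)}(G)\subseteq F'(G)$, again a contradiction. Hence $F=F'$, and $\Theta$ and $\Psi$ are mutually inverse isomorphisms of posets.

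I expect the only delicate points to lie in this last step. First, one must use that every ideal of the semisimple algebra $\KK B(G)$ is the span of a set of primitive idempotents, in order to reduce a strict inclusion of ideals to a single offending $e^G_K$. Second, one should observe that the maps used to transport $e^G_K$, namely $\Res^G_K$, $\Ind^G_K$ and the canonical isomorphisms $\Isom(f)$, are all automatically $p$-bifree; this is exactly the point at which the $p$-bifree hypothesis must be respected, and it guarantees that the argument never leaves $R\calC^\Delta$ and never appeals to inflation or deflation along a subgroup that is not $p'$-normal. Third, the case $K=G$ is precisely where the structure theory of $\beta(G)$ does the real work, replacing $\be_G$ by $\be_{\beta(G)}$ with $\beta(G)$ a $\BD_p$-group and thereby placing $\beta(G)$ in $\Theta(F)$. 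Beyond these points the argument is parallel to the proof of Theorem~5.4.14 in \cite{Bouc2010a}.
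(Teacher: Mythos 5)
Your argument is correct and is essentially the proof the paper intends, namely the argument of Theorem~5.4.14 of Bouc's book transported to the $p$-bifree setting: closedness of $\Theta(F)$ via part (i), $\Theta\circ\Psi=\mathrm{id}$ via the idempotent decomposition of ideals together with part (ii), and $\Psi\circ\Theta=\mathrm{id}$ by reducing a primitive idempotent $e^G_K$ to $e^K_K$ via $\Res^G_K$ and recovering it via $\Ind^G_K$, with $\beta(G)$ handling the case $K=G$. Your minimal-counterexample organization of the last step is a cosmetic variant of Bouc's direct argument (for each $e^G_K\in F(G)$ one gets $\be_K\subseteq F$, hence $\beta(K)\in\Theta(F)$ and $e^G_K\in\be_{\beta(K)}(G)$), and all the delicate points you flag are handled correctly.
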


%\begin{proof}
 %      The proof of Theorem 5.4.14 in \cite{Bouc2010a} works in this case as well.
%\end{proof}

We now describe the composition factors of the Burnside functor $\KK B$ as a $p$-bifree biset functor. For each $\BD_p$-group $G$, the subfunctor $\be_G$ is crucial, and its maximal proper subfunctor determines a unique simple quotient. We refer to Section 5 of \cite{Bouc2010a} for the proofs of the following results up to the end of the section.
\begin{proposition}
\label{prop simplequotient}
{\rm (i)} Let $G$ be a $\BD_p$-group. Then the functor $\be_G$ has a unique maximal subfunctor
\[
\textbf{j}_G = \sum_{\substack{H \in [\BD_p\text{-gr}] \\ H \gg G,\ H \not\cong G}} \be_H,
\]
and the quotient $\be_G / \textbf{j}_G$ is isomorphic to the simple functor $S^\Delta_{G,\KK}$.

\smallskip
{\rm (ii)} Let $F \subset F'$ be subfunctors of $\KK B$ such that $F'/F$ is simple. Then there exists a unique $G \in [\BD_p\text{-gr}]$ such that $\be_G \subseteq F'$ and $\be_G \nsubseteq F$. In particular, one has
\[
\be_G + F = F', \quad \be_G \cap F = \textbf{j}_G, \quad \text{and} \quad F'/F \cong S^\Delta_{G,\KK}.
\]
\end{proposition}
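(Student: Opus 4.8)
The plan is to mirror the classical argument (Corollary 5.4.12 and the surrounding discussion in \cite{Bouc2010a}) in the $p$-bifree setting, using Theorem~\ref{thm subfunctorlattice} as the backbone. For part (i): by Theorem~\ref{thm subfunctorlattice} the subfunctors of $\be_G$ correspond bijectively, via $\Theta$, to the closed subsets of $[\BD_p\text{-gr}]$ contained in $\Theta(\be_G) = \{\, H \in [\BD_p\text{-gr}] \mid \be_H \subseteq \be_G \,\}$. By Theorem~5.4.10(ii)(or rather the $p$-bifree adaptation stated above), $\be_H \subseteq \be_G$ holds for a $\BD_p$-group $H$ precisely when $G \gg H$, so $\Theta(\be_G) = \{\, H \mid G \gg H \,\}$. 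This closed set has a unique largest proper closed subset, namely $\Theta(\be_G) \setminus \{G\} = \{\, H \mid G \gg H,\ H \not\cong G \,\}$ (it is closed because $G$ is maximal in $\Theta(\be_G)$ under $\gg$, using that $\gg$ is a partial order on $[\BD_p\text{-gr}]$ — antisymmetry being the fact that two $\BD_p$-groups each a $p'$-quotient of the other are isomorphic, which follows from the order comparison). Applying $\Psi$ gives that $\textbf{j}_G = \sum_{H \gg G,\, H \not\cong G} \be_H$ is the unique maximal subfunctor of $\be_G$, so $\be_G/\textbf{j}_G$ is simple.

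To identify this simple quotient as $S^\Delta_{G,\KK}$, I would evaluate at the minimal group. By Theorem (the one preceding Definition~\ref{def BDelta-group}), part (iii), $G$ is a $\BD_p$-group with $\be_G(G) = \KK e^G_G$, and $G$ is the minimal group of $\be_G$. For any $H \gg G$ with $H \not\cong G$ we have $|H| > |G|$ unless $H \cong G$ — wait, $H \gg G$ means $G$ is a $p'$-quotient of $H$, so $|H| \geq |G|$ with equality iff $H \cong G$; hence every such $H$ has $|H| > |G|$ and, $G$ being the minimal group of each $\be_H$ only when $H \gg \cdots$, one checks $\be_H(G) = 0$ since $\beta(H) = H \not\cong G$ and $\be_H(G) \neq 0$ would force $G$ to be a $p'$-quotient of a section of $G$ with $p'$-denominator equal to $H$, impossible for order reasons. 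Thus $\textbf{j}_G(G) = 0$, so $(\be_G/\textbf{j}_G)(G) = \KK e^G_G$ is $1$-dimensional, and the minimal group of the simple functor $\be_G/\textbf{j}_G$ is $G$. The essential algebra $\calE^\Delta(G) \cong \KK\Out(G)$ acts on this $1$-dimensional space; since deflations $\Def^G_{G/N}$ with $N \neq 1$ a $p'$-group kill $e^G_G$ up to the scalar $m_{G,N}$, which is zero because $G$ is a $\BD_p$-group, the action factors appropriately and $\Out(G)$ acts through the action on $e^G_G$, which is trivial (as $e^G_G$ is $\Aut(G)$-fixed). Hence the $\KK\Out(G)$-module attached to the minimal group is the trivial module $\KK$, and $\be_G/\textbf{j}_G \cong S^\Delta_{G,\KK}$.

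For part (ii): given $F \subsetneq F'$ subfunctors of $\KK B$ with $F'/F$ simple, apply $\Theta$ to get closed sets $\Theta(F) \subsetneq \Theta(F')$ whose quotient (in the lattice of closed sets) is an atom above $\Theta(F)$ inside $\Theta(F')$; since $\Theta$ is a poset isomorphism, $\Theta(F') \setminus \Theta(F)$ must be a single $\gg$-maximal element $G$ of $\Theta(F')$ not in $\Theta(F)$ — I would argue this by noting that if two incomparable or a chain of $\geq 2$ new elements appeared, one could interpolate a proper intermediate closed set, contradicting simplicity of $F'/F$. Uniqueness of $G$ is then immediate. With this $G$: $\be_G \subseteq F'$ and $\be_G \not\subseteq F$ by definition of $\Theta$; then $\be_G + F$ corresponds to the closed set generated by $\Theta(F) \cup \{H : G \gg H\} = \Theta(F')$, so $\be_G + F = F'$; and $\be_G \cap F$ corresponds to $\Theta(F) \cap \{H : G\gg H\} = \{H : G \gg H,\ H\not\cong G\} = \Theta(\textbf{j}_G)$ (the inclusion $\subseteq$ uses $G \notin \Theta(F)$; $\supseteq$ uses $\Theta(\textbf{j}_G) \subseteq \Theta(F)$, which holds since $\be_G \cap F \supseteq \textbf{j}_G \cap F$... more carefully: every $H$ with $G \gg H$, $H \not\cong G$ satisfies $\be_H \subseteq \be_G$; and $\be_H \subseteq F$ because otherwise $H$ would be another new element, contradicting that $G$ is the unique one). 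Thus $\be_G \cap F = \textbf{j}_G$, and by the second isomorphism theorem $F'/F = (\be_G + F)/F \cong \be_G/(\be_G \cap F) = \be_G/\textbf{j}_G \cong S^\Delta_{G,\KK}$ by part (i).

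The main obstacle I anticipate is the bookkeeping in part (ii) showing that exactly one new $\BD_p$-group appears and that $\be_G \cap F$ equals $\textbf{j}_G$ on the nose rather than merely containing it — this requires carefully translating the lattice-theoretic atom condition through $\Theta$ and controlling which $\be_H$ land inside $F$. The identification of the $\KK\Out(G)$-module as trivial in part (i) is the other point needing care: one must verify that the full essential algebra action on $\be_G(G)/\textbf{j}_G(G)$, not just the outer automorphisms, reduces to the trivial character, which is where the $\BD_p$-group hypothesis $m_{G,N} = 0$ is essential.
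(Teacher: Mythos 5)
Your argument is correct and follows exactly the route the paper intends: the paper gives no proof of its own but defers to Bouc's Chapter~5, and your reconstruction via the poset isomorphism of Theorem~\ref{thm subfunctorlattice} (unique maximal closed subset $\Theta(\be_G)\setminus\{G\}$ for (i), singleton difference $\Theta(F')\setminus\Theta(F)$ plus the second isomorphism theorem for (ii)) is that argument. One slip to fix: with the paper's convention that $G\gg H$ means $H$ is a $p'$-quotient of $G$, the theorem gives $\be_H\subseteq\be_G$ precisely when $H\gg G$ (not $G\gg H$ as you write), so $\Theta(\be_G)=\{H\mid H\gg G\}$; you transpose the relation consistently in the prose but your displayed formula for $\textbf{j}_G$ uses the correct direction, so the logic survives once the notation is made uniform.
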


\begin{corollary}
\label{cor compfactorBurnside}
The composition factors of $\KK B$ as a $p$-bifree biset functor are precisely the simple functors $S^\Delta_{G,\KK}$, where $G$ runs over the set $[\BD_p\text{-gr}]$.
\end{corollary}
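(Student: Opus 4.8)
The plan is to deduce Corollary~\ref{cor compfactorBurnside} from Theorem~\ref{thm subfunctorlattice} together with Proposition~\ref{prop simplequotient}, exactly as in the classical setting of \cite[Chapter~5]{Bouc2010a}. The composition factors of $\KK B$ are, by definition, the simple subquotients $F'/F$ arising from pairs of subfunctors $F \subset F'$ of $\KK B$. So the statement amounts to two inclusions: every $S^\Delta_{G,\KK}$ with $G$ a $\BD_p$-group actually occurs as such a subquotient, and no other simple functor does.

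For the first inclusion, I would fix a $\BD_p$-group $G$ and invoke Proposition~\ref{prop simplequotient}(i): the subfunctor $\be_G$ has unique maximal subfunctor $\textbf{j}_G$, and $\be_G/\textbf{j}_G \cong S^\Delta_{G,\KK}$. Taking $F' = \be_G$ and $F = \textbf{j}_G$ exhibits $S^\Delta_{G,\KK}$ as a composition factor of $\KK B$. (Strictly, one should note that $\be_G$ is nonzero, which follows from Theorem~4.7(iii), since $\be_G(\beta(G)) = \KK e^{\beta(G)}_{\beta(G)} \neq 0$; and that $\textbf{j}_G \neq \be_G$, which is part of Proposition~\ref{prop simplequotient}(i).) For the converse, suppose $F \subset F'$ are subfunctors of $\KK B$ with $F'/F$ simple. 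Proposition~\ref{prop simplequotient}(ii) produces a unique $G \in [\BD_p\text{-gr}]$ with $\be_G \subseteq F'$, $\be_G \nsubseteq F$, and $F'/F \cong S^\Delta_{G,\KK}$. Hence every composition factor is of the required form. Since any composition series of $\KK B$ (it has finite length on each finite group, as $\KK B(G)$ is finite-dimensional and only finitely many groups of each order occur) is built from such pairs, the list $\{S^\Delta_{G,\KK} : G \in [\BD_p\text{-gr}]\}$ is exactly the set of composition factors, and moreover distinct $\BD_p$-groups give distinct (non-isomorphic) factors.

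I do not expect a genuine obstacle here: the real content has already been placed in Proposition~\ref{prop simplequotient} and Theorem~\ref{thm subfunctorlattice}, whose proofs are the $p$-bifree adaptations of \cite[Theorems~5.4.14, 5.4.11]{Bouc2010a}. The only point requiring a word of care is the finiteness/existence of a composition series, so that "composition factors" is well-defined: one checks that $\KK B$, restricted to groups of bounded order, is a finite-length object of $\calF^\Delta_\KK$, which follows because $\dim_\KK \KK B(G) < \infty$ and the poset of subfunctors is, by Theorem~\ref{thm subfunctorlattice}, isomorphic to the poset of closed subsets of $[\BD_p\text{-gr}]$, each chain of which has finite length below any fixed cardinality bound. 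With that in hand the corollary is immediate.
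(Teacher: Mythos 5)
Your proposal is correct and follows exactly the route the paper intends: the corollary is read off directly from Proposition~\ref{prop simplequotient}, with part (i) exhibiting each $S^\Delta_{G,\KK}$ ($G$ a $\BD_p$-group) as the simple quotient $\be_G/\mathbf{j}_G$ and part (ii) showing every simple subquotient of $\KK B$ arises this way. Your added remark on why ``composition factor'' is well-defined for the infinite-length functor $\KK B$ is a sensible precaution that the paper leaves implicit.
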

To compute the evaluation of the simple functor $S^\Delta_{G,\KK}$ at a finite group $H$, we first describe the structure of $\be_G(H)$ in terms of the subgroup lattice of $H$.
\begin{proposition}
\label{prop basisofbeGH}
Let $G$ and $H$ be finite groups. Then the evaluation $\be_G(H)$ is the subspace of $\KK B(H)$ spanned by the idempotents $e^H_K$, where $K$ runs over a set of representatives of conjugacy classes of subgroups $K \leq H$ satisfying $K \gg \beta(G)$.
\end{proposition}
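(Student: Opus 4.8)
The plan is to adapt the proof of the corresponding statement for classical biset functors (\cite[Section~5]{Bouc2010a}) to the $p$-bifree setting, replacing ``normal subgroup'' by ``$p'$-normal subgroup'' throughout.

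First I would reduce to the case $\beta(G)=G$: by the results recalled above one has $\be_G=\be_{\beta(G)}$, the minimal group $\beta(G)$ is a $\BD_p$-group, and $\beta(\beta(G))=\beta(G)$, so after replacing $G$ by $\beta(G)$ we may assume that $G$ is a $\BD_p$-group with $\beta(G)=G$; the condition $K\gg\beta(G)$ is unchanged. Since $\be_G$ is the subfunctor of $\KK B$ generated by $e^G_G\in\KK B(G)$, we have $\be_G(H)=\sum_{\psi}\KK\,\psi(e^G_G)$, the sum over $\psi\in\Hom_{\calC^\Delta}(G,H)=\KK\BD(H,G)$, and since transitive $p$-bifree bisets span $\KK\BD(H,G)$ it suffices to treat a transitive $\psi=\Ind^H_Y\circ\Inf^Y_{Y/X}\circ\Isom(f)\circ\Def^T_{T/S}\circ\Res^G_T$ with $Y\le H$, $T\le G$, $X\unlhd Y$ and $S\unlhd T$ both $p'$-groups, and $f:T/S\xrightarrow{\sim}Y/X$.

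For the inclusion $\be_G(H)\subseteq\langle e^H_K\mid K\gg G\rangle$ I would evaluate $\psi(e^G_G)$ term by term along this factorization. The restriction of $e^G_G$ to a proper subgroup vanishes, forcing $T=G$; then $\Def^G_{G/S}(e^G_G)=m_{G,S}\,e^{G/S}_{G/S}$, which is $0$ when $S\neq 1$ because $G$ is a $\BD_p$-group, so we may assume $S=1$ and $\Isom(f)$ carries $e^G_G$ to $e^{Y/X}_{Y/X}$ with $Y/X\cong G$. The one nontrivial ingredient is the inflation-of-idempotents formula
\[
\Inf^Y_{Y/X}\bigl(e^{Y/X}_{Y/X}\bigr)=\sum_{M}e^Y_M ,
\]
the sum running over conjugacy classes of subgroups $M\le Y$ with $MX=Y$, which I would prove by comparing marks via $\phi^Y_M\circ\Inf^Y_{Y/X}=\phi^{Y/X}_{MX/X}$. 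Each such $M$ satisfies $M/(M\cap X)\cong Y/X\cong G$ with $M\cap X$ a $p'$-normal subgroup of $M$, so $M\gg G$; combined with $\Ind^H_Y(e^Y_M)=\tfrac{|N_H(M)|}{|N_Y(M)|}\,e^H_M$ this shows $\psi(e^G_G)$ lies in the asserted span.

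For the reverse inclusion I would show $e^H_K\in\be_G(H)$ for every $K\le H$ with $K\gg G$ by strong induction on $|K|$: choosing a $p'$-normal $N\unlhd K$ with $K/N\cong G$ and setting $\psi=\Ind^H_K\circ\Inf^K_{K/N}\circ\Isom(f)$ for an isomorphism $f:G\xrightarrow{\sim}K/N$, the computation above gives
\[
\psi(e^G_G)=\tfrac{|N_H(K)|}{|K|}\,e^H_K+\sum_{\substack{M<K\\ MN=K}}\tfrac{|N_H(M)|}{|N_K(M)|}\,e^H_M ,
\]
where every $M$ in the sum has $M\gg G$ and $|M|<|K|$ (the sum being empty for $K$ of minimal order among subgroups $\gg G$). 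By the inductive hypothesis $e^H_M\in\be_G(H)$ for each such $M$, and since $\psi(e^G_G)\in\be_G(H)$ with nonzero coefficient on $e^H_K$, we conclude $e^H_K\in\be_G(H)$. The argument is a transcription of Bouc's, so the only delicate points are the mark computation behind the inflation formula and the observation that the $p'$-normal condition passes to the section $M\cap N\unlhd M$ (so that all $M$ occurring really satisfy $M\gg G$); the $\BD_p$-group hypothesis on $\beta(G)$ enters precisely to kill the deflation terms with $S\neq 1$.
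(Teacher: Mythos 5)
Your proof is correct and follows exactly the route the paper intends: the paper gives no argument of its own for this proposition but defers to Section~5 of Bouc's book, and your write-up is a faithful transcription of that argument (restriction kills $T<G$, the $\BD_p$-hypothesis on $\beta(G)$ kills the deflation terms with $S\neq 1$, the mark computation gives $\Inf^Y_{Y/X}e^{Y/X}_{Y/X}=\sum_{MX=Y}e^Y_M$, and induction on $|K|$ gives the reverse inclusion) with the correct $p'$-modifications in place. The only points needing care — that $S$ and $M\cap X$ are automatically $p'$-groups so that the bisets used are $p$-bifree and the sections witness $M\gg G$ — are exactly the ones you flag and handle correctly.
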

This description allows one to compute the dimension of $S^\Delta_{G,\KK}(H)$ explicitly in terms of the minimal groups $\beta(K)$ associated to the subgroups $K \leq H$.
\begin{theorem}
\label{thm dimensionofsimples}
Let $G$ be a $\BD_p$-group and $H$ a finite group. Then the $\KK$-dimension of $S^\Delta_{G,\KK}(H)$ is equal to the number of conjugacy classes of subgroups $K \leq H$ such that $\beta(K) \cong G$.
\end{theorem}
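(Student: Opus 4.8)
The strategy is to reduce the dimension statement to a simple counting argument using Proposition~\ref{prop basisofbeGH} together with the identification $\be_G/\textbf{j}_G \cong S^\Delta_{G,\KK}$ from Proposition~\ref{prop simplequotient}(i). The key point is that the simple functor $S^\Delta_{G,\KK}$ is a subquotient of $\KK B$, so its evaluation at $H$ is a subquotient of $\KK B(H)$, and all the spaces involved are spanned by subsets of the idempotent basis $\{e^H_K\}_{[K_H]}$ of $\KK B(H)$.

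First I would record that, by Proposition~\ref{prop basisofbeGH} applied to $G$ itself, $\be_G(H)$ has as a $\KK$-basis the idempotents $e^H_K$ with $K \gg \beta(G)$, i.e.\ those $K \le H$ (up to conjugacy) for which $\beta(G)$ is a quotient of $K$ by a $p'$-subgroup; equivalently, by Theorem~\ref{thm subfunctorlattice}(i)/Theorem~\ref{thm ...}(ii), those $K$ with $\beta(K) \gg \beta(G)$. Next, since $\textbf{j}_G = \sum_{H' \gg G,\ H' \not\cong G} \be_{H'}$ and each $\be_{H'}(H)$ is again spanned by a subset of the idempotent basis, $\textbf{j}_G(H)$ is spanned by those $e^H_K$ with $K \gg \beta(H')$ for some $\BD_p$-group $H'$ with $G \gg H'$ properly — equivalently by those $e^H_K$ with $\beta(K) \gg G$ but $\beta(K) \not\cong G$ (using that $\beta(G) = G$ for a $\BD_p$-group, by Theorem~\ref{thm ...}(iv), and that $\gg$ restricted to $\BD_p$-groups is a partial order). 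Therefore the quotient $S^\Delta_{G,\KK}(H) = \be_G(H)/\textbf{j}_G(H)$ has as a basis the images of the idempotents $e^H_K$ for which $\beta(K) \gg G$ but $\beta(K) \not\cong G$ is \emph{false} — that is, for which $\beta(K) \gg G$ and $\beta(K) \cong G$, i.e.\ exactly $\beta(K) \cong G$.

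The remaining task is to verify that these images are genuinely linearly independent in the quotient, i.e.\ that no idempotent $e^H_K$ with $\beta(K) \cong G$ lies in $\textbf{j}_G(H)$. This follows because $\textbf{j}_G(H)$ is spanned by idempotents $e^H_L$ with $\beta(L) \gg G$, $\beta(L) \not\cong G$, and the $e^H_K$ are $\KK$-linearly independent (they form a basis of $\KK B(H)$, a split semisimple algebra), so a set of them can lie in the span of a disjoint set of them only if it is empty. Counting the surviving basis elements gives precisely the number of conjugacy classes of subgroups $K \le H$ with $\beta(K) \cong G$.

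The one place that needs care — and the main obstacle — is matching the two descriptions of $\textbf{j}_G(H)$: writing it as $\sum_{H' \gg G, H'\not\cong G}\be_{H'}(H)$ and recognizing that the union of the corresponding idempotent sets $\{K : \beta(K)\gg \beta(H') = H'\}$ over all such $H'$ is exactly $\{K : \beta(K) \gg G,\ \beta(K)\not\cong G\}$. The inclusion $\supseteq$ requires that if $\beta(K) \gg G$ properly then $\beta(K) \gg H'$ for some $\BD_p$-group $H'$ with $G \gg H'$, $H' \not\cong G$ — one takes $H' = \beta(K)$ itself when $\beta(K)$ already sits strictly above $G$, but one must rule out the degenerate possibility $\beta(K) \cong G$ hiding in this family, which is exactly the condition $H' \not\cong G$ that has been imposed; the inclusion $\subseteq$ is transitivity of $\gg$. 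Once this set-theoretic bookkeeping is done, the dimension count is immediate, so I expect the write-up to be short, with the subtlety concentrated entirely in the $\gg$-poset manipulation on $\BD_p$-groups.
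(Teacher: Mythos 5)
Your proposal is correct and is exactly the argument the paper intends (it defers the proof to Chapter 5 of Bouc's book, where this idempotent-counting via $\be_G(H)/\mathbf{j}_G(H)$ and the $\gg$-poset bookkeeping on $\beta(K)$ is carried out verbatim in the classical setting). The one point you flag as needing care — that the union of the sets $\{K : \beta(K)\gg H'\}$ over $\BD_p$-groups $H'\gg G$, $H'\not\cong G$ equals $\{K:\beta(K)\gg G,\ \beta(K)\not\cong G\}$ — indeed goes through, since $\beta(K)\gg H'\gg G$ with $\beta(K)\cong G$ would force $|H'|=|G|$ and hence $H'\cong G$.
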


\section{The functor $\KK\BD$}
In this section, we study the representable functor  $\KK B^\Delta(-,1)$, which assigns to each finite group $G$ the  $\KK$-vector space $\KK B^\Delta(G,1)$. This functor is a $p$-bifree analogue of the classical representable biset functor $\KK B(-,1)$, and its structure can be analyzed using the same tools developed for the Burnside functor. In particular, we show that its composition factors are parametrized by $B$-groups of $p' $-order, and we describe the corresponding simple functors explicitly. We introduce this functor as a Grothendieck group as follows.

Let $R$ be a commutative ring with identity and let $\KK$ be a field of characteristic zero. Let $G$ be a finite group. A left $G$-set is said to be \textit{left $p$-free}, if it is a disjoint union of transitive $G$-sets with $p'$-stabilizers. Let $\lset{G}^\Delta$ denote the category of left $p$-free $G$-sets. One can similarly define the category $\lset{}_G^\Delta$ of right $p$-free $G$-sets. Identifying a $G$-set $X$ with a $(G,1)$-biset induces an isomorphism $\lset{G}^\Delta\cong \lset{G}_1^\Delta$. 

Let $B^\Delta(G)$ denote the Grothendieck group of the category $\lset{G}^\Delta$. Note that
\begin{align*}
    \BD(G)=\bigoplus_{H\in [s_G]_{p'}} \ZZ[G/H]\,,
\end{align*}
where $[s_G]_{p'}$ denotes a set of representatives of the $G$-conjugacy classes of $p'$-subgroups of $G$.  

\begin{Remark}
    Let $H$ and $K$ be subgroups of $G$. Then
    \begin{align*}
        |(G/K)^H|=|\{x\in G/K \,|\, H^x\subseteq K\}|\,.
    \end{align*}
    In particular, if $K$ is a $p'$-group, i.e., if $[G/K]\in \BD(G)$ and if $H$ is not a $p'$-subgroup, then $|(G/K)^H|=0$. This means that the mark morphism
    \begin{align*}
        \phi: B(G)\to \prod_{H\in [s_G]} \ZZ, \quad [X]\mapsto (|X^H|)_{H\in [s_G]}
    \end{align*}
    restricts to a map
    \begin{align*}
        \phi: \BD(G)\to \prod_{H\in [s_G]_{p'}} \ZZ, \quad [X]\mapsto (|X^H|)_{H\in [s_G]_{p'}}
    \end{align*}
    which we denote again by $\phi$. It follows that the primitive idempotents of $\KK\BD(G)$ are precisely the idempotents $e^G_H\in \KK B(G)$ where $H$ is a $p'$-subgroup of $G$. Setting
    \begin{align*}
        e^G_{p'}:=\sum_{H\in [s_G]_{p'}} e^G_H
    \end{align*}
    we have
    \begin{align*}
        \KK \BD(G)= e_{p'}^G \KK B(G)=\KK B(G)e_{p'}^G=e_{p'}^G\KK B(G)e_{p'}^G\,.
    \end{align*}
\end{Remark}

\begin{Remark}
    Note that the commutative ring $\KK \BD(G)$ does not have an identity element unless $G$ is a $p'$-subgroup.
\end{Remark}

We consider the functor $R\BD:R\calC^\Delta\to \lMod{R}$ defined as
\begin{itemize}
\item $G\mapsto R\BD(G)$.
\item $X\in R\BD(H,G) \mapsto \left(RB(X):R\BD(G)\to R\BD(H),\quad U\mapsto X\cdot_G U\right)$.
\end{itemize}

Note that $R\BD$ is isomorphic to the representable functor $R\BD(-,1)$ at $1$. Also, it is a $p$-bifree subfunctor of $RB$. 

By Theorem~\ref{thm subfunctorlattice}, the poset of the subfunctors of $\KK \BD$ is isomorphic to the poset of closed subsets of 
\begin{align*}
    \Theta(\KK \BD)=\{ H\in[B^\Delta_p\text{-gr}]\,|\, \be_H\subseteq \KK \BD\}\,.
\end{align*}
This observation allows us to identify the minimal groups contributing to the functor  $\KK B^\Delta$. The next result shows that they are exactly the $B$-groups of $p'$-order, in the sense of Bouc’s theory. As a consequence, we obtain the following classification of its composition factors.

\begin{lemma}
    Let $H$ be a $\BD$-group. Then $\be_H\subseteq \KK \BD$ if and only if $H$ is a $p'$-group.
\end{lemma}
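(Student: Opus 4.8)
The plan is to combine the general subfunctor theory for the Burnside functor (Theorem~\ref{thm subfunctorlattice} and Proposition~\ref{prop basisofbeGH}) with the explicit description $\KK\BD(G)=e^G_{p'}\KK B(G)$ of the evaluations. For the ``if'' direction, suppose $H$ is a $\BD_p$-group which is also a $p'$-group. By Remark~(iv) following Definition~\ref{def BDelta-group}, a $p'$-group is a $\BD_p$-group if and only if it is a $B$-group, so $\beta(H)\cong H$. By Proposition~\ref{prop basisofbeGH}, for any finite group $K$ the evaluation $\be_H(K)$ is spanned by those idempotents $e^K_L$ with $L\gg\beta(H)=H$, that is, with $H$ a quotient of $L$ by a $p'$-subgroup. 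Since $H$ is a $p'$-group, any such $L$ is itself a $p'$-group (an extension of a $p'$-group by a $p'$-group). Hence every $e^K_L$ occurring in $\be_H(K)$ has $L$ a $p'$-subgroup of $K$, so $e^K_L\in e^K_{p'}\KK B(K)=\KK\BD(K)$. Therefore $\be_H(K)\subseteq\KK\BD(K)$ for all $K$, i.e.\ $\be_H\subseteq\KK\BD$.

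For the ``only if'' direction, suppose $H$ is a $\BD_p$-group with $\be_H\subseteq\KK\BD$. Evaluating at $H$ itself and using Theorem (iii) (the part asserting $\be_H(H)=\KK e^H_H$ for $\BD_p$-groups $H$) together with $\be_H(H)\subseteq\KK\BD(H)=e^H_{p'}\KK B(H)$, we get $e^H_H\in e^H_{p'}\KK B(H)$. Since the $e^H_L$ for $L$ running over conjugacy classes of subgroups of $H$ form a basis of orthogonal idempotents of $\KK B(H)$, and $e^H_{p'}\KK B(H)$ is the span of those $e^H_L$ with $L$ a $p'$-subgroup, membership of $e^H_H$ in this span forces $H$ itself to be a $p'$-subgroup of $H$, i.e.\ $H$ is a $p'$-group. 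This completes both directions.

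The only subtlety is making sure the containment $\be_H(H)\subseteq\KK\BD(H)$ is genuinely available: this is immediate from $\be_H\subseteq\KK\BD$ as subfunctors of $\KK B$, evaluated at $G=H$. I expect the main point requiring a moment's care is the ``if'' direction's closure observation---that $L\gg H$ with $H$ a $p'$-group forces $L$ to be a $p'$-group---which is just the fact that $\gg$ is given by quotients by $p'$-normal subgroups, so $|L|=|N|\cdot|H|$ with $N$ and $H$ both of $p'$-order. Everything else is a direct application of the structural results already established for $\KK B$ in the $p$-bifree setting; no new calculation is needed. As a byproduct one recovers $\Theta(\KK\BD)=\{H\in[\BD_p\text{-gr}]\mid H\text{ is a }p'\text{-group}\}$, which is exactly the set of $B$-groups of $p'$-order, since for $p'$-groups the notions of $\BD_p$-group and $B$-group coincide; this is what feeds into Corollary~\ref{cor compfactorsrepresentable}.
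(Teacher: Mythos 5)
Your proof is correct. The ``only if'' direction coincides with the paper's: $e^H_H\in\be_H(H)\subseteq\KK\BD(H)=e^H_{p'}\KK B(H)$ forces $H$ to be a $p'$-group, by the description of the primitive idempotents of $\KK\BD(H)$. For the ``if'' direction you take a heavier route than the paper. The paper simply observes that when $H$ is a $p'$-group one has $e^H_H\in\KK\BD(H)$, and since $\be_H$ is by definition the subfunctor of $\KK B$ generated by $e^H_H$ while $\KK\BD$ is a subfunctor of $\KK B$ containing that generator, the inclusion $\be_H\subseteq\KK\BD$ is immediate from minimality. You instead invoke Proposition~\ref{prop basisofbeGH} to write out every evaluation $\be_H(K)$ as the span of the $e^K_L$ with $L\gg\beta(H)\cong H$, and then use the closure observation that $L\gg H$ with $H$ of $p'$-order forces $L$ to be a $p'$-group. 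This is valid (and your identification $\beta(H)\cong H$ holds because $H$ is assumed to be a $\BD_p$-group, so all deflation numbers $m_{H,N}$ for nontrivial $p'$-normal $N$ vanish --- the detour through the $B$-group remark is unnecessary), and it has the small benefit of exhibiting the evaluations of $\be_H$ inside $\KK\BD$ explicitly; but the generation argument gets the containment in one line without Proposition~\ref{prop basisofbeGH}.
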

\begin{proof}
    Suppose $\be_H\subseteq \KK \BD$. Then $e^H_H\in \be(H)\subseteq \KK \BD(H)$. By the remark above, it follows that $H$ is a $p'$-group. Conversely, if $H$ is a $p'$-group, then $e^H_H\in \KK \BD(H)$ and hence $\be_H\subseteq \KK \BD$.  
\end{proof}

\begin{corollary}\label{cor compfactorsrepresentable}
    The composition factors of the $p$-bifree biset functor $\KK\BD$ are the functors $S^\Delta_{G,\KK}$ where $G$ is a $B$-group of $p'$-order.
\end{corollary}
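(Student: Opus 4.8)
The plan is to combine the subfunctor lattice from Theorem~\ref{thm subfunctorlattice}, the description of composition factors of $\KK B$ from Corollary~\ref{cor compfactorBurnside} and Proposition~\ref{prop simplequotient}, and the lemma just proved identifying $\Theta(\KK\BD)$. Recall that $\KK\BD$ is a $p$-bifree biset subfunctor of $\KK B$, so by Theorem~\ref{thm subfunctorlattice} it corresponds to the closed subset $\Theta(\KK\BD) = \{H \in [\BD_p\text{-gr}] \mid \be_H \subseteq \KK\BD\}$. The preceding lemma shows this set consists exactly of those $\BD_p$-groups $H$ which are $p'$-groups, and by Remark~(iv) a $p'$-group is a $\BD_p$-group if and only if it is a $B$-group; hence $\Theta(\KK\BD)$ is precisely the set of $B$-groups of $p'$-order.

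First I would invoke Proposition~\ref{prop simplequotient}(ii): any composition factor of $\KK\BD$ arises from a pair of subfunctors $F \subset F' \subseteq \KK\BD$ with $F'/F$ simple, and such a quotient is isomorphic to $S^\Delta_{G,\KK}$ for the unique $G \in [\BD_p\text{-gr}]$ with $\be_G \subseteq F'$ and $\be_G \not\subseteq F$. Since $F' \subseteq \KK\BD$, we have $\be_G \subseteq \KK\BD$, i.e.\ $G \in \Theta(\KK\BD)$, which by the lemma means $G$ is a $B$-group of $p'$-order. This shows every composition factor of $\KK\BD$ is of the form $S^\Delta_{G,\KK}$ with $G$ a $B$-group of $p'$-order.

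Conversely, I would show each such $S^\Delta_{G,\KK}$ does occur. Given a $B$-group $G$ of $p'$-order, the lemma gives $\be_G \subseteq \KK\BD$. Inside $\KK\BD$ consider the subfunctor $\textbf{j}_G = \sum_{H \gg G,\, H \not\cong G} \be_H$ of Proposition~\ref{prop simplequotient}(i), intersected with $\KK\BD$ — but since each such $\be_H$ with $H \gg G$ is contained in $\be_G \subseteq \KK\BD$ by Theorem on subfunctor inclusions (part (i)), in fact $\textbf{j}_G \subseteq \KK\BD$ already. Then $\be_G/\textbf{j}_G \cong S^\Delta_{G,\KK}$ is a subquotient of $\KK\BD$, hence $S^\Delta_{G,\KK}$ appears as a composition factor. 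Combining the two directions yields the claim.

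The main obstacle, such as it is, is purely bookkeeping: making sure the correspondence of Theorem~\ref{thm subfunctorlattice} is applied to $\KK\BD$ correctly (it is a subfunctor of $\KK B$, which is the hypothesis of that theorem) and that the ``unique $G$'' extracted in Proposition~\ref{prop simplequotient}(ii) indeed lands in $\Theta(\KK\BD)$ rather than merely in $[\BD_p\text{-gr}]$ — this is immediate from $\be_G \subseteq F' \subseteq \KK\BD$. There is no genuine difficulty beyond citing the already-established machinery; the work was done in proving the lemma and in the structural results for $\KK B$.
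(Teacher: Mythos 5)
Your proposal is correct and follows exactly the route the paper intends: the subfunctor lattice of Theorem~\ref{thm subfunctorlattice} restricted to $\KK\BD$, the lemma identifying $\Theta(\KK\BD)$ as the $\BD_p$-groups of $p'$-order (equivalently, $B$-groups of $p'$-order by the Remark), and Proposition~\ref{prop simplequotient} to read off the composition factors in both directions. The paper leaves this chain implicit, and your write-up is a faithful, correctly justified expansion of it.
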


\section{The simple functor $S_{1,\KK}^\Delta$}
We now turn to the simple functor $S^\Delta_{1,\KK}$, corresponding to the trivial group and the trivial module. This functor plays a foundational role in the theory, as it appears as a composition factor in both the Burnside and character functors. In this section, we compute the dimension of $S^\Delta_{1,\KK}(G)$ for an arbitrary finite group $G$, and show that it is governed by the number of conjugacy classes of cyclic $p'$-subgroups of $G$.

Let $R$ be a commutative ring with identity and let $\KK$ be a field of characteristic zero. Let $(G,V)$ be a pair of a finite group $G$ and a simple $R\Out(G)$-module $V$. We first describe the simple functor $S_{G,V}$ in more detail.

Let $E_G:=R\BD(G,G)$ denote the endomorphism algebra of $G$ in the category $R\calC^\Delta$. Then $\calE^\Delta(G)=E_G/I_G\cong R\Out(G)$. We consider $V$ as a simple $E_G$-module and define the functor $L_{G,V}$ by
\begin{align*}
    L_{G,V}(H)=R\BD(H,G)\otimes_{E_G}V\,.
\end{align*}
Then by \cite{Bouc1996}, $L_{G,V}$ has a unique maximal subfunctor $J_{G,V}$ whose evaluation at a finite group $H$ is given by
\begin{align*}
    J_{G,V}(H)=\Bigl\{\sum_i x_i\otimes v_i\in L_{G,V}(H)\,|\, \forall y\in R\BD(G,H): \sum_i (y\cdot_H x_i)(v_i)=0\Bigr\}\,.
\end{align*}
The simple functor $S_{G,V}^\Delta$ is defined as the quotient $L_{G,V}/J_{G,V}$. 

We compute the $\KK$-dimension of $S_{1,\KK}^\Delta(G)$ inspired by the proof of \cite[Proposition~8]{Bouc1996}. Note that the functor $L_{1,\KK}$ is isomorphic to the functor $\KK\BD$. Moreover for any finite group $G$, identifying $L_{1,\KK}(G)$ with $\KK\BD(G)$ one has
\begin{align*}
    J_{1,\KK}(G)=\{X\in\KK\BD(G)\, |\, \forall Y\in\KK\BD(G): |G\setminus (Y\times X)|=0\}\,.
\end{align*}

This implies that the dimension of $S^\Delta_{1,\KK}(G)$ is equal to the rank of the bilinear form
\begin{align*}
    \langle -,-\rangle: \KK\BD(G)\times\KK\BD(G)\to \KK,\quad (X,Y)\mapsto |G\setminus (Y\times X)|\,.
\end{align*}
The set of primitive idempotents $\{e^G_H\}_{H\in [s_G]_{p'}}$ form an orthogonal basis with respect to this bilinear form. Moreover, for $H\in [s_G]_{p'}$, one has
\begin{align*}
    \langle e^G_H,e^G_H\rangle &=|G\setminus e^G_H|=\frac{1}{|N_G(H)|}\sum_{K\le H}|K|\mu(K,H)\\&
    =\frac{1}{|N_G(H)|} \sum_{x\in H}\sum_{\langle x\rangle\le K\le H}\mu(K,H)=\frac{\phi_1(H)}{|N_G(H)|}
\end{align*}
where $\phi_1(H)$ is the number of elements $x\in H$ such that $\langle x\rangle=H$. This is non-zero if and only if $H$ is cyclic. This proves the following.

\begin{theorem}\label{thm dimensionoftrivialsimple}
    Let $G$ be a finite group. The $\KK$-dimension of $S^\Delta_{1,\KK}(G)$ is equal to the number of conjugacy classes of cyclic $p'$-subgroups of $G$.
\end{theorem}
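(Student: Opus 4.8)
The plan is to reduce the computation of $\dim_{\KK} S^\Delta_{1,\KK}(G)$ to the rank of an explicit bilinear form on $\KK\BD(G)$, following the strategy of \cite[Proposition~8]{Bouc1996}. First I would identify $L_{1,\KK}$ with $\KK\BD$: since $\Out(1)$ is trivial and $E_1 = \KK\BD(1,1) \cong \KK$, the functor $L_{1,\KK}(H) = \KK\BD(H,1)\otimes_{\KK}\KK \cong \KK\BD(H)$, using the identification $\lset{H}^\Delta \cong \lset{H}_1^\Delta$ already noted in the excerpt. Under this identification, the formula for the maximal subfunctor $J_{1,\KK}$ specializes: for $X \in \KK\BD(G)$, we have $X \in J_{1,\KK}(G)$ precisely when $(Y\cdot_G X)$ has zero image in $L_{1,\KK}(1)=\KK$ for all $Y \in \KK\BD(G,1)\cong\KK\BD(G)$, and the image of a $(1,1)$-biset (i.e. a finite set) in $\KK\BD(1)\cong\KK$ is just its cardinality. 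Composition $Y \cdot_G X$ of a right $p$-free $G$-set $Y$ with a left $p$-free $G$-set $X$ produces the set of $G$-orbits $G\backslash(Y\times X)$, so $J_{1,\KK}(G) = \{X : \forall Y,\ |G\backslash(Y\times X)| = 0\}$, which is exactly the radical of the symmetric bilinear form $\langle X,Y\rangle = |G\backslash(Y\times X)|$. Hence $\dim_{\KK}S^\Delta_{1,\KK}(G) = \dim_{\KK}\KK\BD(G) - \dim\mathrm{rad}\langle-,-\rangle = \mathrm{rank}\langle-,-\rangle$.

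The next step is to diagonalize the form in the idempotent basis. By the Remark in the previous section, $\{e^G_H\}_{H \in [s_G]_{p'}}$ is a $\KK$-basis of $\KK\BD(G)$ consisting of the primitive idempotents of $\KK B(G)$ indexed by $p'$-subgroups. Since these are orthogonal idempotents in the commutative ring $\KK B(G)$ and the bilinear form is multiplicative in a suitable sense (the orbit count $|G\backslash(-\times-)|$ is additive and the product $e^G_H e^G_K = \beta_{H,K} e^G_H$ vanishes for $H\not\sim_G K$), the form is diagonal: $\langle e^G_H, e^G_K\rangle = 0$ unless $H =_G K$. So the rank equals the number of $H \in [s_G]_{p'}$ with $\langle e^G_H, e^G_H\rangle \ne 0$, and it remains to evaluate this diagonal entry. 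Writing $e^G_H$ in the basis of transitive $G$-sets via the standard Gluck/Yoshida idempotent formula and using that $|G\backslash[G/K]| = 1$, one obtains
\[
\langle e^G_H, e^G_H\rangle = |G\backslash e^G_H| = \frac{1}{|N_G(H)|}\sum_{K \le H}|K|\mu(K,H)
= \frac{1}{|N_G(H)|}\sum_{x\in H}\sum_{\langle x\rangle \le K \le H}\mu(K,H) = \frac{\phi_1(H)}{|N_G(H)|},
\]
where $\phi_1(H) = |\{x \in H : \langle x\rangle = H\}|$, after swapping the order of summation (each $x\in H$ contributes $\sum_{\langle x\rangle\le K\le H}\mu(K,H)$, which is $1$ if $\langle x\rangle = H$ and $0$ otherwise by Möbius inversion on the interval $[\langle x\rangle, H]$). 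This quantity is nonzero if and only if $H$ has a generator, i.e. $H$ is cyclic.

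Combining, the rank of the form equals the number of $G$-conjugacy classes of cyclic $p'$-subgroups of $G$, which is the claimed dimension. The main subtlety — rather than an obstacle — is justifying that the form is genuinely diagonal in the idempotent basis and that $J_{1,\KK}(G)$ is exactly its radical; both hinge on correctly tracking how composition of $p$-bifree bisets in $R\calC^\Delta$ restricts to the orbit-counting pairing, and on the fact that the $e^G_H$ for $p'$-subgroups $H$ are precisely the primitive idempotents of $\KK\BD(G)$ (the content of the Remark in the previous section). Everything else is a routine Möbius-inversion computation borrowed essentially verbatim from \cite{Bouc1996}. Since the excerpt has already carried out the idempotent computation in the displayed equations preceding the statement, the formal proof amounts to assembling these observations, and I would simply cite the preceding discussion.
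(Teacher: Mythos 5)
Your proposal is correct and follows essentially the same route as the paper: the paper likewise identifies $L_{1,\KK}$ with $\KK\BD$, realizes $J_{1,\KK}(G)$ as the radical of the orbit-counting form $\langle X,Y\rangle=|G\backslash(Y\times X)|$, observes that the idempotents $e^G_H$ for $H\in[s_G]_{p'}$ form an orthogonal basis, and evaluates $\langle e^G_H,e^G_H\rangle=\phi_1(H)/|N_G(H)|$ by the same M\"obius-inversion swap, concluding nonvanishing exactly for cyclic $H$. No gaps.
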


\begin{Remark}
    Let $G$ be a finite group. Then $\beta(G)=1$ if and only if $G$ is a cyclic $p'$-group. Thus, Theorem~\ref{thm dimensionoftrivialsimple} is a special case of Theorem~\ref{thm dimensionofsimples}.
\end{Remark}

\section{The functor of Brauer characters}
In this section, we study the Brauer character functor $\KK R_k$, which assigns to each finite group $G$ the Grothendieck group of finite-dimensional $kG$-modules (with respect to short exact sequences), tensored with a field $\KK$ of characteristic zero. When $k$ is an algebraically closed field of characteristic $p > 0$, the functor $\KK R_k(-)$ admits a natural structure of a $p$-bifree biset functor. Our goal is to determine its composition factors. It turns out that $\KK R_k(-)$ decomposes as a direct sum of simple $p$-bifree biset functors indexed by cyclic $p'$-groups and primitive modules for their automorphism groups. 

%For a finite group $G$, we denote by $R_k(G)$ the Grothendieck group of finite dimensional $kG$-modules with respect to short exact sequences. We set $\KK R_k(G)=\KK\otimes R_k(G)$.

To begin with, if $X$ is a $p$-bifree $(H,G)$-biset, then $kX$ is flat as a right $kG$-module and hence tensoring with $kX$ over $kG$ induces a well-defined group homomorphism
\begin{align*}
    R_k(X):=kX\otimes_{kG}-: R_k(G)\to R_k(H),
\end{align*}
and a $\KK$-linear map
\begin{align*}
    \KK R_k(X): \KK R_k(G)\to \KK R_k(H).
\end{align*}
This endows $\KK R_k(-)$ with a structure of $p$-bifree biset functor. 

\begin{Remark}
    Recall that one has an isomorphism $\KK R_k(-)\cong \KK \Proj(-)\cong S_{1,1,\KK}$ of simple diagonal $p$-permutation functors, see \cite[Theorem~5.20]{BoucYilmaz2020} and \cite[Section~6]{BoucYilmaz2025}. 
\end{Remark}

Our aim is to describe the composition factors of $\KK R_k(-)$ as a $p$-bifree biset functor. We first determine the restriction kernels.

\begin{proposition}\label{prop:restkernel}
Let $G$ be a finite group.

\smallskip
{\rm (i)} One has $\underline{\mathbb K R_k}(G) = 0$ unless $G$ is a cyclic $p'$-group.

\smallskip
{\rm (ii)} Let $G$ be a cyclic $p'$-group. Then the $\mathbb K\Aut(G)$-module $\underline{\mathbb K R_k}(G)$ is isomorphic to the direct sum of primitive $\mathbb K\Aut(G)$-modules.
\end{proposition}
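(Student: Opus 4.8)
The plan is to compute $\underline{\KK R_k}(G)$ directly using Brauer characters and the structure of $R_k(G)$ as a representation ring. Recall that over an algebraically closed field $k$ of characteristic $p$, the Grothendieck group $R_k(G)$ has a basis indexed by the isomorphism classes of simple $kG$-modules, equivalently by the $p$-regular conjugacy classes of $G$; after extending scalars to $\KK$ we may think of $\KK R_k(G)$ via Brauer characters as the space of $\KK$-valued class functions on $p$-regular elements. A transitive $p$-bifree biset $[(H\times G)/L]$ acts on $R_k(G)$ through the five elementary operations, and on the representable side the relevant maps are restriction $\Res^G_T$ to $p'$-perfect... more precisely to subgroups $T\le G$, deflation $\Def^T_{T/S}$ for $S$ a $p'$-subgroup, isogation, inflation $\Inf^Y_{Y/X}$ for $X$ a $p'$-subgroup, and induction $\Ind^H_Y$.

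First I would reduce to showing: $x \in \underline{\KK R_k}(G)$ if and only if $\Res^G_H x = 0$ for every proper subgroup $H < G$ and $\Def^{G}_{G/N} x = 0$ for every non-trivial normal $p'$-subgroup $N \unlhd G$. Indeed, since every $\alpha \in \KK B^\Delta(H,G)$ with $|H|<|G|$ factors through the elementary bisets, and induction and inflation are ``built up'' maps that cannot create the vanishing, the kernel $\underline{\KK R_k}(G)$ is exactly the intersection of the kernels of all such restrictions and $p'$-deflations. The condition $\Res^G_H x = 0$ for all proper $H$ forces, by the usual argument with Brauer characters (a Brauer character vanishing on all $p$-regular elements of all proper subgroups, hence on all elements not generating $G$), that $x$ is supported on the Brauer characters of $p$-regular elements generating $G$; this already forces $G$ to be generated by a single $p$-regular element, i.e. $G$ is cyclic of $p'$-order, proving (i). (If $G$ is cyclic but not a $p'$-group, restriction to the $p'$-Hall subgroup, which is proper, kills everything; if $G$ is not cyclic, there are no $p$-regular generators and $\underline{\KK R_k}(G)=0$.)

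For (ii), let $G$ be cyclic of $p'$-order, so $R_k(G) = R_\CC(G) = \ZZ[\hat G]$ is the character ring and $\underline{\KK R_k}(G)$ is the subspace spanned by faithful characters of $G$ (those whose kernel is trivial) — this is precisely the extra condition coming from $\Def^G_{G/N} x = 0$ for all $1\ne N\unlhd G$, since deflation corresponds to restricting a character to those factoring through $G/N$, and a character is killed by all proper deflations iff it is faithful. The automorphism group $\Aut(G)$ acts on $\hat G$, and the span of faithful characters is stable under this action. It remains to identify this $\KK\Aut(G)$-module: for $G \cong \ZZ/m\ZZ$, the faithful characters are permuted simply transitively by $\Aut(G) \cong (\ZZ/m\ZZ)^\times$ (a faithful character is determined by where it sends a fixed generator, which must be a primitive $m$-th root of unity, and $\Aut(G)$ acts regularly on these). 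Hence $\underline{\KK R_k}(G)$ is isomorphic to the regular representation $\KK\Aut(G)$, which (in characteristic zero) decomposes as the direct sum of all the simple $\KK\Aut(G)$-modules, each with multiplicity equal to its dimension; since $\Aut(G)$ is abelian these are exactly the one-dimensional (``primitive'' in the sense of the paper, i.e. faithful on no proper quotient — here ``primitive'' should be read as running over the characters of $(\ZZ/m\ZZ)^\times$) modules, each appearing once. This gives the claimed description.

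The main obstacle I anticipate is the bookkeeping in the first step: verifying carefully that an element of $\KK R_k(G)$ lying in the kernel of all \emph{restrictions} to proper subgroups and all \emph{deflations} by non-trivial $p'$-normal subgroups automatically lies in the kernel of \emph{every} morphism in $\KK B^\Delta(H,G)$ with $|H|<|G|$ — that is, that induction and inflation do not need to be tested separately. This is the standard ``the restriction kernel is detected by elementary restrictions and deflations'' lemma, and it is presumably recorded in the Appendix on restriction kernels alluded to in the text; I would cite that and reduce (i) and (ii) to the two explicit computations above. A secondary, purely computational point is checking the precise dictionary between $p'$-deflation on $R_k(G)$ and the faithfulness condition on Brauer characters, but since $G$ is already forced to be a $p'$-group at that stage, Brauer characters are ordinary characters and this is the classical statement that $\Def^G_{G/N}$ sends a character to its ``$N$-average'', vanishing exactly on the faithful ones.
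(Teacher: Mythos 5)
Your part (i) is correct and is essentially the paper's argument: a nonzero Brauer character is nonzero at some $p$-regular element $g$, so its restriction to the cyclic $p'$-subgroup $\langle g\rangle$ is nonzero, and $\langle g\rangle$ is a proper subgroup unless $G$ itself is a cyclic $p'$-group. Your preliminary reduction of the restriction kernel to elementary restrictions and $p'$-deflations is also fine; it is Lemma~\ref{lem: restrictionkernelsection} of the Appendix.

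Part (ii), however, contains a genuine error. You claim that $\underline{\KK R_k}(G)$ is the span of the faithful irreducible characters of $G$, asserting that this ``is precisely the extra condition coming from $\Def^G_{G/N}x=0$,'' and you then conclude that it is the regular $\KK\Aut(G)$-module, so that \emph{every} character of $(\ZZ/m\ZZ)^\times$ occurs. This drops the restriction conditions: an element of the restriction kernel must also satisfy $\Res^G_H x=0$ for every proper $H<G$, and the span of the faithful characters is not contained in that common kernel. For example, for $G=\ZZ/4\ZZ$ (with $p\ne 2$) and faithful characters $\chi,\chi^3$, the sum $\chi+\chi^3$ takes the value $-2$ on the element of order $2$, so its restriction to the subgroup of order $2$ is nonzero; the restriction kernel here is one-dimensional, spanned by $\chi-\chi^3$, and affords only the primitive (i.e.\ nontrivial) character of $(\ZZ/4\ZZ)^\times$, not the regular representation. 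In general $\underline{\KK R_\CC}(\ZZ/m\ZZ)$ is the intersection of the span of the faithful characters with the common kernel of all restrictions, and as a $\KK\Aut(G)$-module it is $\bigoplus_\xi \KK_\xi$ with $\xi$ running over the characters of $(\ZZ/m\ZZ)^\times$ of conductor exactly $m$ --- ``primitive'' in the paper has its usual Dirichlet-character meaning, not ``all characters'' as you reinterpret it (a dimension count for $m=q$ prime: your answer gives $q-1$, the correct one is $q-2$). The paper sidesteps this computation by identifying $\KK R_k$ with $\KK R_\CC$ on $p'$-groups and invoking Corollary~7.3.5 of Bouc's book; a self-contained argument must impose both families of conditions and then diagonalize the $\Aut(G)$-action on the resulting subspace.
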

\begin{proof}
    (i) Let $\chi\in\mathbb KR_k(G)$ be a non-zero element and let $g\in G$ with $\chi(g)\not= 0$. Then $g$ is a $p'$-element and $\Res^G_{\langle g\rangle}\chi\not= 0$. This shows that $\chi\not\in\underline{\mathbb K R_k}(G)$ and therefore, $\underline{\mathbb K R_k}(G) = 0$ if $G$ is not a cyclic $p'$-group.

    (ii) Let $G$ be a cyclic $p'$-group. We may identify $\mathbb KR_k(H)$ with $\mathbb KR_\mathbb C(H)$ and $\underline{\KK R_k}(H)$ with $\underline{\KK R_\CC} (H)$ by Lemma~\ref{lem: restrictionkernelsection}. The result follows from Corollary 7.3.5 \cite{Bouc2010a}.
\end{proof}

The next step is to show that every element of  $\KK R_k(G)$ lies, up to action by an element of the ideal $I_G$, in the restriction kernel. This will allow us to apply the general theory of restriction kernels and deduce composition factor multiplicities.
\begin{proposition}\label{prop:condition}
    Let $G$ be a finite group. For any $\chi\in \mathbb K R_k(G)$, there exists $\chi'\in \underline{\mathbb K R_k}(G)$ and $\alpha\in I_G$ satisfying the equality 
    \begin{align*}
    \chi = \chi' + \alpha\cdot \chi\,.
    \end{align*}
\end{proposition}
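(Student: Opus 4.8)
The plan is to build $\chi'$ by subtracting off the ``non-cyclic-$p'$'' part of $\chi$ using an idempotent-like operator in the endomorphism ring that lands in $I_G$. The key point is that the restriction kernel $\underline{\KK R_k}(G)$ is detected by restrictions to cyclic $p'$-subgroups, so I want to produce $\alpha \in I_G$ such that $\alpha \cdot \chi$ ``agrees with $\chi$'' on all cyclic $p'$-subgroups, whence $\chi - \alpha\cdot\chi \in \underline{\KK R_k}(G)$. Concretely, for each cyclic $p'$-subgroup $C \le G$ consider the composite endomorphism $\Ind^G_C \circ \Res^G_C$; since $|C| < |G|$ whenever $C \neq G$, each such composite lies in $I_G$, and so does any $\KK$-linear combination of them. (If $G$ itself is a cyclic $p'$-group there is nothing to prove: take $\chi' = \chi$, $\alpha = 0$.)

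First I would recall the relevant structure: $\KK R_k(G)$ has a $\KK$-basis indexed by (equivalently) the $p'$-conjugacy classes of $G$, via Brauer characters evaluated on $p'$-elements, and restriction to $\langle g\rangle$ for $g$ a $p'$-element is a faithful family of maps — this is exactly what was used in the proof of Proposition~\ref{prop:restkernel}(i). So an element lies in $\underline{\KK R_k}(G)$ iff its restriction to every proper cyclic $p'$-subgroup vanishes (restrictions to non-cyclic or non-$p'$ proper subgroups contribute no new information, and restriction to $G$ is the identity, handled by the minimality convention). Next I would write down an explicit operator: by Artin-type induction on the character ring $\KK R_\CC$ (identifying $\KK R_k \cong \KK R_\CC$ as in Lemma~\ref{lem: restrictionkernelsection}), the identity of $\KK R_\CC(G)$ can be expressed as a $\QQ$-linear combination $\mathrm{id} = \sum_{C} \lambda_C\, \Ind^G_C \Res^G_C$ over cyclic subgroups $C$, where Artin's induction theorem guarantees such a relation with rational coefficients. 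Restricting attention to the $p'$-part: on $\KK R_k$ the operators $\Ind^G_C \Res^G_C$ for non-$p'$-cyclic $C$ act through their $p'$-counterparts, so one obtains $\mathrm{id} = \sum_{C \text{ cyclic } p'} \mu_C\, \Ind^G_C \Res^G_C$ as operators on $\KK R_k(G)$, for suitable $\mu_C \in \KK$.

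Now separate the term $C = G$ (which occurs only if $G$ is itself cyclic $p'$, already disposed of) from the rest, and set $\alpha := \sum_{C \neq G} \mu_C\, \Ind^G_C \Res^G_C \in I_G$ and $\chi' := \chi - \alpha\cdot\chi$. Writing $\mathrm{id} = \mu_G\,\id + \alpha$ in the cyclic-$p'$ case collapses things; in the non-cyclic case $\mathrm{id} = \alpha$ as operators, giving $\chi = \alpha\cdot\chi$ outright and $\chi' = 0$. To see $\chi' \in \underline{\KK R_k}(G)$ in general, it suffices to check $\Res^G_D \chi' = 0$ for every proper cyclic $p'$-subgroup $D$; but applying $\Res^G_D$ to the operator identity (which is valid on $\KK R_k(H)$ for every $H$, in particular compatibly with restriction) and using the Mackey formula to expand $\Res^G_D \Ind^G_C \Res^G_C$, one finds $\Res^G_D \chi = \Res^G_D(\alpha\cdot\chi)$, i.e.\ $\Res^G_D\chi' = 0$. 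This is the bulk of the routine computation and I would only sketch the Mackey bookkeeping.

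The main obstacle is the last step: verifying cleanly that the Artin-type operator identity descends to $\KK R_k$ with the sum ranging only over cyclic $p'$-subgroups, and that subtracting the $C=G$ term leaves an element of $I_G$ acting so as to fix $\chi$ modulo $\underline{\KK R_k}(G)$. The subtlety is that Artin's theorem naturally lives over $\QQ$ and over $\KK R_\CC$, so I must be careful that the identification $\KK R_k \cong \KK R_\CC$ (Lemma~\ref{lem: restrictionkernelsection}) is compatible with the biset operations $\Ind^G_C$ and $\Res^G_C$ — which it is, since these are $p$-bifree morphisms and the isomorphism is natural — and that the coefficients remain in $\KK$ after passing to the $p'$-part. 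Once that compatibility is in hand, everything else is formal manipulation with the Mackey formula.
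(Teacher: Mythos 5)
Your treatment of the case where $G$ is not a cyclic $p'$-group is essentially the paper's argument: the paper quotes the explicit Artin--Boltje induction formula
\[
\chi = \frac{1}{|G|}\sum_{\substack{L\le K\le G\\ K \text{ cyclic},\ |K|_p=1}}|L|\,\mu(L,K)\,\ind_L^G\res^G_L\chi ,
\]
whose terms all involve proper subgroups $L$ when $G$ is not a cyclic $p'$-group, so $\chi=\alpha\cdot\chi$ with $\alpha\in I_G$ and $\chi'=0$. (This also spares you the Mackey bookkeeping you sketch at the end: once $\chi'=0$ there is nothing left to verify.)

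The genuine gap is the cyclic $p'$ case, which you dispose of twice and incorrectly both times. First you say ``take $\chi'=\chi$, $\alpha=0$''; but $\chi'$ must lie in $\underline{\KK R_k}(G)$, and for a nontrivial cyclic $p'$-group $G$ a general $\chi$ does not (e.g.\ the trivial character restricts nontrivially to every subgroup; by Proposition~\ref{prop:restkernel}(ii) the restriction kernel is only the sum of the primitive $\KK\Aut(G)$-isotypic pieces, which is a proper subspace). Second, your operator identity $\mathrm{id}=\mu_G\,\mathrm{id}+\alpha$ does not rescue this: in the Artin-type formula the coefficient of $\Ind^G_G\Res^G_G$ is exactly $1$ (the pair $L=K=G$ contributes $|G|\mu(G,G)/|G|=1$), so your $\alpha$ acts as the \emph{zero} operator on $\KK R_k(G)$ and you are back to needing $\chi\in\underline{\KK R_k}(G)$. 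The paper instead handles this case by observing that the restriction of $\KK R_k$ to the full subcategory of cyclic $p'$-groups is semisimple, so Proposition~\ref{prop: semisimpledecomposition} gives $F(G)=\underline{F}(G)\oplus\mathcal{J}F(G)$ and hence the required decomposition. A further, related, inaccuracy: membership in $\underline{\KK R_k}(G)$ is not detected by restrictions to cyclic $p'$-subgroups alone --- the $p$-bifree category also contains deflations along $p'$-normal subgroups, and by Lemma~\ref{lem: restrictionkernelsection} these must be killed as well; your verification of $\chi'\in\underline{\KK R_k}(G)$ only checks restrictions.
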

\begin{proof}
    By Proposition 2.5 (iv) in \cite{BHabil}, for any $\chi\in \mathbb K R_k(G)$, we have
    \[
    \chi = \frac{1}{|G|}\sum_{\substack{L\le K\le G\\ K \, cyclic,\, |K|_p=1}}|L|\mu(L,K)\ind_L^G\res^G_L\chi\,.
    \]
    It follows that if $G$ is not a cyclic $p'$-group, the claim holds by putting $\chi'=0$ and setting 
    \[
    \alpha = \frac{1}{|G|}\sum_{\substack{L\le K\le G\\ K \, cyclic,\, |K|_p=1}}|L|\mu(L,K) \Big[ \frac{G\times G}{\Delta(L)} \Big]\in I_G.
    \] 
    Now suppose $G$ is a cyclic $p'$-group. The restriction of $\mathbb KR_k$ to the full subcategory of cyclic $p'$-groups is semisimple and hence the result follows from Proposition~\ref{prop: semisimpledecomposition}.
    \end{proof}

With this result, we can now describe precisely when a simple $p$-bifree biset functor $S^\Delta_{H,V}$ appears as a composition factor of $KR_k$, and compute its multiplicity.

\begin{theorem}\label{thm:Brauerchars}
    The simple $p$-bifree biset functor $S_{H, V}^\Delta$ is a composition factor of the functor $\mathbb K R_k$ if and only if $H$ is a cyclic $p'$-group and $V$ is a primitive $\mathbb K\Aut(H)$-module. In this case, the multiplicity of $S_{H, V}^\Delta$ is equal to one.
\end{theorem}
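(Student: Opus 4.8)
The plan is to combine the two preceding propositions with the general machinery of restriction kernels sketched in the excerpt (and developed fully in the Appendix). Recall that $S^\Delta_{H,V}$ is a composition factor of $\KK R_k$ if and only if, at the level of evaluations, the pair $(H,V)$ is ``detected'' by the restriction kernel $\underline{\KK R_k}(H)$, viewed as a $\KK\Out(H)$-module. More precisely, the standard restriction-kernel argument (the analogue of Bouc's criterion) shows that the multiplicity of $S^\Delta_{H,V}$ as a composition factor of a functor $F$ equals the multiplicity of $V$ as a composition factor of the $\KK\Out(H)$-module $\underline{F}(H)/(\underline{F}(H)\cap \mathcal J F(H))$, \emph{provided} that $F$ satisfies the hypothesis that every element of $F(H)$ is congruent modulo $I_H$ to an element of $\underline{F}(H)$. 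Proposition~\ref{prop:condition} establishes exactly this last hypothesis for $F = \KK R_k$ at every finite group $G$.

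First I would invoke Proposition~\ref{prop:restkernel}(i) to conclude that $\underline{\KK R_k}(H) = 0$ unless $H$ is a cyclic $p'$-group; by the restriction-kernel detection principle this already forces $S^\Delta_{H,V}$ to have multiplicity zero whenever $H$ is not a cyclic $p'$-group, giving the ``only if'' direction on the group side. Next, for $H$ a cyclic $p'$-group, I would use Proposition~\ref{prop:restkernel}(ii): $\underline{\KK R_k}(H)$ is isomorphic, as a $\KK\Aut(H)$-module, to the direct sum of the primitive $\KK\Aut(H)$-modules, each occurring with multiplicity one. Since $H$ is abelian, $\Out(H) = \Aut(H)$, so this is exactly a statement about $\KK\Out(H)$-modules. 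Here I must also check that for such $H$ the quotient by $\underline{\KK R_k}(H)\cap \mathcal J(\KK R_k)(H)$ does not kill anything — but by Proposition~\ref{prop:condition} combined with the fact that the restriction of $\KK R_k$ to cyclic $p'$-groups is semisimple (Proposition~\ref{prop: semisimpledecomposition}), there is no proper subfunctor structure below $H$ in the relevant range, so $\underline{\KK R_k}(H)$ contributes its full composition series to the composition factors of $\KK R_k$.

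Putting these together: $S^\Delta_{H,V}$ is a composition factor of $\KK R_k$ if and only if $H$ is a cyclic $p'$-group and $V$ appears in $\underline{\KK R_k}(H)$, i.e. $V$ is one of the primitive $\KK\Aut(H)$-modules; and since each primitive module occurs exactly once in $\underline{\KK R_k}(H)$, the multiplicity of $S^\Delta_{H,V}$ is one. This is precisely the assertion of the theorem.

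The main obstacle I anticipate is not the cyclic $p'$-group case — that is essentially read off from Proposition~\ref{prop:restkernel}(ii) and the semisimplicity — but rather making the restriction-kernel detection argument rigorous: one needs the precise statement (presumably from the Appendix) that, under the hypothesis supplied by Proposition~\ref{prop:condition}, the composition factors of $F$ supported at a group $H$ of minimal order among those with $F(H) \neq 0$ ``above'' a given stratum correspond bijectively, with multiplicity, to the composition factors of the $\KK\Out(H)$-module $\underline{F}(H)$. One must verify that $\KK R_k$ has no composition factor $S^\Delta_{H,V}$ with $|H|$ not minimal in its stratum that could spuriously contribute — but this is handled by the fact that Proposition~\ref{prop:condition} holds at \emph{every} $G$, which lets the induction on $|G|$ run cleanly. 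I would therefore spend most of the write-up carefully citing the Appendix lemma on restriction kernels and checking its hypotheses, then the rest is immediate from the two propositions.
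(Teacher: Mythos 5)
Your proposal is correct and follows essentially the same route as the paper: invoke Proposition~\ref{prop:condition} to verify the hypothesis of the restriction-kernel criterion (Theorem~\ref{thm thecondition} in the Appendix), which identifies the multiplicity of $S^\Delta_{H,V}$ in $\KK R_k$ with the multiplicity of $V$ in $\underline{\KK R_k}(H)$, and then read off the answer from Proposition~\ref{prop:restkernel}. The only minor redundancy is your worry about the quotient by $\underline{\KK R_k}(H)\cap \mathcal J(\KK R_k)(H)$: Theorem~\ref{thm thecondition} already gives the multiplicity directly in terms of $\underline{F}(H)$ itself, so no such check is needed.
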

\begin{proof}
     It follows from Theorem~\ref{thm thecondition} and Proposition \ref{prop:condition} that the multiplicity of $S_{H, V}^\Delta$ in $\mathbb KR_k$ as a composition factor is equal to that of the $\mathbb K\Aut(G)$-module $V$ in $\underline{\mathbb K R_k}(H)$. 
    
    By Proposition \ref{prop:restkernel}, the restriction kernel is zero unless $H$ is a cyclic $p'$-group. Hence the multiplicity of $S_{H, V}^\Delta$ in $\KK R_k$ is equal to $0$ if $H$ is not a cyclic $p'$-group. Furthermore, if $H$ is a cyclic $p'$-group, by Proposition~\ref{prop:restkernel}{\rm (ii)}, the multiplicity of $S_{H, V}^\Delta$ in $\KK R_k$ is non-zero if and only if $V$ is primitive, in which case it is equal to $1$. 
\end{proof}
\begin{proposition}\label{prop simpleprimitiveBrauer}
    Let $H$ be a cyclic $p'$-group and $V$ a primitive $\KK\Aut(H)$-module. The subfunctor $F$ of $\KK R_k$ generated by $V$ is isomorphic to the simple functor $S^\Delta_{H,V}$.
\end{proposition}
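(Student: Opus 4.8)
The plan is to identify the subfunctor $F$ generated by $V$ inside $\KK R_k$ and show it is simple with head $S^\Delta_{H,V}$, using the decomposition machinery already set up. First I would recall that, by Proposition~\ref{prop:restkernel}(ii), the $\KK\Aut(H)$-module $\underline{\KK R_k}(H)$ is a direct sum of primitive $\KK\Aut(H)$-modules, each occurring with multiplicity one (this multiplicity-one statement being exactly what Theorem~\ref{thm:Brauerchars} records). Since $V$ is primitive, $V$ embeds in $\underline{\KK R_k}(H)$; fix such a copy. The subfunctor $F$ generated by $V$ is, by definition, the smallest $p$-bifree biset subfunctor of $\KK R_k$ whose evaluation at $H$ contains $V$. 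Because $V \subseteq \underline{\KK R_k}(H)$ is killed by every morphism to a smaller group, $H$ is a minimal group of $F$ and $\underline{F}(H) \supseteq V$; in fact $\underline{F}(H) = V$ since $V$ is a simple $\KK\Aut(H)$-module and $F(H)$ is generated as an $E_H$-module by $V$, so $\underline{F}(H)$, being an $\Out(H)=\Aut(H)$-submodule contained in the simple module $V$ it generates, equals $V$.

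Next I would invoke the general correspondence between subfunctors of $\KK R_k$ and their restriction kernels at minimal groups. The key point is that $\KK R_k$ is a semisimple $p$-bifree biset functor — this is what Proposition~\ref{prop:condition} together with Theorem~\ref{thm:Brauerchars} yields, since every composition factor appears with multiplicity one and (by the argument via Proposition~\ref{prop: semisimpledecomposition} on the semisimple subcategory of cyclic $p'$-groups) the extensions split. Concretely, $\KK R_k \cong \bigoplus_{(H',V')} S^\Delta_{H',V'}$, the sum over cyclic $p'$-groups $H'$ and primitive $\KK\Aut(H')$-modules $V'$. Under this decomposition, the copy of $V$ we fixed inside $\underline{\KK R_k}(H)$ corresponds to the evaluation at $H$ of exactly one summand $S^\Delta_{H,V}$ (the $H$-evaluation of any other summand $S^\Delta_{H',V'}$ with $H'\ne H$ cyclic $p'$ of the same order contributes a non-isomorphic $\KK\Aut(H)$-module, and summands with $|H'|\ne|H|$ or $H'$ not isomorphic to $H$ do not meet $\underline{\KK R_k}(H)$ in the $V$-isotypic part). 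Therefore the subfunctor generated by $V$ is contained in that summand $S^\Delta_{H,V}$, and since $S^\Delta_{H,V}$ is simple and the generated subfunctor is non-zero, $F = S^\Delta_{H,V}$.

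Alternatively, and perhaps more cleanly, I would argue directly: $F$ is generated by $V \subseteq \underline{F}(H)$, so by the universal property of $L_{H,V}$ there is a surjection $L_{H,V} \twoheadrightarrow F$; composing with any non-zero map $F \to \KK R_k / (\text{maximal subfunctor not containing } V)$ and using that $L_{H,V}/J_{H,V} = S^\Delta_{H,V}$ is the unique simple quotient, it suffices to show the map $L_{H,V} \to F$ factors through $S^\Delta_{H,V}$, i.e. that $J_{H,V}$ maps to zero in $F \subseteq \KK R_k$. This follows because $\KK R_k$ is semisimple: any subfunctor generated by a simple-module-worth of elements in a restriction kernel, inside a semisimple functor, is itself simple. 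Hence $F \cong S^\Delta_{H,V}$.

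I expect the main obstacle to be justifying cleanly that the generated subfunctor does not ``overshoot'' — that is, ruling out that $F$ properly contains $S^\Delta_{H,V}$ or picks up other composition factors. The semisimplicity of $\KK R_k$, already established via Propositions~\ref{prop:condition} and~\ref{prop: semisimpledecomposition} and Theorem~\ref{thm:Brauerchars}, is precisely the tool that closes this gap: in a semisimple functor every subfunctor is a direct sum of simples, and the one generated by the $V$-isotypic line in $\underline{\KK R_k}(H)$ must be the single summand $S^\Delta_{H,V}$. The remaining care is bookkeeping: checking that $\underline{F}(H)$ is exactly $V$ and not a larger isotypic piece, which follows from $V$ being simple over $\Aut(H)$ and $F$ being \emph{generated} by $V$, hence minimal with that property.
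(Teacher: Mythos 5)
Your argument is circular. The semisimplicity of $\KK R_k$ --- equivalently the direct-sum decomposition $\KK R_k\cong\bigoplus S^\Delta_{H',V'}$ --- is not available at this point: in the paper it is Corollary~\ref{cor Brauercharacter}, and it is deduced from Theorem~\ref{thm:Brauerchars} \emph{together with the proposition you are proving}. Theorem~\ref{thm:Brauerchars} only identifies the composition factors and their multiplicities; it says nothing about the extensions splitting. Your justification --- multiplicity one plus Proposition~\ref{prop: semisimpledecomposition} forces splitting --- does not work: Proposition~\ref{prop: semisimpledecomposition} takes semisimplicity as a \emph{hypothesis}, and multiplicity-freeness does not in general imply semisimplicity. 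The Burnside functor in this very paper is a counterexample: by Corollary~\ref{cor compfactorBurnside} its composition factors $S^\Delta_{G,\KK}$ each occur with multiplicity one, yet by Theorem~\ref{thm subfunctorlattice} its subfunctor lattice is the lattice of \emph{closed} subsets of $[\BD_p\text{-gr}]$, which is not the full power set (e.g.\ $C_q\times C_q\gg 1$), so $\KK B$ is not semisimple and $S^\Delta_{1,\KK}$ is a quotient but not a subfunctor. Your ``alternative'' argument at the end invokes the same unproved semisimplicity to kill $J_{H,V}$, so it has the same gap. (The preliminary bookkeeping --- that $H$ is minimal for $F$ and $F(H)=V$ because $I_H$ annihilates the restriction kernel --- is fine and agrees with the paper.)

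The real content of the proposition is precisely to rule out a nonsplit extension inside $F$, i.e.\ a composition factor $S^\Delta_{H',V'}$ with $|H|<|H'|$ sitting above $S^\Delta_{H,V}$. The paper does this by noting that $F$ is a quotient of Webb's functor $\Delta_{H,V}$ and restricting to the full subcategory $\calC_{H'}$ of subquotients of $H'$: since $H'$ is a $p'$-group, the $p$-bifree and ordinary biset categories coincide there, that restricted functor category is semisimple, and $\Delta_{H,V}$ restricts to a simple functor (Webb, Corollaries~5.4 and~5.5), so the restriction of $F$ to $\calC_{H'}$ cannot also contain the restriction of $S^\Delta_{H',V'}$. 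Some device of this kind, which actually \emph{proves} the splitting rather than assuming it, is what your proof is missing.
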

\begin{proof}
    First note that $F(H)\cong V$ and $H$ is a minimal group for $F$ since $V$ is in the restriction kernel. Suppose for a contradiction that $F$ is not simple. Then there exists a composition factor $T$ of $F$ not isomorphic to $S^\Delta_{H,V}$. By Theorem~\ref{thm:Brauerchars}, $T\cong S^\Delta_{H',V'}$ for some cyclic $p'$-group $H'$ and a primitive $\KK\Aut(H')$-module $V'$. Note that since $F$ is generated by $V$, one has $|H|<|H'|$. Note also that the functor $F$ is a quotient of the functor $\Delta_{H,V}$ introduced in Section~5 of \cite{Webb2010}. 
    
    Consider the full subcategory $\mathcal{C}_{H'}$ on the set of subquotients of $H'$. By Corollary~5.4 in \cite{Webb2010}, the restriction of the functor $\Delta_{H,V}$ to $\mathcal{C}_{H'}$ is isomorphic to the functor $\lexp{\mathcal{C}_{H'}}{\Delta_{H,V}}$. But the category of $p$-bifree biset functors over $\mathcal{C}_{H'}$ is the same as the category of biset functors over $\mathcal{C}_{H'}$ and hence it is semisimple. In particular, the restriction of the functor $\Delta_{H,V}$ to $\mathcal{C}_{H'}$ is simple as $\Delta_{H,V}$ has a simple head by Corollary~5.5 in \cite{Webb2010}. This implies that the restriction of $F$ to $\mathcal{C}_{H'}$ is a simple functor with minimal group $H$. On the other hand, the restriction of $T$ appears as a composition factor in the restriction of $F$. Contradiction.
\end{proof} 

\begin{corollary}\label{cor Brauercharacter}
    One has
    \begin{align*}
        \KK R_k\cong \bigoplus_{(m,\xi)}S^\Delta_{\ZZ/m\ZZ, \CC_\xi}
    \end{align*}
    where $(m,\xi)$ runs through a set of pairs consisting of a positive $p'$-number $m$ and a primitive character $\xi$ of $(\ZZ/m\ZZ)^\times$.
\end{corollary}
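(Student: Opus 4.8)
The plan is to combine the decomposition result in Theorem~\ref{thm:Brauerchars} with the semisimplicity of $\KK R_k$ as a $p$-bifree biset functor. Theorem~\ref{thm:Brauerchars} already tells us that the composition factors of $\KK R_k$ are exactly the simple functors $S^\Delta_{H,V}$ where $H$ is a cyclic $p'$-group and $V$ is a primitive $\KK\Aut(H)$-module, each occurring with multiplicity one. So the only thing left is to upgrade ``the composition factors are $\ldots$'' to an actual direct sum decomposition, which requires knowing that $\KK R_k$ has no nontrivial self-extensions among these simple functors --- equivalently, that $\KK R_k$ is semisimple.

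First I would establish semisimplicity. By Proposition~\ref{prop:condition}, every $\chi\in\KK R_k(G)$ can be written as $\chi'+\alpha\cdot\chi$ with $\chi'\in\underline{\KK R_k}(G)$ and $\alpha\in I_G$; this is exactly the hypothesis needed to invoke the general criterion (Theorem~\ref{thm thecondition} in the Appendix) which, together with the fact that each restriction kernel $\underline{\KK R_k}(H)$ is a semisimple $\KK\Aut(H)$-module (Proposition~\ref{prop:restkernel}(ii)), forces $\KK R_k$ to be a semisimple object of $\calF^\Delta_\KK$. Alternatively, and perhaps more cleanly, I would argue directly from Proposition~\ref{prop simpleprimitiveBrauer}: for each pair $(H,V)$ with $H$ a cyclic $p'$-group and $V$ primitive, the subfunctor of $\KK R_k$ generated by (a copy of) $V$ inside $\underline{\KK R_k}(H)$ is isomorphic to $S^\Delta_{H,V}$. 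Summing these subfunctors over all such pairs gives a semisimple subfunctor $F\subseteq\KK R_k$ whose composition factors already exhaust those of $\KK R_k$ by Theorem~\ref{thm:Brauerchars}; hence $F=\KK R_k$ and the sum is direct, so $\KK R_k\cong\bigoplus_{(H,V)}S^\Delta_{H,V}$.

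Second, I would translate this abstract decomposition into the concrete indexing set in the statement. A cyclic $p'$-group is uniquely $\ZZ/m\ZZ$ for a positive integer $m$ prime to $p$, and $\Aut(\ZZ/m\ZZ)\cong(\ZZ/m\ZZ)^\times$ is abelian, so its simple $\KK$-modules (with $\KK$ of characteristic zero, containing enough roots of unity --- or passing to $\CC$) are one-dimensional, given by characters $\xi$ of $(\ZZ/m\ZZ)^\times$, and the notation $\CC_\xi$ denotes the corresponding module. The primitivity condition on $V$ as a $\KK\Aut(\ZZ/m\ZZ)$-module is precisely the condition that $\xi$ be a primitive (i.e.\ faithful) character of $(\ZZ/m\ZZ)^\times$, in the sense of \cite[Chapter~7]{Bouc2010a}. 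Substituting $(H,V)=(\ZZ/m\ZZ,\CC_\xi)$ into the direct sum and letting $(m,\xi)$ range over a set of representatives of the relevant pairs yields the displayed isomorphism.

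The main obstacle is really just the first step --- promoting the composition-factor statement to a genuine direct sum, i.e.\ verifying semisimplicity --- and I expect this to go through smoothly using either Proposition~\ref{prop simpleprimitiveBrauer} or the Appendix criterion together with Propositions~\ref{prop:restkernel} and \ref{prop:condition}; the identification of the index set is routine bookkeeping about abelian automorphism groups and the definition of primitive character. One small point to be careful about is the choice of field: if $\KK$ does not contain the needed roots of unity the simple $\KK\Aut(\ZZ/m\ZZ)$-modules may not be one-dimensional, which is why the statement is phrased with $\CC_\xi$; I would either assume $\KK$ splits all the relevant group algebras or state the result over $\CC$ and note the general case follows by extending scalars.
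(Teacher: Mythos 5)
Your second, preferred argument --- each composition factor appears as a simple subfunctor by Proposition~\ref{prop simpleprimitiveBrauer}, each occurs with multiplicity one by Theorem~\ref{thm:Brauerchars}, so the sum of these simple subfunctors is direct and exhausts $\KK R_k$ --- is exactly the paper's proof, which cites precisely those two results. (Your first route is weaker as stated, since Theorem~\ref{thm thecondition} only identifies multiplicities and does not by itself force semisimplicity, but you correctly rely on the second route instead.)
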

\begin{proof}
    This follows immediately from Theorem~\ref{thm:Brauerchars} and Proposition~\ref{prop simpleprimitiveBrauer}
\end{proof}

\section{The functor of complex character ring}
In this section, we examine the ordinary character ring functor $\CC R_\CC$ as a $p$-bifree biset functor. Since the ordinary character ring is additive under biset composition and functorial with respect to maps in the $p$-bifree category, it inherits a natural $p$-bifree structure. We classify its composition factors explicitly and describe its relationship to the Brauer character functor $\KK R_k$. The main result is a short exact sequence of $p$-bifree biset functors, which reveals how the modular and ordinary character theories fit together within this categorical framework.

Let $\CC$ be an algebraically closed field of characteristic zero. For a finite group $G$, we denote by $R_\CC(G)$ the Grothendieck ring of the category of finite dimensional $\CC G$-modules. We identify it with the ring of class functions on $G$ with values in $\CC$. We set $\CC R_\CC(G)=\CC\otimes_\ZZ R_\CC(G)$.

If $H$ is another finite group and $X$ is a $p$-bifree $(H,G)$-biset, then tensoring with $\CC X$ over $\CC G$ induces a well-defined $\CC$-linear map
\begin{align*}
    \CC R_\CC(X): \CC R_\CC(G)\to \CC R_\CC(H).
\end{align*}
This endows $\CC R_\CC(-)$ with a structure of $p$-bifree biset functor. 

\begin{Remark}\label{rem directsumcharacterfunctor}
Given a pair $(G,V)$ where $G$ is a finite group and $V$ is a simple $\CC\Out(G)$-module, let $S_{G,V}$ denote the associated simple biset functor. By \cite[Corollary~7.3.5]{Bouc2010a}, one has an isomorphism
\begin{align*}
    \CC R_\CC\cong \bigoplus_{(m,\xi)}S_{\ZZ/m\ZZ,\CC_\xi}
\end{align*}
of biset functors, where $(m,\xi)$ runs through the set of pairs consisting of a positive integer $m$ and a primitive character $\xi:(\ZZ/m\ZZ)^\times\to\CC^\times$. This restricts to an isomorphism of $p$-bifree biset functors. Note that $S_{\ZZ/m\ZZ,\CC_\xi}$ is not necessarily simple as a $p$-bifree biset  functor. For any finite group $G$, we identify $S_{\ZZ/m\ZZ,\CC_\xi}(G)$ as a subspace of $\CC R_\CC(G)$ via this isomorphism. We also denote by $\tilde{\xi}$ the (virtual) character of $\ZZ/m\ZZ$ obtained by extending $\xi$ by $0$, i.e., for $x\in \ZZ/m\ZZ$,
\begin{align*}
    \tilde{\xi}(x)=\begin{cases}
        \xi(x), \quad &\text{if } x\in (\ZZ/m\ZZ)^\times\\
        0,\quad &\text{otherwise}\,.
    \end{cases}
\end{align*}
\end{Remark}

\begin{lemma}\label{lem conditionforprimitive}
    Let $G$ be a finite group. Let also $m$ be a positive integer and $\xi$ a primitive character of $(\ZZ/m\ZZ)^\times$. Then for any $\chi\in S_{\ZZ/m\ZZ,\CC_\xi}(G)$, there exists $\chi'\in \underline{S_{\ZZ/m\ZZ,\CC_\xi}}(G)$ and $\alpha\in I_G$ such that
    \begin{align*}
        \chi=\chi'+\alpha\cdot\chi\,.
    \end{align*}
\end{lemma}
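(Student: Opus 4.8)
The plan is to follow the proof of Proposition~\ref{prop:condition} almost verbatim, replacing Brauer's induction formula for $\KK R_k$ by the classical Artin induction theorem for $\CC R_\CC$, and to handle separately the case in which $G$ is cyclic.

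First I would treat the case where $G$ is \emph{not} cyclic. Artin induction gives, for every $\chi\in\CC R_\CC(G)$,
\[
\chi=\frac1{|G|}\sum_{\substack{L\le K\le G\\ K\text{ cyclic}}}|L|\,\mu(L,K)\,\ind_L^G\res^G_L\chi .
\]
Every subgroup $L$ occurring here is contained in a cyclic subgroup, hence is cyclic; since $G$ is not cyclic this forces $L<G$. For any $L\le G$ one has the identity $\ind_L^G\res^G_L=\bigl[(G\times G)/\Delta(L)\bigr]$ in $B^\Delta(G,G)$, and because $\Delta(L)$ has trivial $k_1$ and $k_2$ with $q(\Delta(L))\cong L$, this biset is indeed $p$-bifree and has $|q(\Delta(L))|=|L|<|G|$, hence lies in $I_G$. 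Therefore
\[
\alpha:=\frac1{|G|}\sum_{\substack{L\le K\le G\\ K\text{ cyclic}}}|L|\,\mu(L,K)\,\bigl[(G\times G)/\Delta(L)\bigr]\in I_G
\]
and $\chi=\alpha\cdot\chi$, so the claim holds with $\chi'=0$; since $S_{\ZZ/m\ZZ,\CC_\xi}$ is a $p$-bifree subfunctor of $\CC R_\CC$ by Remark~\ref{rem directsumcharacterfunctor}, $\alpha\cdot\chi$ stays in $S_{\ZZ/m\ZZ,\CC_\xi}(G)$, as required.

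Next comes the cyclic case. If $G$ is cyclic and $m\nmid|G|$, then $\ZZ/m\ZZ$ is not a subquotient of $G$, so $S_{\ZZ/m\ZZ,\CC_\xi}(G)=0$ and there is nothing to prove. If $m\mid|G|$, the Artin identity degenerates (only the term $L=G$ survives) and one must argue differently: the restriction of $\CC R_\CC$ to the full subcategory $\calC^\Delta_{\mathrm{cyc}}$ of cyclic groups is a semisimple $p$-bifree biset functor. Granting this, each subfunctor $S_{\ZZ/m\ZZ,\CC_\xi}\big|_{\calC^\Delta_{\mathrm{cyc}}}$ is semisimple too, and Proposition~\ref{prop: semisimpledecomposition} yields the desired $\chi=\chi'+\alpha\cdot\chi$ with $\chi'\in\underline{S_{\ZZ/m\ZZ,\CC_\xi}}(G)$ and $\alpha\in I_G$. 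To establish the semisimplicity I would use multiplicativity of the character functor, $\CC R_\CC(\ZZ/n\ZZ)\cong\CC R_\CC(\ZZ/n_p\ZZ)\otimes\CC R_\CC(\ZZ/n_{p'}\ZZ)$ compatibly with the $p$-bifree structural maps: on the $p'$-part one invokes the classical semisimplicity of the character functor over cyclic $p'$-groups (exactly as in the proof of Proposition~\ref{prop:condition} for $\KK R_k$), while on the $p$-part, where the only available morphisms are $\Res$, $\Ind$ and isogations, one checks semisimplicity of $\CC R_\CC$ directly from the level decomposition $\CC R_\CC(\ZZ/p^a\ZZ)=\bigoplus_{0\le j\le a}V_j$ into primitive parts.

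The main obstacle is precisely this cyclic case, and within it the semisimplicity of $\CC R_\CC$ restricted to cyclic groups. Unlike in Proposition~\ref{prop:condition}, where the minimal group $\ZZ/m\ZZ$ was automatically a $p'$-group so that Bouc's results over cyclic $p'$-groups could be quoted verbatim, here $m$ may be divisible by $p$; the $p$-part of a cyclic group is rigid in the $p$-bifree category, admitting no non-trivial inflations or deflations, so one must verify by hand that the character functor over cyclic $p$-groups --- a much smaller category than the full biset category --- remains semisimple, and then reassemble the two tensor factors. Once that is in place, everything else (the Artin identity, the identification $\ind_L^G\res^G_L=[(G\times G)/\Delta(L)]$, and its membership in $I_G$) is routine, and the conclusion follows by combining the two cases together with Theorem~\ref{thm thecondition}.
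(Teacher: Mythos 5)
Your treatment of the non-cyclic case and of cyclic $G$ with $m\nmid|G|$ coincides with the paper's: Artin induction, the identification $\ind_L^G\res_L^G=\bigl[(G\times G)/\Delta(L)\bigr]\in I_G$, and the vanishing of $S_{\ZZ/m\ZZ,\CC_\xi}$ on cyclic groups whose order is not a multiple of $m$. Where you genuinely diverge is the remaining case $m\mid |G|$ with $G$ cyclic. The paper proceeds by an explicit computation: it reduces to the one-dimensional space $\tilde e_G^G S_{\ZZ/m\ZZ,\CC_\xi}(G)$ spanned by $\tilde e_G^G\Inf^G_{G/N}\tilde\xi$, and then splits according to whether $N_{p'}\ne 1$ (in which case $m_{G,N_{p'}}\ne 0$ lets one recover the generator by an element of $I_G$) or $N_{p'}=1$ (in which case a direct idempotent computation shows the generator lies in the restriction kernel). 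You instead want to invoke semisimplicity of $\CC R_\CC$ restricted to cyclic groups and apply Proposition~\ref{prop: semisimpledecomposition}. That route is not circular (the paper only proves semisimplicity of $S_{\ZZ/m\ZZ,\CC_\xi}$ \emph{after} this lemma, but you only need it over the cyclic subcategory, proved independently), and it is legitimate to pass to the cyclic subcategory here, since by Lemma~\ref{lem: restrictionkernelsection} the restriction kernel is computed on sections, and sections of cyclic groups are cyclic. Note, however, that the paper's explicit computation also delivers Corollary~\ref{cor restkerofprimbiset} (the precise identification of $\underline{S_{\ZZ/m\ZZ,\CC_\xi}}(G)$ as $\CC\tilde\xi_{m,n,p}$), which is needed for Theorem~\ref{thm decompofsimplebisetfun}; your route would have to recover that separately from the semisimple decomposition.

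Two points in your sketch need repair before the argument is complete. First, the ``level decomposition'' of $\CC R_\CC(\ZZ/p^a\ZZ)$ must be taken by \emph{support} of class functions (the span of functions supported on elements of order exactly $p^j$, i.e.\ of the extended-by-zero primitive characters $\tilde\sigma$ of the level-$p^j$ quotients), not by conductor of irreducible characters: restriction sends a faithful character of $\ZZ/p^{a}\ZZ$ to a faithful character of $\ZZ/p^{a-1}\ZZ$, so the conductor grading is not stable under $\Res$, whereas the support grading is stable under both $\Res$ and $\Ind$ and does yield a decomposition into simple summands $S^\Delta_{\ZZ/p^j\ZZ,\CC_\sigma}$ (one checks $\Res\circ\Ind=p^{a-j}\,\id$ to see each summand is simple). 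Second, the reassembly of the $p$- and $p'$-parts is more than a remark: you need a K\"unneth-type isomorphism $B^\Delta(C'\times P',C\times P)\cong B(C',C)\otimes B^\Delta(P',P)$ for coprime parts, the compatibility of $\CC R_\CC$ with external products of bisets, and the fact that an external tensor product of semisimple functors over such a product category is semisimple. All of this is true and provable, but it is a nontrivial amount of infrastructure that your proposal only gestures at, and it is precisely what the paper's hands-on computation with $m_{G,N_{p'}}$ and $\tilde e^G_G$ avoids.
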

\begin{proof}
    By Proposition~1.5(iv) of Chapter 3 of \cite{BHabil}, for any $\chi\in \CC R_\CC(G)$, we have
    \begin{align*}
    \chi = \frac{1}{|G|}\sum_{\substack{L\le K\le G\\ K \, cyclic}}|L|\mu(L,K)\ind_L^G\res^G_L\chi\,.
    \end{align*}
    It follows that if $G$ is not a cyclic group, the claim holds by putting $\chi'=0$ and setting 
    \[
    \alpha = \frac{1}{|G|}\sum_{\substack{L\le K\le G\\ K \, cyclic}}|L|\mu(L,K) \Big[ \frac{G\times G}{\Delta(L)} \Big]\in I_G.
    \] 
    Now suppose $G=\ZZ/n\ZZ$ is a cyclic group. If $n$ is not a multiple of $m$, then by \cite[Corollary~7.4.3]{Bouc2010a} $S_{\ZZ/m\ZZ,\CC_\xi}(\ZZ/n\ZZ)=\{0\}$ and hence there is nothing to prove. Therefore, assume that $n$ is a multiple of $m$ and let $\chi\in S_{\ZZ/m\ZZ,\CC_\xi}(\ZZ/n\ZZ)$. Then we can write
    \[
    \chi = \tilde e_G^G\chi + (1-\tilde e_G^G)\chi.
    \]
    Since $1-\tilde e_G^G\in I_G$, it is sufficient to consider $\tilde e_G^G\chi$. But since $S_{\mathbb Z/m\mathbb Z,\CC_\xi}$ is generated by $\tilde \xi$, the space $\tilde e_G^GS_{\mathbb Z/m\mathbb Z, \CC_\xi}(G)$ is one dimensional generated by the inflation $\tilde e_G^G\Inf_{\mathbb Z/m\mathbb Z}^G\tilde \xi$. Hence we may put $\tilde e_G^G\chi = \tilde e_G^G\Inf_{\mathbb Z/m\mathbb Z}^G\tilde \xi$. 

    Let $N\le G$ be such that $G/N \cong \mathbb Z/m\mathbb Z$. Write $N = N_p\times N_{p'}$. Assume first that $N_{p'}\not= 1$. Then, by \cite[Proposition~2.5.12 and Theorem~5.2.4]{Bouc2010a}, one has
    \begin{eqnarray*}
    (\tilde e_G^G\Inf_{G/N_{p'}}^G\Def_{G/N_{p'}}^G) \tilde e_G^G\Inf_{G/N}^G\tilde \xi &=& \tilde e_G^G\Inf_{G/N_{p'}}^G(\Def_{G/N_{p'}}^G \tilde e_G^G\Inf_{G/N_{p'}}^G)\Inf_{G/N}^{G/N_{p'}}\tilde \xi \\
    &=& m_{G, N_{p'}}\tilde e_G^G\Inf_{G/N_{p'}}^G\tilde e_{G/N_{p'}}^{G/N_{p'}}\Inf_{G/N}^{G/N_{p'}}\tilde \xi\\
    &=& m_{G, N_{p'}}\tilde e_G^G\widetilde{(\Inf_{G/N_{p'}}^G e_{G/N_{p'}}^{G/N_{p'}})}\Inf_{G/N_{p'}}^G\Inf_{G/N}^{G/N_{p'}}\tilde \xi\\
    &=& m_{G, N_{p'}}\tilde e_G^G\Inf_{G/N}^G\tilde \xi\,.
    \end{eqnarray*}
    
Since $G$ is cyclic, $m_{G, N_{p'}}\not = 0$ by \cite[Proposition~5.6.1]{Bouc2010a}. Thus, the condition is satisfied by putting $\chi' = 0$ and $\alpha =\frac{1}{m_{G,N}} \tilde e_G^G\Inf_{G/N_{p'}}^G\Def_{G/N_{p'}}^G$. This also shows that $\underline{S_{\ZZ/m\ZZ, \CC_\xi}}(G) = 0$. 
    
Now suppose that $N_{p'} = 1$, so $N$ is a cyclic $p$-group. Note that the restriction of $\tilde e_G^G\Inf^G_{G/N}\tilde\xi$ to a proper subgroup is equal to zero. Hence if $G$ is a $p$-group, then $\tilde e_G^G\Inf^G_{G/N}\tilde\xi\in \underline{S_{\ZZ/m\ZZ, \CC_\xi}}(G)$. Again, the condition is satisfied in this case too.
    
Suppose $G$ is not a $p$-group, then
    \begin{eqnarray*}
        \tilde e_G^G\Inf_{G/N}^G\tilde\xi &=& \frac{1}{|G|}\sum_{K\le G}|K| \mu(K, G)\Ind_K^G\Res^G_K\Inf_{G/N}^G\tilde\xi\\
        &=& \frac{1}{|G|}\sum_{K\le G}|K| \mu(K, G)\Ind_K^G\Inf^K_{K/(K\cap N)}\Isom\Res^{G/N}_{KN/N}\tilde\xi\\
        &=& \frac{1}{|G|}\sum_{K\le G: KN = G}|K| \mu(K, G)\Ind_K^G\Inf^K_{K/(K\cap N)}\Isom \,\tilde \xi\\
    \end{eqnarray*}
    
    Note that since $N$ is a $p$-subgroup of $G$, the condition $KN= G$ holds if and only of $G_{p'}\subseteq K$ and $K_pN = G_p$. There are two cases:
    \begin{enumerate}[(i)]
        \item If $N<G_p$, then $NK_p = G_p$ holds if and only if $K_p = G_p$ and in this case we get $K = G$.
        \item If $N = G_p$, then $K_p$ can be any subgroup of $G_p$, so $K$ is any subgroup with $G_{p'}\subseteq K$.
    \end{enumerate}

    In Case (i), we get
    \begin{eqnarray*}
        \tilde e_G^G\Inf_{G/N}^G\tilde\xi 
        &=& \frac{1}{|G|}\sum_{K\le G: KN = G}|K| \mu(K, G)\Ind_K^G\Inf^K_{K/(K\cap N)}\Isom \,\tilde \xi\\
        &=& \Inf_{G/N}^G\tilde\xi
    \end{eqnarray*}
    On the other hand, in Case (ii), we obtain
    \begin{eqnarray*}
        \tilde e_G^G\Inf_{G/N}^G\tilde \xi 
        &=& \frac{1}{|G|}\sum_{G_{p'}\le K\le G}|K| \mu(K, G)\Ind_K^G\Inf^K_{K/(K\cap N)}\Isom \,\tilde\xi\,.\\
    \end{eqnarray*}
    Here we have $\mu(K, G) = \mu(|G:K|)$, the number theoretic Mobius function. We have $\mu(|G:K|) = 1$ if $G=K$, it is $-1$ if $|G:K| = p$ and $0$ otherwise. Hence the above equality becomes
      \begin{eqnarray*}
        \tilde e_G^G\Inf_{G/N}^G\tilde \xi 
        &=& \Inf_{G/N}^G\Isom \,\tilde\xi - \frac{1}{p}\Ind\Inf_{K/(K\cap N)}^G\Isom\, \tilde\xi\,. \\
    \end{eqnarray*}
    
Now for any non-trivial $p'$-subgroup $M$ of $G$, we have
    \[
    \Def^G_{G/M}\Inf_{G/N}^G\tilde \xi = \Inf_{G/NM}^{G/M}\Def^{G/N}_{G/NM}\tilde \xi = 0
    \]
    since $|G:NM|<|G:N| = m$. Hence $\tilde e^G_G\Inf_{G/N}^G\tilde \xi\in\underline{S_{\ZZ/m\ZZ, \CC_\xi}}(G)$ in Case (i).

    For Case (ii), we need to evaluate 
    \[
    \Def^G_{G/M}\Big(\Inf_{G/N}^G\Isom \,\xi - \frac{1}{p}\Ind\Inf_{K/(K\cap N)}^G\Isom \xi\Big)\,.
    \]
The first term is zero, by the above calculations. We also evaluate the second term:
\[
    \Def^G_{G/M}\Ind\Inf_{K/(K\cap N)}^G\Isom\, \tilde\xi = \Ind\Inf_{K/(M(K\cap N))}^{G/M}\Def^{K/(K\cap N)}_{K/(M(K\cap N))}\Isom\,\tilde \xi = 0
    \]
Hence $\tilde e^G_G\Inf_{G/N}^G\tilde \xi\in\underline{S_{\ZZ/m\ZZ, \CC_\xi}}(G)$ in Case (ii) too. The result follows. 
\end{proof}

\begin{notation}
    Let $m\in \NN$ and let $\xi$ be a primitive character of $(\ZZ/m\ZZ)^\times$. Let $n\in\NN_0$ and let $G=\ZZ/mp^n\ZZ$ be the cyclic group of order $m\cdot p^n$. We set $\xi_{m,n,p}=\Inf^{G^\times}_{(\ZZ/m\ZZ)^\times} \xi$ and $\tilde\xi_{m,n,p}:=\tilde e^G_G\Inf^G_{\ZZ/m\ZZ}\tilde\xi$.
\end{notation}
Note that for $g\in G$, one has
\begin{align*}
    \tilde\xi_{m,n,p}(x)=\begin{cases}
        \Inf^G_{\ZZ/m\ZZ}\tilde\xi(x), \quad &\text{if } \langle x\rangle=G\,,\\
        0, \quad &\text{otherwise}. 
    \end{cases}
\end{align*}
Thus, $\tilde\xi_{m,n,p}$ is the Dirichlet character of $G$ induced by $\xi_{m,n,p}$. Since $\xi$ is primitive, it follows that the conductor of $\xi_{m,n,p}$ is equal to $m$. 

\begin{corollary}\label{cor restkerofprimbiset}
    Let $G$ be a finite group, $m$ a positive integer, and $\xi$ a primitive character of $(\ZZ/m\ZZ)^\times$.

    \smallskip
    {\rm (a)} $\underline{S_{\ZZ/m\ZZ,\CC_\xi}}(G)=\{0\}$ unless $G$ is a cyclic group of order $mp^n$ for some integer $n\ge 0$.

    {\rm (b)} If $G$ is a cyclic group of order $mp^n$ for some $n\ge 0$, then $\underline{S_{\ZZ/m\ZZ,\CC_\xi}}(G)$ is one-dimensional generated by $\tilde\xi_{m,n,p}$.
\end{corollary}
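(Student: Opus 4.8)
The plan is to read off both statements directly from the case analysis already carried out in the proof of Lemma~\ref{lem conditionforprimitive}, reorganizing it so that the restriction kernel $\underline{S_{\ZZ/m\ZZ,\CC_\xi}}(G)$ is computed group by group. The starting point is that $S_{\ZZ/m\ZZ,\CC_\xi}$ is generated by $\tilde\xi$, so $\underline{S_{\ZZ/m\ZZ,\CC_\xi}}(G) \subseteq S_{\ZZ/m\ZZ,\CC_\xi}(G)$ is nonzero only if $S_{\ZZ/m\ZZ,\CC_\xi}(G) \ne \{0\}$; by \cite[Corollary~7.4.3]{Bouc2010a} this forces $G$ to be cyclic, say $G = \ZZ/n\ZZ$ with $m \mid n$.

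So assume $G = \ZZ/n\ZZ$ with $m \mid n$ and write $n = m' p^a$ with $m'$ the $p'$-part; the condition $m \mid n$ then says $m_{p'} \mid m'$, and since $m \mid n$ with $m$ possibly containing a $p$-part we need $m \mid m' p^a$. First I would show: if $n$ is not of the form $mp^a$ then $\underline{S_{\ZZ/m\ZZ,\CC_\xi}}(G) = \{0\}$. For this, pick $N \unlhd G$ with $G/N \cong \ZZ/m\ZZ$ and decompose $N = N_p \times N_{p'}$ as in the lemma. Any element of $S_{\ZZ/m\ZZ,\CC_\xi}(G)$ is, modulo $I_G$, a scalar multiple of $\tilde e_G^G \Inf_{G/N}^G \tilde\xi$ (this is exactly the reduction performed in the proof of Lemma~\ref{lem conditionforprimitive}), so it suffices to treat that element. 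If $N_{p'} \ne 1$, the computation in the lemma with $\alpha = \tfrac{1}{m_{G,N_{p'}}}\tilde e_G^G \Inf_{G/N_{p'}}^G \Def_{G/N_{p'}}^G \in I_G$ (using $m_{G,N_{p'}} \ne 0$ for $G$ cyclic, \cite[Proposition~5.6.1]{Bouc2010a}) shows $\tilde e_G^G \Inf_{G/N}^G \tilde\xi = \alpha \cdot \tilde e_G^G \Inf_{G/N}^G \tilde\xi$, hence it is killed by every restriction to a proper subgroup composed with this deflation—more directly, the lemma already records that $\underline{S_{\ZZ/m\ZZ,\CC_\xi}}(G) = 0$ in this case. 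Now $N_{p'} = 1$ means $N$ is a $p$-group, i.e. $G/N$ has the same $p'$-part as $G$; combined with $G/N \cong \ZZ/m\ZZ$ this forces the $p'$-part of $G$ to equal $m_{p'}$ and... in fact $n = mp^a$. Conversely if $n = mp^a$ then such an $N$ with $N_{p'} = 1$ exists (namely $N = G_p / (\ZZ/m_p\ZZ)$), which is the case that can produce a nonzero restriction kernel. This proves (a).

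For (b), assume $G = \ZZ/mp^n\ZZ$. The one-dimensional space $\tilde e_G^G S_{\ZZ/m\ZZ,\CC_\xi}(G)$ is spanned by $\tilde e_G^G \Inf_{G/N}^G \tilde\xi = \tilde\xi_{m,n,p}$ where $N$ is the unique subgroup with $G/N \cong \ZZ/m\ZZ$ (here $N$ is a cyclic $p$-group, possibly trivial), and $(1 - \tilde e_G^G) S_{\ZZ/m\ZZ,\CC_\xi}(G) \subseteq \mathcal{J}S_{\ZZ/m\ZZ,\CC_\xi}(G)$ since $1 - \tilde e_G^G \in I_G$. So on the one hand $\underline{S_{\ZZ/m\ZZ,\CC_\xi}}(G)$, being disjoint from the image part, is contained in $\tilde e_G^G S_{\ZZ/m\ZZ,\CC_\xi}(G)$, hence at most one-dimensional. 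On the other hand, the Case~(i)/Case~(ii) computation at the end of the proof of Lemma~\ref{lem conditionforprimitive} shows precisely that $\tilde\xi_{m,n,p} = \tilde e_G^G \Inf_{G/N}^G \tilde\xi$ lies in $\underline{S_{\ZZ/m\ZZ,\CC_\xi}}(G)$: every deflation $\Def^G_{G/M} \tilde\xi_{m,n,p}$ along a non-trivial $p'$-subgroup $M$ vanishes (since $|G:NM| < |G:N| = m$ makes $\Def^{G/N}_{G/NM}\tilde\xi = 0$ by primitivity, using \cite[Corollary~7.4.3]{Bouc2010a} again), and restriction of $\tilde\xi_{m,n,p}$ to any proper subgroup is zero because its support is $\langle x\rangle = G$; together these say $\tilde\xi_{m,n,p}$ is annihilated by $F(\alpha)$ for every $\alpha \in \KK\BD(H,G)$ with $|H| < |G|$, by the factorization of $p$-bifree bisets into the five elementary maps with $p'$-normal (de)inflation. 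Hence $\underline{S_{\ZZ/m\ZZ,\CC_\xi}}(G) = \KK \tilde\xi_{m,n,p}$.

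The main obstacle is the bookkeeping in part (a): one must be careful that "$G$ cyclic with $S_{\ZZ/m\ZZ,\CC_\xi}(G) \ne 0$ and restriction kernel $\ne 0$" genuinely forces $n = mp^a$ and not merely $n = m' p^a$ with $m_{p'} \mid m' $ and $m' \ne m_{p'}$; this is where the $N_{p'} \ne 1$ computation is essential, as it shows the deflation-then-inflation trick collapses any extra $p'$-part. The rest is a matter of assembling the elementary-biset factorization with the two vanishing facts (restriction to proper subgroups, deflation along $p'$-subgroups) to conclude membership in the restriction kernel, which is routine given Lemma~\ref{lem conditionforprimitive}.
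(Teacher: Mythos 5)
Your overall strategy is exactly the paper's: the printed proof of Corollary~\ref{cor restkerofprimbiset} is a one-line deferral to the proof of Lemma~\ref{lem conditionforprimitive}, and your proposal is essentially a careful unpacking of that case analysis. Parts of your write-up are fine: the $N_{p'}\neq 1$ reduction, the observation that the unique $N\unlhd G$ with $G/N\cong\ZZ/m\ZZ$ is a $p$-group exactly when $|G|=mp^n$, the inclusion $\underline{S}(G)\subseteq \tilde e^G_G S(G)$, and the assembly in (b) combining vanishing of restrictions to proper subgroups with vanishing of deflations along non-trivial $p'$-subgroups via the five-fold factorization of transitive $p$-bifree bisets (note that for such a biset $k_2(L)$ is automatically a $p'$-group, which is what makes this assembly work).

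There is, however, one step that fails as stated: at the start of (a) you claim that $S_{\ZZ/m\ZZ,\CC_\xi}(G)\neq\{0\}$ forces $G$ to be cyclic, citing \cite[Corollary~7.4.3]{Bouc2010a}. That corollary only computes evaluations at \emph{cyclic} groups, and the full evaluation $S_{\ZZ/m\ZZ,\CC_\xi}(G)$ is nonzero for many non-cyclic $G$ (for instance, $\CC R_\CC(G)=\bigoplus_{(m,\xi)}S_{\ZZ/m\ZZ,\CC_\xi}(G)$ has dimension equal to the number of conjugacy classes of $G$, so some summands are nonzero for every $G$). What actually vanishes for non-cyclic $G$ is the \emph{restriction kernel}, and the correct argument is the one at the start of the proof of Lemma~\ref{lem conditionforprimitive}: by Boltje's formula
\[
\chi=\frac{1}{|G|}\sum_{\substack{L\le K\le G\\ K\ \mathrm{cyclic}}}|L|\,\mu(L,K)\,\Ind_L^G\Res^G_L\chi ,
\]
every subgroup $L$ occurring is proper when $G$ is not cyclic, so any $\chi\in\underline{S_{\ZZ/m\ZZ,\CC_\xi}}(G)$ (all of whose restrictions to proper subgroups vanish) must be zero. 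With this substitution your argument goes through; only once $G$ is known to be cyclic does Corollary~7.4.3 enter, to give $m\mid |G|$ and the one-dimensionality used in (b).
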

\begin{proof}
    Both parts follow from the proof of Lemma~\ref{lem conditionforprimitive}. 
\end{proof}

Now we show that the functor $S_{\ZZ/m\ZZ,\CC_\xi}$ is semisimple as a $p$-bifree biset functor.

\begin{theorem}\label{thm decompofsimplebisetfun}
    Let $m$ be a positive integer and let $\xi$ be a primitive character of $(\ZZ/m\ZZ)^\times$. Then
    \begin{align*}
        S_{\ZZ/m\ZZ,\CC_\xi}\cong \bigoplus_{n\in \NN_0} S^\Delta_{\ZZ/mp^n\ZZ,\CC_{\xi_{m,n,p}}}
    \end{align*}
    as $p$-bifree biset functors.
\end{theorem}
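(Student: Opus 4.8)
We want to decompose the biset functor $S_{\ZZ/m\ZZ,\CC_\xi}$, now viewed in the $p$-bifree category, as a direct sum of simple $p$-bifree functors indexed by $n \in \NN_0$. The strategy is the standard one for proving semisimplicity via restriction kernels: first identify the candidate simple pieces, then show they generate subfunctors that exhaust $S_{\ZZ/m\ZZ,\CC_\xi}$ and are mutually independent. By Corollary~\ref{cor restkerofprimbiset}, the restriction kernel $\underline{S_{\ZZ/m\ZZ,\CC_\xi}}(G)$ is nonzero only when $G \cong \ZZ/mp^n\ZZ$, and there it is one-dimensional, generated by $\tilde\xi_{m,n,p}$. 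The group $\Out(\ZZ/mp^n\ZZ) = \Aut(\ZZ/mp^n\ZZ) \cong (\ZZ/mp^n\ZZ)^\times$ acts on this line through the character $\xi_{m,n,p}$ (since $\tilde\xi_{m,n,p}$ is, up to the central idempotent $\tilde e^G_G$, the inflation of the Dirichlet character $\xi_{m,n,p}$, whose conductor is $m$, so this is genuinely a character of the full automorphism group). Hence by the general theory of restriction kernels (Theorem~\ref{thm thecondition} together with Proposition~\ref{prop:condition}-type reasoning, applied here via Lemma~\ref{lem conditionforprimitive}), each simple $p$-bifree functor $S^\Delta_{\ZZ/mp^n\ZZ,\CC_{\xi_{m,n,p}}}$ occurs in $S_{\ZZ/m\ZZ,\CC_\xi}$ with multiplicity exactly one, and no other simple functor occurs.

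First I would record the composition factor list: combining Lemma~\ref{lem conditionforprimitive} (every $\chi$ is, modulo $I_G$, in the restriction kernel) with Corollary~\ref{cor restkerofprimbiset}, the multiplicity of $S^\Delta_{H,V}$ in $S_{\ZZ/m\ZZ,\CC_\xi}$ equals the multiplicity of $V$ in the $\CC\Aut(H)$-module $\underline{S_{\ZZ/m\ZZ,\CC_\xi}}(H)$; this is $1$ precisely when $H = \ZZ/mp^n\ZZ$ and $V = \CC_{\xi_{m,n,p}}$, and $0$ otherwise. So $S_{\ZZ/m\ZZ,\CC_\xi}$ has exactly the composition factors $S^\Delta_{\ZZ/mp^n\ZZ,\CC_{\xi_{m,n,p}}}$, $n \in \NN_0$, each once. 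It then remains to promote this to an actual direct sum decomposition, i.e.\ to prove the functor is semisimple. For this I would argue that there are no nontrivial extensions between the relevant simple functors: the minimal groups $\ZZ/mp^n\ZZ$ are totally ordered by the relation $\gg$ (divisibility of $p$-parts), and a nonsplit extension would force, via evaluation at the larger minimal group, a contradiction with the one-dimensionality and $\Aut$-stability of the restriction kernel established in Corollary~\ref{cor restkerofprimbiset}(b). Concretely, for each $n$ let $F_n$ be the subfunctor of $S_{\ZZ/m\ZZ,\CC_\xi}$ generated by $\tilde\xi_{m,n,p} \in \underline{S_{\ZZ/m\ZZ,\CC_\xi}}(\ZZ/mp^n\ZZ)$; since this generator lies in the restriction kernel, $\ZZ/mp^n\ZZ$ is a minimal group for $F_n$ and $F_n(\ZZ/mp^n\ZZ) \cong \CC_{\xi_{m,n,p}}$ is simple over $\Aut$, so $F_n$ has simple top $S^\Delta_{\ZZ/mp^n\ZZ,\CC_{\xi_{m,n,p}}}$; a dimension or composition-factor count (using that $F_n$ can only involve simples with minimal group of order $\geq mp^n$, yet the only such occurring in $S_{\ZZ/m\ZZ,\CC_\xi}$ with the right $\Aut$-action is $S^\Delta_{\ZZ/mp^n\ZZ,\CC_{\xi_{m,n,p}}}$ itself) forces $F_n \cong S^\Delta_{\ZZ/mp^n\ZZ,\CC_{\xi_{m,n,p}}}$. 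Finally the $F_n$ are independent (their generators sit in restriction kernels at pairwise non-isomorphic groups) and together generate everything, by Lemma~\ref{lem conditionforprimitive} which shows every evaluation is spanned by images, under $p$-bifree morphisms, of restriction-kernel elements.

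A cleaner route, which I would actually prefer to write up, is to invoke the semisimplicity criterion directly: a finite-length functor all of whose composition factors $S^\Delta_{H_i,V_i}$ have the property that the restriction kernel at each $H_i$ already realizes the full isotypic component, and whose minimal groups are linearly ordered, is semisimple. This is essentially the content of the argument in the proof of Proposition~\ref{prop simpleprimitiveBrauer} transplanted to this setting (one restricts to the subcategory $\mathcal{C}_{\ZZ/mp^N\ZZ}$ of subquotients of the largest relevant group, where $p$-bifree functors coincide with ordinary biset functors and one may use Webb's semisimplicity results, or one simply notes that $\CC R_\CC$ restricted to cyclic groups is semisimple and $S_{\ZZ/m\ZZ,\CC_\xi}$ is a summand of it). Combined with the multiplicity-one count above, this yields the claimed isomorphism.

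The main obstacle is the semisimplicity itself — proving there are no nontrivial self-extensions among the $S^\Delta_{\ZZ/mp^n\ZZ,\CC_{\xi_{m,n,p}}}$ inside $S_{\ZZ/m\ZZ,\CC_\xi}$. The composition-factor bookkeeping is routine given Lemma~\ref{lem conditionforprimitive} and Corollary~\ref{cor restkerofprimbiset}; the work is in ensuring the extension splits, for which the key leverage is that all the data lives over cyclic groups, where the relevant functor categories are semisimple. I would also take care with one technical point: verifying that $\xi_{m,n,p}$ really is a well-defined character of $\Aut(\ZZ/mp^n\ZZ)$ with the stated conductor $m$, so that $\CC_{\xi_{m,n,p}}$ is a genuine simple $\CC\Out(\ZZ/mp^n\ZZ)$-module and the labelling of the simple $p$-bifree functors on the right-hand side is correct — but this is exactly what the remark following the Notation established.
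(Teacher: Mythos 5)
Your overall strategy is the same as the paper's: count composition factors via Lemma~\ref{lem conditionforprimitive}, Corollary~\ref{cor restkerofprimbiset} and Theorem~\ref{thm thecondition} (each $S^\Delta_{\ZZ/mp^n\ZZ,\CC_{\xi_{m,n,p}}}$ occurs exactly once and nothing else occurs), then realize each factor as the subfunctor $F_n$ generated by $\tilde\xi_{m,n,p}$ and show the $F_n$ are simple. That bookkeeping is correct. The gap is in the step where you conclude that $F_n$ is simple. Your ``composition-factor count'' asserts that the only simple occurring in $S_{\ZZ/m\ZZ,\CC_\xi}$ with minimal group of order at least $mp^n$ ``with the right $\Aut$-action'' is $S^\Delta_{\ZZ/mp^n\ZZ,\CC_{\xi_{m,n,p}}}$; this is false, since every $S^\Delta_{\ZZ/mp^{n+k}\ZZ,\CC_{\xi_{m,n+k,p}}}$ with $k\ge 1$ also has minimal group of order $\ge mp^n$, and the $\Aut$-action comparison is vacuous across non-isomorphic groups. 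Multiplicity-one does not exclude the scenario $F_{n+k}\subsetneq F_n$ with $F_n/F_{n+k}$ having top $S^\Delta_{\ZZ/mp^n\ZZ,\CC_{\xi_{m,n,p}}}$, which is exactly the nonsplit extension you need to rule out. The paper does this by an explicit computation: because $\tilde\xi_{m,n,p}$ lies in the restriction kernel and $|G_{n+k}|/|G_n|=p^k$, every $p$-bifree morphism $G_n\to G_{n+k}$ not killing it factors as $\Ind^{G_{n+k}}_{G_n}$ up to automorphisms, so $F_n(G_{n+k})$ is one-dimensional spanned by $\Ind^{G_{n+k}}_{G_n}\tilde\xi_{m,n,p}$ and hence annihilated by $\tilde e^{G_{n+k}}_{G_{n+k}}$, whereas $\tilde e^{G_{n+k}}_{G_{n+k}}F_{n+k}(G_{n+k})=F_{n+k}(G_{n+k})\ne 0$; therefore $F_{n+k}\not\subseteq F_n$. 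Some such concrete argument is needed and is missing from your write-up.

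Your ``cleaner route'' does not repair this. Restricting to the subcategory $\calC_{\ZZ/mp^N\ZZ}$ of subquotients of $\ZZ/mp^N\ZZ$ does \emph{not} put you in a setting where $p$-bifree biset functors coincide with biset functors: that identification in the proof of Proposition~\ref{prop simpleprimitiveBrauer} relies on all subquotients being $p'$-groups, and here $\ZZ/mp^N\ZZ$ has nontrivial normal $p$-subgroups whose deflations are excluded from $\calC^\Delta$. Likewise, ``$\CC R_\CC$ restricted to cyclic groups is semisimple'' is a statement about biset functors, not $p$-bifree ones; semisimplicity of the restriction in the $p$-bifree sense is essentially what is being proved. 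Indeed, the content of the theorem is precisely that the functor $S_{\ZZ/m\ZZ,\CC_\xi}$, simple over the biset category of cyclic groups, decomposes nontrivially over the $p$-bifree category, so no semisimplicity can be imported from the biset setting without the extra computation above. Also note there is no ``largest relevant group'': $n$ ranges over all of $\NN_0$, so any argument confined to a single $\calC_{\ZZ/mp^N\ZZ}$ sees only finitely many factors.
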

\begin{proof}
    As before we regard $S:=S_{\ZZ/m\ZZ,\xi}$ as a subfunctor of $\CC R_\CC$. By Lemma~\ref{lem conditionforprimitive}, Corollary~\ref{cor restkerofprimbiset} and Theorem~\ref{thm thecondition}, the composition factors of $S$ are exactly the functors $S^\Delta_{\ZZ/mp^n\ZZ,\CC_{\xi_{m,n,p}}}$ each with multiplicity one. We will show that each of these functors appears as a subfunctor. Let $F$ be a $p$-bifree biset subfunctor of $S$ and let $G$ be a minimal group for $F$. Since $F(K)=0$ for any group $K$ with $|K|<|G|$, we have $0\neq F(G)\subseteq \underline{S}(G)$. By Corollary~\ref{cor restkerofprimbiset}, it follows that $G$ is a cyclic group of order $m\cdot p^n$ for some natural number $n$, and that $F(G)$ is one-dimensional generated by $\tilde\xi_{m,n,p}=\tilde{e}^G_G\Inf^G_{\ZZ/m\ZZ}\xi$. Conversely, if $G$ is a cyclic group of order $m\cdot p^n$, then $\underline{S(G)}\neq 0$ and the subfunctor generated by $\underline{S}(G)$ has a minimal group $G$.

    For any natural number $n\ge 0$, let $G_n=\ZZ/mp^n\ZZ$ be a cyclic group of order $m\cdot p^n$ and let $F_n$ be the subfunctor of $S$ generated by $\tilde\xi_{m,n,p}$. Then by discussions above $F_n$ has a minimal group $G_n$. We claim that $F_n$ is a simple $p$-bifree biset functor. It is sufficient to show that $F_{n+k}\not\subseteq F_n$ for any $k\ge 1$. Indeed, if $F_n$ has a non-zero subfunctor $M$, then the minimal group of $M$ is $G_{n+k}$ for some $k$ and by definition $F_{n+k}\subseteq M$. 

    Let $k\ge 1$. Since $|G_{n+k}|/|G_n|$ is a $p$-power, the space $\CC\BD(G_{n+k},G_n)\circ F_n(G_n)$ is one-dimensional, generated by $\Ind^{G_{n+k}}_{G_n} \tilde\xi_{m,n,p}$. In particular, $\tilde{e}^{G_{n+k}}_{G_{n+k}}F_n(G_{n+k})=0$. But, $\tilde{e}^{G_{n+k}}_{G_{n+k}}F_{n+k}(G_{n+k})=F_{n+k}(G_{n+k})\neq 0$. Hence, $F_{n+k}\not\subseteq F_n$, as required. This shows that $F_n$ is simple. Moreover, for any $x\in (\ZZ/mp^n)^\times$, we have $x\cdot\tilde\xi_{m,n,p}=\xi_{m,n,p}(x)\tilde\xi_{m,n,p}$ which implies $F_n(G_n)\cong \CC_{\xi_{m,n,p}}$. This shows that $F_n\cong S^\Delta_{G_n, \CC_{\xi_{m,n,p}}}$ and theorem is proved.
\end{proof}

These calculations show that $\CC R_\CC$ decomposes into a direct sum of simple $p$-bifree functors indexed by pairs  $(m, \xi)$, where $m$ is a positive integer and $\xi$ is an irreducible character of $(\mathbb{Z}/m\mathbb{Z})^\times$ with primitive $p'$-part.

\begin{corollary}\label{cor complexcharacter}
    One has
    \begin{align*}
        \CC R_\CC\cong \bigoplus_{(m,\xi)} S^\Delta_{\ZZ/m\ZZ, \CC_\xi}
    \end{align*}
    where $(m,\xi)$ runs through the set of pairs consisting of a positive integer $m$ and a character $\xi$ of $(\ZZ/m\ZZ)^\times$ such that $\xi_{p'}$ is primitive. 
\end{corollary}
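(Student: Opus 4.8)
The plan is to obtain the statement by chaining together two decompositions already established in the excerpt: the classical splitting of $\CC R_\CC$ into simple \emph{biset} functors recorded in Remark~\ref{rem directsumcharacterfunctor}, and the further splitting of each such summand into simple \emph{$p$-bifree} biset functors proved in Theorem~\ref{thm decompofsimplebisetfun}. Once these are combined, the only remaining task is a bijective re-indexing of the resulting double sum, which is a statement about Dirichlet characters and their conductors.

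Concretely, I would first invoke Remark~\ref{rem directsumcharacterfunctor} to get, after restriction to $R\calC^\Delta$, an isomorphism of $p$-bifree biset functors
\[
\CC R_\CC\;\cong\;\bigoplus_{(m,\xi)} S_{\ZZ/m\ZZ,\CC_\xi}\,,
\]
where $(m,\xi)$ runs over pairs with $\xi$ a \emph{primitive} character of $(\ZZ/m\ZZ)^\times$. Applying Theorem~\ref{thm decompofsimplebisetfun} to each summand and regrouping, $\CC R_\CC$ becomes the direct sum of the simple $p$-bifree functors $S^\Delta_{\ZZ/mp^n\ZZ,\CC_{\xi_{m,n,p}}}$ over all triples $(m,\xi,n)$ with $\xi$ primitive of conductor $m$ and $n\ge 0$. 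It then remains to show that the map $(m,\xi,n)\mapsto(m',\xi'):=(mp^n,\xi_{m,n,p})$ is a bijection onto the set of pairs $(m',\xi')$ appearing in the statement, i.e.\ those for which the $p'$-part $\xi'_{p'}$ is primitive.

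The crux of this re-indexing is the conductor computation $\mathrm{cond}(\xi_{m,n,p})=m$. Writing $m=m_{p'}m_p$ with $m_p$ the $p$-part and using the corresponding splitting $\xi=\eta\times\zeta$ of $\xi$ over these coprime moduli, primitivity of $\xi$ says $\eta$ and $\zeta$ are primitive of conductors $m_{p'}$ and $m_p$; the $p'$-factor is unchanged by inflation while the inflated $p$-factor still has conductor exactly $m_p$, so the product has conductor $m_{p'}m_p=m$. Hence $m'/\mathrm{cond}(\xi')=p^n$, and conversely for an arbitrary pair $(m',\xi')$ one sees, from the analogous splitting $\xi'=\xi'_{p'}\times\xi'_p$, that $m'/\mathrm{cond}(\xi')$ is a power of $p$ precisely when $\mathrm{cond}(\xi'_{p'})=m'_{p'}$, that is, precisely when $\xi'_{p'}$ is primitive. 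The inverse map sends $(m',\xi')$ to the triple $\bigl(\mathrm{cond}(\xi'),\,\psi,\,v_p(m'/\mathrm{cond}(\xi'))\bigr)$, with $\psi$ the unique primitive character of $(\ZZ/\mathrm{cond}(\xi')\ZZ)^\times$ that $\xi'$ inflates from, and injectivity of the forward map is immediate because $\xi_{m,n,p}$ already pins down its conductor $m$, hence $n=v_p(m'/m)$, hence $\xi$. Since Theorem~\ref{thm decompofsimplebisetfun} records each $S^\Delta_{\ZZ/mp^n\ZZ,\CC_{\xi_{m,n,p}}}$ with multiplicity one and the re-indexing is a bijection, the stated direct sum decomposition follows.

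I expect the only obstacle to be careful bookkeeping: keeping the two characterizations ``$\xi'_{p'}$ primitive'' and ``$m'/\mathrm{cond}(\xi')$ is a $p$-power'' aligned throughout, and verifying the conductor identity in the boundary cases $p\mid m$, $m=1$, and $n=0$. No new functorial ingredient is needed beyond Remark~\ref{rem directsumcharacterfunctor} and Theorem~\ref{thm decompofsimplebisetfun}.
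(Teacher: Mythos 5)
Your proposal is correct and follows exactly the paper's own route: combine the biset-functor decomposition of Remark~\ref{rem directsumcharacterfunctor} with Theorem~\ref{thm decompofsimplebisetfun} and re-index the resulting double sum. The paper compresses the final re-indexing into ``the result follows'' after noting that the condition on $(m,\xi)$ is that $m/\mathrm{cond}(\xi)$ be a $p$-power; your conductor computation showing this is equivalent to primitivity of $\xi_{p'}$ is precisely the omitted bookkeeping, carried out correctly.
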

\begin{proof}
    By Remark~\ref{rem directsumcharacterfunctor} and Theorem~\ref{thm decompofsimplebisetfun}, we have
   \begin{align*}
       \CC R_\CC\cong \bigoplus_{(k,\chi)}\bigoplus_{n\in \NN_0} S^\Delta_{\ZZ/kp^n\ZZ, \CC_{\chi_{m,n,p}}}
   \end{align*} 
   where $(k,\chi)$ runs through the set of pairs consisting of a positive integer $k$ and a primitive character $\chi$ of $(\ZZ/k\ZZ)^\times$. Therefore,
      \begin{align*}
       \CC R_\CC\cong \bigoplus_{m\in\NN}\bigoplus_{\xi} S^\Delta_{\ZZ/m\ZZ, \CC_\xi}
   \end{align*} 
   where $\xi$ runs through the characters of $(\ZZ/m\ZZ)^\times$ such that the quotient of $m$ by the conductor of $\xi$ is a $p$-power. The result follows.
\end{proof}

\begin{Remark}
    Given finite groups $G$ and $H$, we can choose a $p$-modular system $(\KK,\calO,k)$, where $\calO$ is a complete discrete valuation ring, $\KK$ is the quotient field of $\calO$ of characteristic zero containing a primitive $\exp (G\times H)$-th root of unity and $k$ is the residue field of $\calO$ which is algebraically closed. We also choose an embedding $\KK \to \CC$. Then, by Proposition~4.17.4 in \cite{Linckelmann2018a}, for any $X\in \CC \BD(H,G)$, the diagram 
    
    \begin{center}
\begin{tikzcd} 
  \CC R_\CC(G) \arrow{r}{d_G}\arrow{d}[swap]{\CC R_\CC(X)} 
  & \CC R_k(G) \arrow{d}{\CC R_k(X)} 
  \\
  \CC R_\CC(H) \arrow{r}[swap]{d_H} 
  & \CC R_k(H) 
\end{tikzcd}
\end{center}
where $d_G$ and $d_H$ denote the decomposition maps, commute. This way we obtain a natural transformation $d: \CC R_\CC\to \CC R_k$. By Corollaries~\ref{cor Brauercharacter} and \ref{cor complexcharacter}, we obtain a short exact sequence
\begin{align*}
    0\longrightarrow \bigoplus_{(m,\xi)} S^\Delta_{\ZZ/m\ZZ,\CC_\xi}\longrightarrow \CC R_\CC \longrightarrow \CC R_k\longrightarrow 0
\end{align*}
of $p$-bifree biset functors, where $(m,\xi)$ runs through the set of pairs consisting of a positive integer $m$ divisible by $p$ and an irreducible character $\xi$ of $(\ZZ/m\ZZ)^\times$ with primitive $p'$-part.
\end{Remark}

\section{Further examples}
In this section, we analyze how certain classical simple biset functors behave under restriction to the $p$-bifree biset category. Our focus is on the simple functors $S_{C_p \times C_p, \CC}$ and $S_{C_q \times C_q, \CC}$, for a prime number $q$ with $q\neq p$, whose evaluations are closely related to the structure of the Dade group (see \cite[Chapters~11--12]{Bouc2010a}). It turns out that both functors decompose as direct sums of simple $p$-bifree biset functors indexed by specific families of $p$-groups. 

%Let $q$ be a prime number with $q\neq p$. In this section, we consider the simple biset functors $S_{C_p\times C_p, \CC}$ and $S_{C_q\times C_q,\CC}$ as $p$-bifree biset functors.

We begin with the case of $S_{C_p \times C_p, \CC}$, the simple biset functor associated to the elementary abelian $p$-group of rank two. We determine the groups $G$ for which its restriction kernel evaluates non-trivially in the $p$-bifree category and compute it when non-zero.

\begin{proposition}\label{prop restkerelemabelian}
    {\rm (a)} Let $G$ be a finite group. One has $\underline{S_{C_p\times C_p,\CC}}(G)=\{0\}$ unless $G$ is a non-cyclic $p$-group.

    \smallskip
    {\rm (b)} Let $G$ be a non-cyclic $p$-group. Then $\underline{S_{C_p\times C_p,\CC}}(G)$ is one-dimensional generated by the image of $e^G_G$ in $S_{C_p\times C_p,\CC}(G)$.
\end{proposition}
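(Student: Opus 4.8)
The plan is to mimic the strategy used for the Burnside functor (Theorem~\ref{thm dimensionofsimples} and Proposition~\ref{prop basisofbeGH}), exploiting the fact that $S_{C_p\times C_p,\CC}$, as a classical biset functor, is well understood: by Bouc's work (see \cite[Chapter~6]{Bouc2010a}) its minimal group is $C_p\times C_p$, its evaluation $S_{C_p\times C_p,\CC}(G)$ is nonzero exactly when $G$ has a subquotient isomorphic to $C_p\times C_p$, and more precisely $S_{C_p\times C_p,\CC}(G)$ has a basis indexed by certain sections of $G$. What changes in the $p$-bifree setting is only that the restriction kernel $\underline{S_{C_p\times C_p,\CC}}(G)$ is now the intersection of kernels of $F(\alpha)$ taken over $\alpha\in\CC B^\Delta(H,G)$ with $|H|<|G|$, that is, over the \emph{restricted} set of elementary operations: restrictions, isogations, and inflations/deflations along $p'$-normal subgroups only. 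So the restriction kernel can only get larger than the classical one, and the task is to identify it.

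For part (a), first suppose $G$ is not a $p$-group. I would factor a general morphism in $\CC B^\Delta(H,G)$ through its elementary constituents and use a two-step reduction. If $G$ has a nontrivial normal $p'$-subgroup $N$, then $\Def^G_{G/N}\in\CC B^\Delta(G/N,G)$ is available, and since $S_{C_p\times C_p,\CC}$ is a biset functor one can compare the deflation with the inflation going the other way; the composite $\Inf\circ\Def$ lies in the ideal $I_G$-type kernel and, using that $S_{C_p\times C_p,\CC}$ is generated by $e^{C_p\times C_p}_{C_p\times C_p}$ and the explicit action formulae from \cite[Chapter~6]{Bouc2010a}, one checks that deflation along $N$ is injective on $S_{C_p\times C_p,\CC}(G)$, forcing $\underline{S_{C_p\times C_p,\CC}}(G)=0$. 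If $G$ is not a $p$-group but $O_{p'}(G)=1$, then $G$ has a proper subgroup containing a Sylow $p$-subgroup (e.g.\ $N_G(P)$ for $P\in\mathrm{Syl}_p(G)$, or one argues directly that every generator of the relevant section survives restriction to some proper subgroup carrying a copy of $C_p\times C_p$), and restriction — which is always an allowed $p$-bifree operation — detects every nonzero element. Either way $\underline{S_{C_p\times C_p,\CC}}(G)=0$ when $G$ is not a $p$-group. When $G$ is a nontrivial \emph{cyclic} $p$-group, $S_{C_p\times C_p,\CC}(G)=0$ outright since $G$ has no $C_p\times C_p$-subquotient. This proves (a).

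For part (b), let $G$ be a non-cyclic $p$-group. Here the point is that for a $p$-group the only $p'$-normal subgroup is trivial, so the allowed $p$-bifree operations out of $G$ to smaller groups are exactly restrictions to proper subgroups and isogations — there are no deflations. Thus $\underline{S_{C_p\times C_p,\CC}}(G)$ coincides with $\bigcap_{H<G}\ker(\Res^G_H)$, the ordinary restriction kernel within the classical biset functor $S_{C_p\times C_p,\CC}$ restricted to the subgroup category. By the known structure of $S_{C_p\times C_p,\CC}$ on $p$-groups (\cite[Chapter~6]{Bouc2010a}; this is precisely the content computed in Bouc's analysis of the Dade group functor), this intersection is one-dimensional for every non-cyclic $p$-group $G$, spanned by the class of the "top" basis element, which under the identification is the image of the primitive idempotent $e^G_G$ (equivalently the characteristic function supported on generators). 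I would cite the relevant statement from \cite[Chapter~6]{Bouc2010a} for the dimension count and the identification of the generator, and verify the $\Aut(G)$-triviality of this line — needed later for Theorems~\ref{thm elemabelian} — from the fact that $e^G_G$ is fixed by every automorphism of $G$.

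The main obstacle is the case in part (a) where $G$ is not a $p$-group and $O_{p'}(G)\neq 1$: one must show that deflation along the $p'$-radical is genuinely injective on $S_{C_p\times C_p,\CC}(G)$, which requires the explicit description of $S_{C_p\times C_p,\CC}(G)$ as a biset functor (its basis in terms of sections $(T,S)$ with $T/S\cong C_p\times C_p$) and a careful bookkeeping of how $\Def^G_{G/N}$ permutes/annihilates these basis vectors — annihilation would only occur for sections not surviving the quotient, but every such section has a $p$-group denominator and $N$ is a $p'$-group, so no section is killed. Making this bookkeeping precise, rather than hand-waving via "the same argument as Burnside", is where the real work lies.
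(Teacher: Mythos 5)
Your part (b) is essentially the paper's argument: for a $p$-group every $p'$-normal subgroup of every subgroup is trivial, so the $p$-bifree restriction kernel reduces to $\bigcap_{H<G}\ker(\Res^G_H)=\tilde e^G_G S_{C_p\times C_p,\CC}(G)$, which Bouc's description of $S_{C_p\times C_p,\CC}$ identifies as the line spanned by the image of $e^G_G$ when $G$ is non-cyclic (and this line is visibly $\Aut(G)$-invariant).

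Part (a), however, has a genuine gap. Both branches of your case analysis for a non-$p$-group $G$ rest on unproved claims. When $O_{p'}(G)\neq 1$ you assert that $\Def^G_{G/N}$ is \emph{injective} on all of $S_{C_p\times C_p,\CC}(G)$; this is far stronger than what is needed, it is not delivered by the ``no section is killed'' heuristic (deflation acts on the section basis through deflation numbers which can vanish, so injectivity requires a genuine computation), and you yourself flag this bookkeeping as ``where the real work lies.'' When $O_{p'}(G)=1$ but $G$ is not a $p$-group, the claim that ``restriction detects every nonzero element'' is simply asserted. The paper avoids both issues with one observation you are missing: for any $p$-bifree functor $S$ one has $\underline{S}(G)\subseteq\tilde e^G_G S(G)$, since $1-\tilde e^G_G$ is a linear combination of $\Ind_K^G\Res^G_K$ over proper subgroups $K$; and by \cite[Corollary~3.6]{Bouc2023} the space $\tilde e^G_G S_{C_p\times C_p,\CC}(G)$ vanishes unless $G$ is $p$-elementary, in which case it is spanned by the image of $e^G_G$. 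This reduces part (a) to the single computation $\Def^G_{G/C}e^G_G=m_{C,C}\,e^P_P\times e^{C/C}_{C/C}\neq 0$ for $G=P\times C$ with $C\neq 1$ the cyclic $p'$-part, using that $m_{C,C}\neq 0$ for $C$ cyclic. Without this idempotent reduction (or a completed version of your deflation bookkeeping), part (a) is not proved.
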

\begin{proof}
    First note that for any $p$-bifree biset functor $S$ and finite group $G$, one has $\underline{S}(G)\subseteq \tilde e^G_G S(G)$. Indeed, one has $S(G)=\tilde e^G_G S(G)\oplus (1-\tilde e^G_G) S(G)$ and $(1-\tilde e^G_G)S(G)\cap \underline{S}(G)=\{0\}$.

    Now note that by \cite[Corollary~3.6]{Bouc2023}, $\tilde e^G_G S_{C_p\times C_p,\CC}(G)$ is non-zero if and only if $G$ is $p$-elementary. Let $\underline{\be}_{C_p\times C_p}$ and $\underline{\textbf{j}}_{C_p\times C_p}$ be biset subfunctors of the Burnside ring functor introduced in \cite[Section~5]{Bouc2010a}, similar to the functors $\be_{C_p\times C_p}$ and $\textbf{j}_{C_p\times C_p}$ in Section~4. By the proof of \cite[Theorem~5.5.4]{Bouc2010a}, the functor $S_{C_p\times C_p,\CC}$ is isomorphic as biset functors to the quotient $\underline{\be}_{C_p\times C_p}/\underline{\textbf{j}}_{C_p\times C_p}$. Furthermore, $\tilde e^G_GS_{C_p\times C_p,\CC}(G)$ is generated by the image $\overline{e^G_G}$ in $S_{C_p\times C_p,\CC}(G)$ of $e^G_G$ if $G$ is not cyclic and is equal to zero if $G$ is cyclic. 

    Suppose that $G=P\times C$ is a non-cyclic $p$-elementary group where $P=O_p(G)$. Then $e^G_G=e^P_P\times e^C_C$. Also if $N\unlhd G$ is of $p'$-order, then $G/N\cong P\times C/N$ and
    \begin{align*}
        \Def^G_{G/N}e^G_G=e^P_P\times \Def^C_{C/N}e^C_C\,.
    \end{align*}
    Since $C$ is a cyclic group, $m_{C,C}\neq 0$ and hence
        \begin{align*}
        \Def^G_{G/C}e^G_G=m_{C,C} e^P_P\times e^{C/C}_{C/C}\neq 0\,.
    \end{align*}
    This shows that $\underline{S_{C_p\times C_p,\CC}}(G)=\{0\}$ unless $C=1$. On the other hand if $C=1$, then $\underline{S_{C_p\times C_p,\CC}}(G)=\tilde e^G_GS_{C_p\times C_p,\CC}(G)$ is one-dimensional generated by $\overline{e^G_G}$. This proves both parts.
\end{proof}

\begin{proposition}\label{prop conditionelemabelian}
    Let $G$ be a finite group. Then for any $x\in S_{C_p\times C_p,\CC}(G)$, there exists $x'\in S_{C_p\times C_p,\CC}(G)$ and $\alpha\in I_G$ such that $x=x'+\alpha\cdot x$.
\end{proposition}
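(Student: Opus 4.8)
The plan is to mimic the structure of the proof of Lemma~\ref{lem conditionforprimitive} (and of Proposition~\ref{prop:condition}), using an induction-type decomposition of the identity endomorphism of $G$ coming from characters of cyclic subgroups. The key input should be the same ``Artin-type'' formula: for any finite group $G$ and any $\chi$ in a suitable representation ring, one has
\[
\chi = \frac{1}{|G|}\sum_{\substack{L\le K\le G\\ K\ \mathrm{cyclic}}}|L|\mu(L,K)\,\ind_L^G\res^G_L\chi\,.
\]
Since $S_{C_p\times C_p,\CC}$ is a subquotient of the Burnside functor $\CC B$ (it is a subquotient of $\underline{\be}_{C_p\times C_p}/\underline{\textbf{j}}_{C_p\times C_p}$ by the proof of \cite[Theorem~5.5.4]{Bouc2010a}), one has at the level of the biset category the identity $\id_G=\frac{1}{|G|}\sum_{L\le K\le G,\ K\ \mathrm{cyclic}}|L|\mu(L,K)\,\Ind^G_L\Res^G_L$ in $\CC B(G,G)$, and this acts as the identity on $S_{C_p\times C_p,\CC}(G)$. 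The summands with $L$ a proper subgroup of $G$ lie in $I_G$ as $\big[\frac{G\times G}{\Delta(L)}\big]\in \CC B^\Delta(G,L)\circ \CC B^\Delta(L,G)$. So, exactly as in the cited lemma, if $G$ is not cyclic the whole sum lies in $I_G$ and we may take $x'=0$ and $\alpha$ the corresponding element of $I_G$; and if $G$ is cyclic then only the term $L=K=G$ survives, which is the Burnside idempotent $e^G_G$ up to lower-order terms, i.e.\ up to an element of $I_G$.

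The remaining case is therefore $G$ cyclic. Here I would split $x = \tilde e^G_G x + (1-\tilde e^G_G)x$ with $1-\tilde e^G_G\in I_G$, so it suffices to treat $\tilde e^G_G x$. By Proposition~\ref{prop restkerelemabelian}, $\tilde e^G_G S_{C_p\times C_p,\CC}(G)=0$ whenever $G$ is cyclic (since a cyclic group is never a non-cyclic $p$-group, and for cyclic groups the space $\tilde e^G_G S_{C_p\times C_p,\CC}(G)$ was shown to vanish in the proof of Proposition~\ref{prop restkerelemabelian}). Hence $\tilde e^G_G x = 0$ and we may take $x'=0$, $\alpha=1-\tilde e^G_G\in I_G$. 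Combining the two cases gives the statement, with $x'$ always lying in $\underline{S_{C_p\times C_p,\CC}}(G)\subseteq S_{C_p\times C_p,\CC}(G)$ (indeed $x'=0$ throughout, which trivially lies in the restriction kernel).

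The main obstacle I anticipate is bookkeeping rather than conceptual: one must be careful that the Artin-type identity can legitimately be transported from $\CC R_\CC$ or $\CC B$ to the subquotient functor $S_{C_p\times C_p,\CC}$, i.e.\ that the relevant elementary bisets $\Ind,\Res$ (and the diagonal bisets $\big[\frac{G\times G}{\Delta(L)}\big]$) are $p$-bifree, which they are since $\Delta(L)$ has trivial, hence $p'$, left and right stabilizers, and $\Ind^G_L,\Res^G_L$ always lie in $\CC B^\Delta$. One also needs that the decomposition of $\id_G$ holds in the $p$-bifree endomorphism ring $\CC B^\Delta(G,G)$, not merely in $\CC B(G,G)$; but every biset appearing has $p'$-stabilizers on both sides, so this is automatic. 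With these checks in place the argument is a direct adaptation of Lemma~\ref{lem conditionforprimitive}, and in fact somewhat simpler because the relevant restriction-kernel contribution on cyclic groups vanishes outright.
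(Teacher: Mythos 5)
There is a genuine gap, and it sits at the heart of your argument. The claimed identity
\[
\id_G=\frac{1}{|G|}\sum_{\substack{L\le K\le G\\ K\ \mathrm{cyclic}}}|L|\mu(L,K)\,\Ind^G_L\Res^G_L
\]
does \emph{not} hold in $\CC B(G,G)$, and it does not act as the identity on $S_{C_p\times C_p,\CC}(G)$. It is an identity of operators on the character ring $\CC R_\CC(G)$ only (Artin induction with Boltje's explicit coefficients); since the action map $\CC B(G,G)\to\End_\CC(\CC R_\CC(G))$ is far from injective, the identity cannot be transported to other functors, and in particular not to a subquotient of the Burnside functor. Concretely, take $G=C_p\times C_p$. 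Every term of the sum factors through a proper (cyclic) subgroup $L<G$, and since $C_p\times C_p$ is the minimal group of $S_{C_p\times C_p,\CC}$, restriction to any proper subgroup kills $S_{C_p\times C_p,\CC}(C_p\times C_p)=\CC\neq 0$. Hence the sum acts as $0$, not as $\id$, and your conclusion $x=\alpha\cdot x$ with $\alpha\in I_G$ would force $x\in\mathcal{J}S_{C_p\times C_p,\CC}(C_p\times C_p)=0$, i.e.\ it would "prove" that the evaluation at the minimal group vanishes. The same failure occurs at every non-cyclic $p$-group, which is precisely where a non-zero $x'$ in the restriction kernel is unavoidable; your final assertion that $x'=0$ throughout therefore cannot be correct.

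The paper's proof avoids Artin induction entirely: in \emph{all} cases it writes $x=\tilde e^G_G x+(1-\tilde e^G_G)x$ with $1-\tilde e^G_G\in I_G$, and then invokes (the proof of) Proposition~\ref{prop restkerelemabelian} to see that $\tilde e^G_G S_{C_p\times C_p,\CC}(G)$ either vanishes (so $x'=0$ works) or, when $G$ is a non-cyclic $p$-group, equals $\underline{S_{C_p\times C_p,\CC}}(G)$, in which case one takes $x'=\tilde e^G_G\cdot x$. Your treatment of the cyclic case already coincides with this; the fix is to use the same idempotent decomposition for non-cyclic $G$ as well, together with the computation of $\tilde e^G_G S_{C_p\times C_p,\CC}(G)$ from Proposition~\ref{prop restkerelemabelian}, rather than an induction formula borrowed from the character ring.
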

\begin{proof}
    Let $x\in S_{C_p\times C_p,\CC}(G)$. By the proof of Proposition~\ref{prop restkerelemabelian}, we have $\tilde e^G_G S_{C_p\times C_p,\CC}(G)=\{0\}$ if $G$ is not a non-cyclic $p$-group. In this case, we have $x'=0$ and $\alpha=1-\tilde e^G_G$. If $G$ is a non-cyclic $p$-group, then again by the proof of Proposition~\ref{prop restkerelemabelian}, we have $\underline{S_{C_p\times C_p,\CC}}(G)=\tilde e^G_GS_{C_p\times C_p,\CC}(G)$. Thus, in this case, we choose $x'=\tilde e^G_G\cdot x'$ and $\alpha=1-\tilde e^G_G$.
\end{proof}

These two results allow us to determine the full decomposition of $S_{C_p \times C_p, \CC}$ as a $p$-bifree biset functor. It splits as a direct sum of simple functors indexed by non-cyclic $p$-groups, each occurring with multiplicity one.
\begin{theorem}\label{thm elemabelian}
    One has an isomorphism
    \begin{align*}
        S_{C_p\times C_p,\CC}\cong \bigoplus_{P: \text{non-cyclic } p\text{-group}} S^\Delta_{P,\CC}
    \end{align*}
    of $p$-bifree biset functors.
\end{theorem}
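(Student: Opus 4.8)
The plan is to combine the two preceding propositions with the general theory of restriction kernels from the appendix (Theorem~\ref{thm thecondition}) to first pin down the composition factors of $S_{C_p\times C_p,\CC}$ as a $p$-bifree biset functor, and then upgrade "composition factors" to a direct sum decomposition by producing each simple as an honest subfunctor. By Proposition~\ref{prop restkerelemabelian}, the restriction kernel $\underline{S_{C_p\times C_p,\CC}}(G)$ vanishes unless $G$ is a non-cyclic $p$-group, in which case it is one-dimensional, carrying the trivial $\KK\Out(G)$-action (it is spanned by the canonical element $\overline{e^G_G}$, which is fixed by all automorphisms). Feeding this together with Proposition~\ref{prop conditionelemabelian} into Theorem~\ref{thm thecondition} shows that the composition factors of $S_{C_p\times C_p,\CC}$ are exactly the functors $S^\Delta_{P,\CC}$ with $P$ a non-cyclic $p$-group, each with multiplicity one.

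Next I would show each $S^\Delta_{P,\CC}$ actually occurs as a subfunctor, which forces the extensions to split and yields the asserted direct sum. Fix a non-cyclic $p$-group $P$ and let $F_P$ be the $p$-bifree biset subfunctor of $S_{C_p\times C_p,\CC}$ generated by the one-dimensional space $\underline{S_{C_p\times C_p,\CC}}(P) = \KK\,\overline{e^P_P}$. By construction $P$ is a minimal group for $F_P$ and $F_P(P)\cong\CC$ as $\KK\Out(P)$-module, so the unique maximal subfunctor of $F_P$ has $S^\Delta_{P,\CC}$ as quotient; it remains to see $F_P$ is itself simple. For this it suffices to check that no $S^\Delta_{Q,\CC}$ with $|Q|>|P|$ ($Q$ non-cyclic $p$-group) occurs in $F_P$; equivalently, that $F_P(Q) = 0$ for any such $Q$ that would be a minimal group of a nonzero subfunctor. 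This one argues exactly as in the proof of Theorem~\ref{thm decompofsimplebisetfun}: any element of $F_P(Q)$ is an image of $\overline{e^P_P}$ under morphisms factoring through $P$, and since $|Q|/|P|$ need not be a $p$-power the relevant transit map lands in $(1-\tilde e^Q_Q)S_{C_p\times C_p,\CC}(Q)$, so $\tilde e^Q_Q F_P(Q) = 0$; but a minimal group $Q$ of a subfunctor must satisfy $F_P(Q)\subseteq\underline{S_{C_p\times C_p,\CC}}(Q) = \tilde e^Q_Q S_{C_p\times C_p,\CC}(Q)$, forcing $F_P(Q) = 0$. Hence $F_P\cong S^\Delta_{P,\CC}$.

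Finally, the subfunctors $F_P$ for distinct non-cyclic $p$-groups $P$ are pairwise non-isomorphic simple subfunctors of $S_{C_p\times C_p,\CC}$, hence their sum inside $S_{C_p\times C_p,\CC}$ is direct; and since every composition factor of $S_{C_p\times C_p,\CC}$ is some $S^\Delta_{P,\CC}$ occurring with multiplicity one by the first paragraph, the inclusion $\bigoplus_P F_P\hookrightarrow S_{C_p\times C_p,\CC}$ must be an equality by counting composition factors evaluation-by-evaluation. This gives the claimed isomorphism.

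The main obstacle I anticipate is the simplicity of $F_P$: making precise that the generator $\overline{e^P_P}$ cannot "travel upward" to a larger non-cyclic $p$-group inside $S_{C_p\times C_p,\CC}$. The clean way is the $\tilde e^Q_Q$-argument above, using that $\underline{S}(Q)\subseteq\tilde e^Q_Q S(Q)$ from the proof of Proposition~\ref{prop restkerelemabelian} together with the observation that composing along a morphism $P\to Q$ through intermediate groups either factors through a proper section (killed since $\overline{e^P_P}$ restricts to zero on proper subgroups and the relevant deflation numbers for $p$-groups only vanish in the cyclic-quotient direction) or, when $|Q:P|$ is coprime to $p$, produces an element fixed by no nontrivial idempotent component $\tilde e^Q_Q$. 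One could alternatively invoke the comparison with Webb's functors $\Delta_{H,V}$ as in the proof of Proposition~\ref{prop simpleprimitiveBrauer}, restricting to the subcategory of subquotients of $Q$ where $p$-bifree and ordinary biset functors agree and hence the restricted functor is semisimple, forcing the restriction of $F_P$ to be simple with minimal group $P$ — a contradiction with any larger minimal group. Either route is routine given the machinery already set up.
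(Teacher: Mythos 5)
Your proposal is correct and follows essentially the same route as the paper: first the composition factors via Propositions~\ref{prop restkerelemabelian} and \ref{prop conditionelemabelian} together with Theorem~\ref{thm thecondition}, then realizing each $S^\Delta_{P,\CC}$ as the subfunctor $F_P$ generated by $\overline{e^P_P}$ and proving simplicity by showing $\tilde e^Q_Q F_P(Q)=0$ for $Q\not\cong P$, so that no larger group can be a minimal group of a nonzero subfunctor. One caveat: your stated justifications for that orthogonality step (``since $|Q|/|P|$ need not be a $p$-power'', ``when $|Q:P|$ is coprime to $p$'', and the appeal to deflation numbers) are off the mark, since both groups are $p$-groups; the clean reason, which is the paper's, is that a transitive $p$-bifree $(Q,P)$-biset has trivial kernels, so $F_P(Q)$ is spanned by $\Ind^Q_P e^P_P$, a multiple of $e^Q_P$, which is orthogonal to $e^Q_Q$ when $P\not\cong Q$.
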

\begin{proof}
    By Propositions~\ref{prop restkerelemabelian} and \ref{prop conditionelemabelian} and Theorem~\ref{thm thecondition}, the composition factors of $S_{C_p\times C_p,\CC}$ are exactly the functors $S_{P,\CC}^\Delta$, where $P$ is a non-cyclic $p$-group, each with multiplicity one. We will show that each of these functors is a subfunctor.

    Let $F$ be a $p$-bifree biset subfunctor of $S:=S_{C_p\times C_p,\CC}$ and let $G$ be a minimal group of $F$. Then we have $\{0\}\neq F(G)\subseteq \underline{S}(G)$ which by Proposition~\ref{prop restkerelemabelian} implies that $G$ is a non-cyclic $p$-group and that $F(G)$ is one-dimensional generated by $\overline{e^G_G}$. Conversely, if $G$ is a non-cyclic $p$-group, then the subfunctor generated by $\underline{S}(G)$ has a minimal group $G$. 

    For a non-cyclic $p$-group $P$, let $F_P$ be the subfunctor of $S$ generated by $\overline{e^P_P}$. Then $F_P$ has a minimal group $P$. Moreover, if $P'$ is a $p$-group such that $F_P(P')\neq 0$, then $P$ is isomorphic to a subgroup of $P'$. Indeed, if $X$ is a transitive $p$-bifree $(P',P)$-biset such that $X\circ e^P_P\neq 0$, then $p_2(X)=P$ and $k_2(X)=1=k_1(X)$. 

    We claim that $F_P$ is a simple $p$-bifree biset functor isomorphic to $S^\Delta_{P,\CC}$. To show that it suffices to prove that if $P'$ is a non-cyclic $p$-group not isomorphic to $P$, then $F_{P'}\not\subseteq F_P$. By above $F_P(P')=0$ if $P$ is not isomorphic to a subgroup of $P'$. Also, if $P$ is isomorphic to a subgroup of $P'$, then $F_P(P')$ is one-dimensional generated by $\Ind^{P'}_P e^P_P$, or by $e^{P'}_P$. In both cases, it follows that if $P\not\cong P'$, then $e^{P'}_{P'} F_P(P')=0$. But $e^{P'}_{P'} F_{P'}(P')=F_{P'}(P')\neq 0$. Thus, $F_{P'}\not\subseteq F_P$ and hence $F_P$ is simple. Since $F_P(P)$ is one-dimensional and generated by $\overline{e^P_P}$ which is invariant under any automorphism of $P$, we have $F_P(P)\cong \CC$. Therefore, $F_P\cong S^\Delta_{P,\CC}$. This proves the theorem. 
\end{proof}

We now turn to the case of $S_{C_q \times C_q,\CC}$ for a prime $q \ne p$. Here, the restriction kernel of the functor is more restricted. We identify the finite groups $G$ for which it is non zero and determine the associated simple components.
\begin{proposition}\label{prop restkerqelementary}
    {\rm (a)} Let $G$ be a finite group. One has $\underline{S_{C_q\times C_q,\CC}}(G)=\{0\}$ unless $G$ is isomorphic to $C_q\times C_q\times P$ for some cyclic $p$-group.

    \smallskip
    {\rm (b)} Let $P$ be a cyclic $p$-group and let $G=C_q\times C_q\times P$. Then $\underline{S_{C_q\times C_q,\CC}}(G)$ is one-dimensional generated by image of $e^G_G$ in $S_{C_q\times C_q,\CC}(G)$.
\end{proposition}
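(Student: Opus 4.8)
The plan is to follow the proof of Proposition~\ref{prop restkerelemabelian} step by step, the one new phenomenon being that a $q$-elementary group has a nontrivial $p'$-part (its whole $q$-part, since $q\neq p$) that can be deflated away inside $R\calC^\Delta$. As in that proof, for every $p$-bifree biset functor $S$ one has $\underline S(G)\subseteq\tilde e^G_G S(G)$, since $S(G)=\tilde e^G_G S(G)\oplus(1-\tilde e^G_G)S(G)$ with $1-\tilde e^G_G\in I_G$. Applying this with $S=S_{C_q\times C_q,\CC}$ (restricted to $R\calC^\Delta$), and combining the description of $S_{C_q\times C_q,\CC}$ as a quotient of classical biset subfunctors of the Burnside functor (cf. the proof of \cite[Theorem~5.5.4]{Bouc2010a}) with \cite[Corollary~3.6]{Bouc2023} applied to the prime $q$, one obtains: $\tilde e^X_X S_{C_q\times C_q,\CC}(X)$ is $\{0\}$ unless $X$ is $q$-elementary, and for $X$ $q$-elementary it equals $\CC\,\overline{e^X_X}$ (the image of $e^X_X$ in $S_{C_q\times C_q,\CC}(X)$), one-dimensional when the Sylow $q$-subgroup $O_q(X)$ is non-cyclic and zero when $O_q(X)$ is cyclic; in particular $S_{C_q\times C_q,\CC}$ vanishes on cyclic groups. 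Thus it suffices to take a $q$-elementary $G$ with $O_q(G)$ non-cyclic for which $\overline{e^G_G}\in\underline{S_{C_q\times C_q,\CC}}(G)$, and to show $G\cong C_q\times C_q\times P$ with $P$ a cyclic $p$-group: this establishes (a) (otherwise the restriction kernel vanishes) and the inclusion $\underline{S_{C_q\times C_q,\CC}}(G)\subseteq\CC\,\overline{e^G_G}$ of (b).

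So write $G=Q\times C$ with $Q=O_q(G)$ non-cyclic and $C$ cyclic of order prime to $q$, and split $C=C_p\times C_0$ into its Sylow $p$-subgroup $C_p$ (a cyclic $p$-group) and the complementary cyclic group $C_0$ of order prime to $pq$. By \cite[Proposition~2.5.14]{Bouc2010a} $e^G_G=e^Q_Q\times e^{C_p}_{C_p}\times e^{C_0}_{C_0}$, and deflation along a direct factor acts only on that factor. If $C_0\neq 1$, deflating along $C_0$ gives, using $\Def^{C_0}_{1}e^{C_0}_{C_0}=m_{C_0,C_0}e^{1}_{1}$ with $m_{C_0,C_0}\neq 0$ ($C_0$ cyclic, cf. \cite[Proposition~5.6.1]{Bouc2010a}), the element $m_{C_0,C_0}\,\overline{e^{Q\times C_p}_{Q\times C_p}}$ in $S_{C_q\times C_q,\CC}(Q\times C_p)$, which is nonzero since $Q\times C_p$ is $q$-elementary with non-cyclic Sylow $q$-subgroup; as $|Q\times C_p|<|G|$ this contradicts $\overline{e^G_G}\in\underline{S_{C_q\times C_q,\CC}}(G)$, so $C_0=1$ and $G=Q\times C_p$. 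Next, if $Q\not\cong C_q\times C_q$, then (the minimal group of a non-cyclic $q$-group being $C_q\times C_q$, cf. \cite[Section~5]{Bouc2010a}) there is a nontrivial normal subgroup $N\unlhd Q$ with $Q/N\cong C_q\times C_q$ and $m_{Q,N}\neq 0$; as $q\neq p$, $N$ is a normal $p'$-subgroup of $G$, and by \cite[Theorem~5.2.4]{Bouc2010a} deflating $\overline{e^G_G}$ along it yields $m_{Q,N}\,\overline{e^{(C_q\times C_q)\times C_p}_{(C_q\times C_q)\times C_p}}\neq 0$ (again $q$-elementary with non-cyclic Sylow $q$-subgroup), contradicting $\overline{e^G_G}\in\underline{S_{C_q\times C_q,\CC}}(G)$ since $|G/N|<|G|$. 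Therefore $Q\cong C_q\times C_q$ and $G\cong C_q\times C_q\times P$ with $P=C_p$ a cyclic $p$-group, proving (a) and the inclusion needed for (b).

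For the reverse inclusion in (b), let $G=C_q\times C_q\times P$ with $P$ a cyclic $p$-group; then $\overline{e^G_G}\neq 0$ ($G$ being $q$-elementary with non-cyclic Sylow $q$-subgroup), and it remains to check $S_{C_q\times C_q,\CC}(\alpha)\,\overline{e^G_G}=0$ for every transitive $p$-bifree biset $\alpha=[(H\times G)/L]$ with $|H|<|G|$. Using the canonical factorization $\alpha=\Ind^H_{p_1(L)}\Inf^{p_1(L)}_{p_1(L)/k_1(L)}\Isom(f)\Def^{G}_{G/k_2(L)}\Res^G_{p_2(L)}$ with $k_1(L),k_2(L)$ of $p'$-order: if $p_2(L)<G$ then $\Res^G_{p_2(L)}e^G_G=0$, so we are done; hence assume $p_2(L)=G$, so $k_2(L)$ is a normal $p'$-subgroup of $G$, hence contained in the Hall $p'$-subgroup $C_q\times C_q$. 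If $k_2(L)\neq 1$, then $G/k_2(L)\cong (C_q\times C_q)/k_2(L)\times P$ is cyclic, so $S_{C_q\times C_q,\CC}(G/k_2(L))=\{0\}$, and since $\alpha$ factors through $\Def^{G}_{G/k_2(L)}$ in $R\calC^\Delta$ we get $S_{C_q\times C_q,\CC}(\alpha)\,\overline{e^G_G}=0$. If $k_2(L)=1$, then $f$ is an isomorphism $G\cong p_1(L)/k_1(L)$, so $|H|\ge|p_1(L)|=|G|\,|k_1(L)|\ge|G|$, contradicting $|H|<|G|$; this case does not occur. Hence $\overline{e^G_G}\in\underline{S_{C_q\times C_q,\CC}}(G)$, and together with the previous paragraph this shows $\underline{S_{C_q\times C_q,\CC}}(G)=\CC\,\overline{e^G_G}$ is one-dimensional.

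The main obstacle, as in Proposition~\ref{prop restkerelemabelian}, is pinning down $\tilde e^G_G S_{C_q\times C_q,\CC}(G)$ — that it is spanned by $\overline{e^G_G}$ and nonvanishing exactly for $q$-elementary groups with non-cyclic Sylow $q$-subgroup — which relies on the classical structure of $S_{C_q\times C_q,\CC}$ and on \cite[Corollary~3.6]{Bouc2023} applied with the prime $q$. The step specific to the present statement, and the one to be handled carefully, is the deflation argument forcing $O_q(G)\cong C_q\times C_q$: it uses crucially that $O_q(G)$ is a $p'$-group (because $q\neq p$), so that the $B$-group reduction of a non-cyclic $q$-group to its quotient $C_q\times C_q$ is realized by a deflation living in the $p$-bifree category, and that this quotient, being itself non-cyclic, is still detected by $S_{C_q\times C_q,\CC}$.
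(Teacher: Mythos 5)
Your proposal is correct and follows essentially the same route as the paper: reduce to $\tilde e^G_G S_{C_q\times C_q,\CC}(G)$ via $1-\tilde e^G_G\in I_G$, invoke \cite[Corollary~3.6]{Bouc2023} and \cite[Theorem~5.5.4]{Bouc2010a} for the structure of that space, and then use deflations along the prime-to-$pq$ part of the cyclic factor and along $N\unlhd O_q(G)$ with $O_q(G)/N\cong C_q\times C_q$ (both legitimate in the $p$-bifree category since $q\neq p$) to force $G\cong C_q\times C_q\times P$. The only difference is that you spell out the verification that $\overline{e^G_G}$ actually lies in the restriction kernel when $G=C_q\times C_q\times P$, a step the paper dismisses as clear; your factorization argument for it is correct.
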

\begin{proof}
        The proof is similar to the proof of Proposition~\ref{prop restkerelemabelian}. By \cite[Corollary~3.6]{Bouc2023}, $\tilde e^G_G S_{C_q\times C_q,\CC}(G)$ is non-zero if and only if $G$ is $q$-elementary. Furthermore, in this case, by Theorem~5.5.4 in \cite{Bouc2010a}, $\tilde e^G_GS_{C_q\times C_q,\CC}(G)$ is generated by the image $\overline{e^G_G}$ in $S_{C_q\times C_q,\CC}(G)$ of $e^G_G$ if $G$ is not cyclic and is equal to zero if $G$ is cyclic. 

        Suppose $G=Q\times C$ is a non-cyclic $q$-elementary group where $Q=O_q(G)$ and $C$ is a cyclic $q'$-group. Write $C=P\times K$ where $P=O_p(G)$ and $K$ is a cyclic group of order prime to $p$ and $q$. Then since $K$ is cyclic, $m_{K,K}\neq 0$ and therefore,
        \begin{align*}
            \Def^G_{G/K}e^G_G=e^{Q\times P}_{Q\times P}\times \Def^K_{K/K}e^K_K=m_{K,K}e^{Q\times P}_{Q\times P}\times e^{K/K}_{K/K}\neq 0\,.
        \end{align*}
        This implies that $\underline{S_{C_q\times C_q,\CC}}(G)=\{0\}$ unless $K=1$. Moreover, since $Q$ is a non-cyclic $q$-group, one has $\beta(Q)\cong C_q\times C_q$. If $N\unlhd Q$ such that $Q/N\cong \beta(Q)$, then $m_{Q,N}\neq 0$. This implies, as above, that $\underline{S_{C_q\times C_q,\CC}}(G)=\{0\}$ unless $Q\cong \beta(Q)\cong C_q\times C_q$. Finally, it is clear that if $G=C_q\times C_q\times P$ for some cyclic $p$-group $P$, then $S_{C_q\times C_q,\CC}(G)$ is non-zero generated by $\overline{e^G_G}$. 
\end{proof}

\begin{proposition}\label{prop conditionqelemabelian}
        Let $G$ be a finite group. Then for any $x\in S_{C_q\times C_q,\CC}(G)$, there exists $x'\in S_{C_q\times C_q,\CC}(G)$ and $\alpha\in I_G$ such that $x=x'+\alpha\cdot x$.
\end{proposition}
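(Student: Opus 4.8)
The plan is to adapt the scheme of Proposition~\ref{prop conditionelemabelian}, reinforced by the deflation-inflation step from the case $N_{p'}\neq 1$ in the proof of Lemma~\ref{lem conditionforprimitive}. Write $S=S_{C_q\times C_q,\CC}$, viewed as a subquotient of $\CC B$ as in the proof of Proposition~\ref{prop restkerqelementary}. The structural inputs, all already present in that proof, are: $1-\tilde e^G_G\in I_G$; $S(G)=\tilde e^G_G S(G)\oplus(1-\tilde e^G_G)S(G)$; and $\tilde e^G_G S(G)=0$ unless $G$ is $q$-elementary and noncyclic, in which case $\tilde e^G_G S(G)=\CC\,\overline{e^G_G}$ with $\overline{e^G_G}\neq 0$. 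Given $x\in S(G)$, I would decompose $x=\tilde e^G_G x+(1-\tilde e^G_G)x$; since $1-\tilde e^G_G\in I_G$ the second summand is already of the form $\alpha\cdot x$, so the whole problem reduces to placing $\tilde e^G_G x\in\tilde e^G_G S(G)$ either into $\underline S(G)$ or into $I_G\cdot x$.

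Two cases are immediate. If $\tilde e^G_G S(G)=0$, then $\tilde e^G_G x=0$ and one takes $x'=0$, $\alpha=1-\tilde e^G_G$. If $G\cong C_q\times C_q\times P$ for a cyclic $p$-group $P$, then Proposition~\ref{prop restkerqelementary}(b) gives $\overline{e^G_G}\in\underline S(G)$, hence $\tilde e^G_G x\in\underline S(G)$, and one takes $x'=\tilde e^G_G x$, $\alpha=1-\tilde e^G_G$.

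The substantive case is when $G$ is $q$-elementary and noncyclic but not isomorphic to any $C_q\times C_q\times P$; then $\underline S(G)=0$ by Proposition~\ref{prop restkerqelementary}(a), so one must produce $\alpha\in I_G$ with $\alpha\cdot x=x$. Write $G=Q\times P\times K$ as in the proof of Proposition~\ref{prop restkerqelementary}, where $Q=O_q(G)$, $P=O_p(G)$, and $K$ has order coprime to $pq$. That proof already singles out a nontrivial $p'$-normal subgroup $N\unlhd G$ with $m_{G,N}\neq 0$ and $\Def^G_{G/N}e^G_G=m_{G,N}e^{G/N}_{G/N}$ --- one takes $N=K$ when $K\neq 1$, and otherwise a normal subgroup $N$ of $Q$ with $Q/N\cong C_q\times C_q$, which exists, is nontrivial, and has $m_{Q,N}\neq 0$ because $\beta(Q)\cong C_q\times C_q$. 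I would then set $\gamma=m_{G,N}^{-1}\,\tilde e^G_G\,\Inf^G_{G/N}\Def^G_{G/N}$: since $\Inf^G_{G/N}$ and $\Def^G_{G/N}$ are $p$-bifree (as $N$ is a nontrivial $p'$-group) and their composite factors through the proper quotient $G/N$, we have $\gamma\in I_G$. The point is that $\gamma$ acts as the identity on the line $\tilde e^G_G S(G)=\CC\,\overline{e^G_G}$: the deflation formula gives $\gamma\,\overline{e^G_G}=\tilde e^G_G\,\overline{\Inf^G_{G/N}e^{G/N}_{G/N}}$, and since $\Inf^G_{G/N}e^{G/N}_{G/N}-e^G_G$ is a $\CC$-combination of primitive idempotents $e^G_H$ with $H<G$, the orthogonality $e^G_G e^G_H=0$ for such $H$ together with the fact (used throughout the proof of Proposition~\ref{prop restkerqelementary}) that $\tilde e^G_G$ acts on $S(G)$ as multiplication by $e^G_G$ collapses this to $\overline{e^G_G}$. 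Hence $(\gamma\tilde e^G_G)\cdot x=\tilde e^G_G x$, so $x=(\gamma\tilde e^G_G)\cdot x+(1-\tilde e^G_G)\cdot x=(\gamma\tilde e^G_G+1-\tilde e^G_G)\cdot x$, and one concludes with $x'=0$ and $\alpha=\gamma\tilde e^G_G+1-\tilde e^G_G\in I_G$ (both summands lying in $I_G$).

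The hard part will be this last case: $\tilde e^G_G x$ is in general a nonzero element lying outside the restriction kernel, which must be reabsorbed into $I_G\cdot x$ by a loop through a proper quotient $G/N$ that fixes $\overline{e^G_G}$. Choosing the correct $N$ for every $q$-elementary noncyclic $G$ that is not $C_q\times C_q\times P$, and checking that the lower-order terms $e^G_H$ ($H<G$) produced by inflation are annihilated by $\tilde e^G_G$, is the delicate point; it relies on the identification of $\tilde e^G_G$ with multiplication by $e^G_G$, the deflation formula $\Def^G_{G/N}e^G_G=m_{G,N}e^{G/N}_{G/N}$ of \cite{Bouc2010a}, the nonvanishing of $m_{G,N}$ for the chosen $N$ (because $K$ is a cyclic direct factor, respectively $Q/N\cong\beta(Q)$), and the orthogonality of the primitive idempotents of $\KK B(G)$ --- all of which are already used in the proof of Proposition~\ref{prop restkerqelementary}, so what remains is bookkeeping with the coprime direct factors of $G$ rather than a new idea.
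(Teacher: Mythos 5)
Your proposal is correct, and in fact it is more complete than the paper's own argument, which simply declares the proof ``similar to'' that of Proposition~\ref{prop conditionelemabelian} (where the only $\alpha$ ever used is $1-\tilde e^G_G$). You correctly spot that this analogy does not literally cover the case of a non-cyclic $q$-elementary group $G$ that is not isomorphic to $C_q\times C_q\times P$: there $\tilde e^G_G S(G)=\CC\,\overline{e^G_G}\neq\{0\}$ while $\underline{S}(G)=\{0\}$, so $\tilde e^G_G x$ can neither be declared zero nor placed in the restriction kernel, and one genuinely needs an element of $I_G$ fixing $\overline{e^G_G}$. Your fix --- the loop $\gamma=m_{G,N}^{-1}\,\tilde e^G_G\,\Inf^G_{G/N}\Def^G_{G/N}$ through a proper quotient with $N$ a non-trivial normal $p'$-subgroup (namely $N=K$ when $K\neq 1$, or $N\unlhd Q$ with $Q/N\cong\beta(Q)\cong C_q\times C_q$ otherwise) chosen so that $m_{G,N}\neq 0$ --- is exactly the mechanism already deployed in the $N_{p'}\neq 1$ case of Lemma~\ref{lem conditionforprimitive}, and the verification that $\gamma\,\overline{e^G_G}=\overline{e^G_G}$ via $\Def^G_{G/N}e^G_G=m_{G,N}e^{G/N}_{G/N}$, the orthogonality $e^G_G e^G_H=0$ for $H<G$, and the identification of $\tilde e^G_G$ with multiplication by $e^G_G$ on the subquotient of $\KK B$, all checks out; so does the membership $\gamma\tilde e^G_G+1-\tilde e^G_G\in I_G$. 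So you have not taken a different route but rather executed the paper's intended route with the missing case supplied; the same gap is, incidentally, present in the paper's proof of Proposition~\ref{prop conditionelemabelian} itself for non-cyclic $p$-elementary groups with a non-trivial cyclic $p'$-factor, and your argument repairs that case by the identical device.
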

\begin{proof}
    The proof is similar to the proof of Proposition~\ref{prop conditionelemabelian}
\end{proof}

As in the previous case, the functor $S_{C_q \times C_q, \CC}$ decomposes as a direct sum of simple $p$-bifree biset functors. In this case, the index set consists of groups of the form $C_q \times C_q \times P$, where $P$ is a cyclic $p$-group.
\begin{theorem}\label{thm elemqabelian}
    One has an isomorphism
    \begin{align*}
        S_{C_q\times C_q,\CC}\cong \bigoplus_{\substack{C_q\times C_q\times P\\ P: \text{cyclic } p\text{-group}}} S_{C_q\times C_q\times P,\CC}^\Delta
    \end{align*}
    of $p$-bifree biset functors.
\end{theorem}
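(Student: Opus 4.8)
The plan is to follow the proof of Theorem~\ref{thm elemabelian} almost verbatim, with $G := C_q\times C_q\times P$ (for a cyclic $p$-group $P$) in the role played there by a non-cyclic $p$-group. Combining Propositions~\ref{prop restkerqelementary} and~\ref{prop conditionqelemabelian} with the restriction-kernel machinery of Theorem~\ref{thm thecondition}, one obtains that the composition factors of $S := S_{C_q\times C_q,\CC}$ are exactly the functors $S_{C_q\times C_q\times P,\CC}^\Delta$, $P$ a cyclic $p$-group, each of multiplicity one. It therefore suffices to exhibit each of these simple functors as a subfunctor of $S$: $S$ then has finite length with a multiplicity-free composition series all of whose factors are realised inside $S$, so the sum of these subfunctors is forced to be direct and to exhaust $S$. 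As in Theorem~\ref{thm elemabelian}, Proposition~\ref{prop restkerqelementary} moreover tells us that the minimal group of any nonzero subfunctor of $S$ is isomorphic to a group $G=C_q\times C_q\times P$ with $P$ a cyclic $p$-group, and that $\underline{S}(G)$ is the line spanned by the image $\overline{e^G_G}$ of $e^G_G$ in $S(G)$.

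Next I would fix $G = C_q\times C_q\times P$ and study the subfunctor $F_G\subseteq S$ generated by $\overline{e^G_G}$; it has $G$ as a minimal group and $F_G(G)=\CC\,\overline{e^G_G}$. The one genuine computation is to control $F_G(G')$ for $G'=C_q\times C_q\times P'$ with $P'$ a cyclic $p$-group. Given a transitive $p$-bifree $(G',G)$-biset $[(G'\times G)/L]$ whose image of $\overline{e^G_G}$ is nonzero, write it in the canonical five-term form. Since $e^G_G$ restricts to $0$ on every proper subgroup we get $p_2(L)=G$; the five-term expression then involves $\Def^G_{G/k_2(L)}\overline{e^G_G}$, which lies in $S$ evaluated at $G/k_2(L)$ — and if $k_2(L)\neq 1$ this is a cyclic group, on which $S_{C_q\times C_q,\CC}$ vanishes — so $k_2(L)=1$; finally, since $p_1(L)\le G'=C_q\times C_q\times P'$ has a Sylow $q$-subgroup of order at most $q^2$, while $p_1(L)/k_1(L)\cong G$ has a Sylow $q$-subgroup of order $q^2$ and $k_1(L)$ is a $q$-group, we get $k_1(L)=1$. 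Hence $L$ is a twisted diagonal with $p_1(L)\cong G$, so $F_G(G')\neq 0$ forces $G$ to embed into $G'$, i.e.\ $|P|\le|P'|$; and since all such $L$ yield the same image, $F_G(G')$ is then the line spanned by $\Ind^{G'}_{G_0}\overline{e^{G_0}_{G_0}}$, where $G_0\le G'$ is the unique subgroup isomorphic to $G$ (the full $q$-part of $G'$ together with the order-$|P|$ subgroup of the cyclic group $P'$); a short mark computation identifies this with a nonzero multiple of $\overline{e^{G'}_{G_0}}$.

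Finally I would deduce simplicity of $F_G$ as in Theorem~\ref{thm elemabelian}. If $G'=C_q\times C_q\times P'$ with $|P'|<|P|$, then $F_G(G')=0$ by the above while $F_{G'}(G')\ne 0$, so $F_{G'}\not\subseteq F_G$; if $|P'|>|P|$, then $G_0<G'$ is proper, so $e^{G'}_{G'}\cdot\overline{e^{G'}_{G_0}}=0$ by orthogonality of the primitive idempotents in $\CC B(G')$ — here $e^{G'}_{G'}$ acts on $S(G')$ through the $p$-bifree $(G',G')$-biset implementing multiplication by $e^{G'}_{G'}$ on $\CC B(G')$ — whence $e^{G'}_{G'}F_G(G')=0$, whereas $e^{G'}_{G'}F_{G'}(G')=F_{G'}(G')$ contains the nonzero element $\overline{e^{G'}_{G'}}$, so again $F_{G'}\not\subseteq F_G$. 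Since every nonzero subfunctor of $F_G$ is generated, at its minimal group $C_q\times C_q\times P'$, by $\overline{e^{G'}_{G'}}$ and hence contains $F_{G'}$, this forces $F_G$ to be simple; and because $\overline{e^G_G}$ is fixed by every automorphism of $G$, the $\CC\Out(G)$-module $F_G(G)$ is trivial, so $F_G\cong S_{G,\CC}^\Delta$. Letting $P$ range over the cyclic $p$-groups gives the asserted decomposition.

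None of these steps is deep — the $p$-group analogue is treated as routine in Theorem~\ref{thm elemabelian} — but two points require the care specific to this case. The bookkeeping forcing $k_1(L)=k_2(L)=1$ genuinely exploits that the $q$-part $C_q\times C_q$ is rigid (it is its own unique subgroup of that isomorphism type) and that quotienting $G$ by a nontrivial normal $p'$-subgroup produces a cyclic group, which is invisible to $S_{C_q\times C_q,\CC}$; this is where the argument diverges from the $p$-group case, in which no $q$-part had to be tracked. And one must check that multiplication by $e^{G'}_{G'}$ is implemented by a genuine morphism in $\calC^\Delta$, so that it operates on the subquotient $S(G')$ of $\CC B(G')$ exactly as it does on $\CC B(G')$ itself.
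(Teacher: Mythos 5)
Your proposal is correct and follows essentially the same route as the paper's proof: the same two propositions together with Theorem~\ref{thm thecondition} give the multiplicity-one list of composition factors, each $S^\Delta_{C_q\times C_q\times P,\CC}$ is realised as the subfunctor $F_G$ generated by $\overline{e^G_G}$, and simplicity is detected exactly as you do, by applying $e^{G'}_{G'}$ (your extra detail forcing $k_1(L)=k_2(L)=1$ fills in a step the paper dismisses with ``one can show''). The only blemish is the phrase ``finite length'': $S$ has infinitely many composition factors, one per cyclic $p$-group $P$, but since each evaluation is finite-dimensional and only finitely many $F_G$ are nonzero at any given group, your exhaustion-and-directness argument still goes through.
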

\begin{proof}
    The proof is similar to the proof of Theorem~\ref{thm elemabelian}. By Propositions~\ref{prop restkerqelementary} and \ref{prop conditionqelemabelian} and Theorem~\ref{thm thecondition}, the composition factors of $S_{C_q\times C_q,\CC}$ are exactly the functors $S_{C_q\times C_q\times P,\CC}^\Delta$, where $P$ is a cyclic $p$-group, each with multiplicity one. We will show that each of these functors is a subfunctor.
    
    Let $F$ be a $p$-bifree biset subfunctor of $S:=S_{C_q\times C_q,\CC}$ and let $G$ be a minimal group of $F$. Then we have $\{0\}\neq F(G)\subseteq \underline{S}(G)$ which implies by Proposition~\ref{prop restkerqelementary} that $G$ is isomorphic to $C_q\times C_q\times P$ for some cyclic $p$-group $P$ and that $F(G)$ is one-dimensional generated by $\overline{e^G_G}$. Conversely, if $G\cong C_q\times C_q\times P$ where $P$ is a cyclic $p$-group, then the subfunctor generated by $\underline{S}(G)$ is one-dimensional and has a minimal group $G$. 

    Let $P$ be a cyclic $p$-group and set $G=C_q\times C_q\times P$. Let $F_G$ be the subfunctor of $S$ generated by $\overline{e^G_G}$. We claim that $F_G$ is a simple $p$-bifree biset functor isomorphic to $S_{C_q\times C_q\times P,\CC}^\Delta$. Note that $F_G$ has a minimal group $G$. To show that it is simple, it suffices to show that if $G'=C_q\times C_q\times P'$ is group not isomorphic to $G$, where $P'$ is a cyclic $p$-group, then $F_{G'}\not\subseteq F_G$. One can show that $F_G(G')$ is zero unless $P$ is isomorphic to a subgroup of $P'$. Moreover, in this case, it is one-dimensional generated by $\Ind^{G'}_G e^G_G$, or by $e^{G'}_G$. It follows that if $G'\not\cong G$, then $e^{G'}_{G'}F_G(G')=0$. This implies that $F_{G'}\not\subseteq F_G$. Hence $F_G$ is simple. Since $\overline{e^G_G}$ is invariant under any automorphism of $G$, it follows that $F_G\cong S^\Delta_{G,\CC}$. 
\end{proof}

\section{Appendix}\label{sec appendix}

Let $R$ be a commutative ring and let $\KK$ be a field.

\begin{lemma}\label{lem: restrictionkernelsection}
    Let $F$ be a ($p$-bifree) biset functor over $R$ and $G$ a finite group. Let $\mathrm{Sec}(G)$ be the set of all proper sections of $G$. Then we have
    \[
    \underline{F}(G) = \bigcap_{\substack{H\in \mathrm{Sec}(G)\\ \alpha\in R B^\Delta(H,G)}} \ker (F(\alpha):F(G)\to F(H))\,.
    \]
\end{lemma}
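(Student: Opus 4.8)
The plan is to prove the two inclusions separately; the inclusion $\underline F(G)\subseteq\bigcap_{H,\alpha}\ker F(\alpha)$ over proper sections is immediate, while the reverse inclusion uses the five-term factorization of transitive $p$-bifree bisets recalled in Section~2.

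For the easy inclusion, note that every proper section $Y/X$ of $G$ is a finite group of order $|Y|/|X|<|G|$ (the inequality is strict because $(Y,X)\neq(G,1)$ forces $Y\lneq G$ or $X\neq 1$). Thus, identifying a section with its isomorphism type, the family of pairs $(H,\alpha)$ with $H\in\mathrm{Sec}(G)$ and $\alpha\in RB^\Delta(H,G)$ indexing the right-hand side is a subfamily of the pairs $(H,\alpha)$ with $|H|<|G|$ indexing $\underline F(G)$. An intersection of kernels over a larger index set is contained in the one over a subset, so $\underline F(G)$ lies in the right-hand side.

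For the reverse inclusion, fix $x$ in the right-hand side, an arbitrary finite group $H$ with $|H|<|G|$, and $\alpha\in RB^\Delta(H,G)$; it suffices to show $F(\alpha)(x)=0$, and by $R$-linearity of $F$ we may take $\alpha=[(H\times G)/L]$ a single transitive $p$-bifree $(H,G)$-biset. Writing it via that factorization,
\[
[(H\times G)/L]=\big(\Ind^H_{p_1(L)}\circ\Inf^{p_1(L)}_{p_1(L)/k_1(L)}\circ\Isom(f)\big)\circ\gamma,\qquad \gamma:=\Def^{p_2(L)}_{p_2(L)/k_2(L)}\circ\Res^G_{p_2(L)},
\]
the morphism $\gamma$ goes from $G$ to $K:=p_2(L)/k_2(L)$ and lies in $RB^\Delta(K,G)$, since restrictions are always $p$-bifree and $k_2(L)$ is a $p'$-group by $p$-bifreeness of $[(H\times G)/L]$. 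The key point is that $K$ is isomorphic to a \emph{proper} section of $G$: one has $|K|=|p_2(L)|/|k_2(L)|=|q(L)|=|p_1(L)/k_1(L)|\le|p_1(L)|\le|H|<|G|$, so $(p_2(L),k_2(L))\neq(G,1)$ and hence $(p_2(L),k_2(L))$ is a proper section of $G$ with quotient $K\in\mathrm{Sec}(G)$. Therefore $F(\gamma)(x)=0$ by the hypothesis on $x$, whence $F([(H\times G)/L])(x)=F\big(\Ind^H_{p_1(L)}\Inf^{p_1(L)}_{p_1(L)/k_1(L)}\Isom(f)\big)\big(F(\gamma)(x)\big)=0$. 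Summing over the transitive constituents of a general $\alpha$ yields $F(\alpha)(x)=0$, so $x\in\underline F(G)$.

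I do not anticipate a serious obstacle. The only delicate points are the elementary case analysis ruling out $(p_2(L),k_2(L))=(G,1)$ — which is precisely where the condition $|H|<|G|$ is used — and the observation that the middle deflation in the factorization is along the $p'$-group $k_2(L)$, so that $\gamma$ genuinely lies in $R\calC^\Delta$. The argument applies verbatim to ordinary biset functors on $R\calC$, simply dropping the $p'$-conditions.
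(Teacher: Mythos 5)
Your proof is correct and follows essentially the same route as the paper's: the easy inclusion comes from the index set of the right-hand side being a subfamily of that defining $\underline{F}(G)$, and the reverse inclusion factors a transitive $p$-bifree biset through $\Def^{p_2(L)}_{p_2(L)/k_2(L)}\circ\Res^G_{p_2(L)}$, which lands in a proper section. You are in fact slightly more careful than the paper in checking explicitly that this section is proper (using $|H|<|G|$) and that the deflation--restriction morphism is itself $p$-bifree because $k_2(L)$ is a $p'$-group.
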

\begin{proof}
    Denote the right hand side of the above equality by $\underline{\underline{F}}(G)$. Clearly $\underline{F}(G)\subseteq\underline{\underline{F}}(G)$. 
    
    For the reverse inclusion, let $T$ be a group of order less than $|G|$ and let $U\le T\times G$. Also let $x\in \underline{\underline{F}}(G)$. It is sufficient to prove that 
    \[
    \Big( \frac{T\times G}{U} \Big)\cdot x = 0. 
    \]
    Write
    \[
    \Big( \frac{T\times G}{U} \Big) = \Ind_P^T\Inf_{P/K}^T\mathrm{iso}\Def^Q_{Q/L}\Res^G_Q
    \]
    with the usual choices of the letters. Thus
    \[
        \Big( \frac{T\times G}{U} \Big)\cdot x = (\Ind_P^T\Inf_{P/K}^T\mathrm{iso}\Def^Q_{Q/L}\Res^G_Q)\cdot x = \Ind_P^T\Inf_{P/K}^T\mathrm{iso}(\Def^Q_{Q/L}\Res^G_Q \cdot x) = 0.
    \]
    Here $\Def^Q_{Q/L}\Res^G_Q \cdot x = 0$ since $x\in\underline{\underline{F}}(G)$.
\end{proof}

The following theorem is stated in \cite{BCK} for Green biset functors. But one can easily check that it is also valid for $p$-bifree biset functors. We include the proof for the sake of self-containment.

\begin{theorem}{\cite[Theorem 2.5]{BCK}}\label{thm thecondition}
Let $F$  be a biset functor over a field $ k $ and let $ G $ be a finite group. Suppose that $ \dim_k F(G) < \infty $, and that for every $ \mathcal E_G $-submodule $ M \subseteq F(G) $, one has
\[
M = (M \cap \underline F(G)) + I_G M.
\]
Then, for every simple $ k[\Out(G)]$-module $ V $, 
the following numbers are equal:
\begin{enumerate}
    \item Multiplicity $[F : S_{G,V}]$ of $S_{G, V}$ as a composition factor of $F$.
    \item Multiplicity of $V$ as a composition factor of the $\mathcal E_G$-module $F(G)$.
    \item Multiplicity of $V$ as a composition factor of the $k[\Out(G)]$-module $\underline F(G)$.
\end{enumerate}
\end{theorem}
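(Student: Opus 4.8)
The plan is to prove that all three numbers coincide by a single induction on $\dim_k F(G)$, using throughout that the evaluation functor $F \mapsto F(G)$ from ($p$-bifree) biset functors to modules over $E_G := k\BD(G,G)$ is exact, and recalling three facts from the general theory: the restriction kernel $\underline F(-)$ is a subfunctor of $F$; the submodule $\underline F(G) \subseteq F(G)$ is annihilated by $I_G$, hence is naturally a $k[\Out(G)]$-module via $E_G/I_G \cong k[\Out(G)]$; and for a simple functor one has $S_{G,V}(G) \cong V$ as $E_G$-modules while $S_{H,W}(G) = 0$ whenever $|H| \ge |G|$ and $H \not\cong G$. I will also use that the standing hypothesis is equivalent to the elementwise assertion that every $x \in F(G)$ can be written $x = x' + \alpha x$ with $x' \in \underline F(G)$ and $\alpha \in I_G$.

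The base case $\dim_k F(G) = 0$ is immediate. For the inductive step I would first treat the case $\underline F(G) = 0$: here the hypothesis, applied to an arbitrary $E_G$-submodule $M \subseteq F(G)$, gives $M = (M \cap \underline F(G)) + I_G M = I_G M$, so $I_G$ acts surjectively on every submodule; consequently $F(G)$ has no composition factor killed by $I_G$, i.e. $[F(G):V]_{E_G} = 0$ for every simple $k[\Out(G)]$-module $V$, and then $[F : S_{G,V}] = 0$ by exactness of evaluation, so all three numbers vanish.

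In the remaining case $\underline F(G) \neq 0$ I would choose a simple $k[\Out(G)]$-submodule $V_0 \subseteq \underline F(G)$ and let $F_0 \subseteq F$ be the subfunctor of $F$ generated by $V_0$. Since $V_0$ lies in the restriction kernel it is killed by every element of $k\BD(K,G)$ with $|K| < |G|$, so $F_0(K) = 0$ for all such $K$ and $F_0(G) = E_G V_0 = V_0$. Because $F_0$ vanishes below $|G|$, no simple functor $S_{H,W}$ with $|H| < |G|$ is a composition factor of $F_0$, and as $S_{H,W}(G) = 0$ for $|H| \ge |G|$, $H \not\cong G$, exactness of evaluation at $G$ yields $[F_0 : S_{G,V}] = [F_0(G):V]_{E_G} = [V_0 : V]_{k[\Out(G)]}$, which is $1$ if $V \cong V_0$ and $0$ otherwise. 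I would then show that $F' := F/F_0$ again satisfies the hypothesis at $G$ and that $\underline{F'}(G) \cong \underline F(G)/V_0$ as $k[\Out(G)]$-modules; both statements are obtained by pushing the data along the surjection $F(G) \twoheadrightarrow F(G)/V_0$ and using once more that $F_0$ vanishes on every group of order $< |G|$ (so the canonical map $F(K) \to F'(K)$ is an isomorphism for such $K$, which forces the induced map $\underline F(G) \to \underline{F'}(G)$ to be onto with kernel exactly $V_0$, and lets the decomposition $M = (M \cap \underline F(G)) + I_G M$ descend to submodules of $F'(G)$). Since $\dim_k F'(G) < \dim_k F(G)$, the induction hypothesis applies to $F'$; combining the contributions of $F_0$ and $F'$ along $0 \to F_0 \to F \to F' \to 0$ gives $[F : S_{G,V}] = [V_0:V] + \bigl([\underline F(G):V] - [V_0:V]\bigr) = [\underline F(G):V]$, and identically $[F(G):V]_{E_G} = [\underline F(G):V]$, which closes the induction.

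I expect the main obstacle to be the final step of the inductive argument: verifying that $F/F_0$ inherits the hypothesis and that its restriction kernel at $G$ is precisely $\underline F(G)/V_0$. Both rely essentially on $F_0$ vanishing on all groups of order strictly less than $|G|$, which is exactly why $V_0$ must be chosen inside the full restriction kernel $\underline F(G)$ — so that it is annihilated by all of $k\BD(K,G)$, not merely by the elementary restriction bisets — and why it matters that $\underline F(-)$ is an honest subfunctor, so that $F_0$ and $F/F_0$ remain objects of the functor category to which the induction is applied.
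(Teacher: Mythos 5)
Your proof is correct, but it takes a genuinely different route from the paper's. The paper works entirely inside the single $\mathcal E_G$-module $F(G)$: it quotes the equality of (1) and (2) as well known, fixes a composition series $0=M_0\subset\dots\subset M_n=F(G)$, intersects it with $K=\underline F(G)$, and uses the hypothesis together with the second isomorphism theorem to show that every factor $M_i/M_{i-1}\cong V$ (hence with $I_GM_i\subseteq M_{i-1}$) forces $(M_i\cap K)/(M_{i-1}\cap K)\cong V$ — otherwise $M_i=(M_i\cap K)+I_GM_i\subseteq M_{i-1}$, a contradiction — giving $[F(G):V]\le[K:V]$ and hence (2)=(3) in a few lines. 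You instead run an induction on $\dim_kF(G)$ at the level of functors, splitting off the subfunctor $F_0$ generated by a simple $k[\mathrm{Out}(G)]$-submodule $V_0\subseteq\underline F(G)$ and passing to $F/F_0$; the verifications you flag (that $\underline{F/F_0}(G)\cong\underline F(G)/V_0$ and that the hypothesis descends to $F/F_0$) do go through, precisely because $F_0$ vanishes on all groups of order $<|G|$, so the targets $F(K)$ are unchanged and one can pull a submodule of $F'(G)$ back to its preimage in $F(G)$ and project the decomposition $M=(M\cap\underline F(G))+I_GM$. What your approach buys is that it re-derives most of (1)=(2) along the way rather than quoting it; what it costs is the extra bookkeeping with quotient functors, plus one step you leave implicit: the lower bound $[F_0:S_{G,V_0}]\ge1$ (needed for $[F_0:S_{G,V}]=[V_0:V]$ rather than just $\le$), which requires either the well-known equality of (1) and (2) applied to $F_0$, or the fact that $F_0$, being a nonzero quotient of $L_{G,V_0}$, surjects onto $S_{G,V_0}$. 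Since both you and the paper treat that background as standard, this is a presentational remark rather than a gap.
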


\begin{proof} The equality of the first two numbers are well-known. We prove the equality of the last two numbers.
Since $ \dim_k F(G) < \infty $, there exists an $ \mathcal E_G $-composition series
\[
0 = M_0 \subset M_1 \subset \cdots \subset M_{n-1} \subset M_n = F(G)
\]
of $ F(G) $. Set $ K := \underline F(G) $, and consider the induced series
\[
0 = (M_0 \cap K) \subseteq (M_1 \cap K) \subseteq \cdots \subseteq (M_{n-1} \cap K) \subseteq (M_n \cap K) = K
\]
of $ \mathcal E_G $-submodules of $ K $. Let $ V $ be a simple $ k[\Out(G)]$-module, and let $ i \in \{1, \ldots, n\} $. We claim that if $ M_i / M_{i-1} \cong V $, then $ (M_i \cap K)/(M_{i-1} \cap K) \cong V $. This implies that $ [F(G) : V] \leq [K : V] $. But clearly, $ [K : V] \leq [F(G) : V] $, and we obtain equality. 

To prove the claim, observe that
\[
\frac{M_i \cap K}{M_{i-1} \cap K} = \frac{M_i \cap K}{(M_i \cap K) \cap M_{i-1}} \cong \frac{(M_i \cap K) + M_{i-1}}{M_{i-1}} \subseteq \frac{M_i}{M_{i-1}} \cong V.
\]
It therefore suffices to show that the left-hand side is nonzero. Assume by contradiction that $ M_i \cap K = M_{i-1} \cap K $. Then,
\[
M_i = (M_i \cap K) + I_G M_i \subseteq (M_i \cap K) + M_{i-1} = M_{i-1},
\]
contradicting $ M_i / M_{i-1} \cong V \neq 0 $. Here, we used the assumption that $ V $ is annihilated by $ I_G $, and the hypothesis of the theorem.
\end{proof}
\begin{Remark}
Let $F$ be a $p$-bifree biset functor over $\KK$ and let $G$ be a finite group. It is straightforward to prove that the condition in the above theorem is equivalent to any of the following conditions. See \cite[Proposition 2.6]{BCK} for a proof. 
\begin{enumerate}
    \item For every $x\in F(G)$, there exists $x'\in \underline{F}(G)$ and $\alpha\in I_G$ with $x=x'+\alpha\cdot x$. 
    \item For every subfunctor of $M$ of $F$ one has $M(G) = \underline M(G) + \mathcal J M(G)$.
\end{enumerate}
\end{Remark}

\begin{proposition}\label{prop: semisimpledecomposition}
Let $F$ be a semisimple $p$-bifree biset functor. For any finite group $G$, we have
\[
F(G) =\underline{F}(G) \oplus \mathcal JF(G).
\]
In particular, for every $x\in F(G)$, there exists $x'\in\underline{F}(G)$ and $\alpha\in I_G$ satisfying
\begin{align}
    x=x'+\alpha\cdot x\,.
\end{align}
\end{proposition}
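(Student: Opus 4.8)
The plan is to reduce to the case of a single simple functor, settle that case by a short analysis according to the order of the group, and then deduce the displayed factorization of $x$ by applying the decomposition to the subfunctor generated by $x$. The only step that is not routine bookkeeping is showing that the restriction kernel $\underline S(G)$ vanishes whenever $G$ is strictly larger than the minimal group of a simple functor $S$; once that is in hand, the whole evaluation $S(G)$ turns out to be absorbed into $\mathcal JS(G)$ and everything else is formal.

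\emph{Reduction to the simple case.} Write $F=\bigoplus_\lambda S_\lambda$ with each $S_\lambda$ a simple $p$-bifree biset functor. For each finite group $K$ one has $F(K)=\bigoplus_\lambda S_\lambda(K)$, and every structure map $F(\alpha)$ is block-diagonal for these decompositions; hence $\underline F(G)=\bigoplus_\lambda \underline{S_\lambda}(G)$ and $\mathcal JF(G)=\bigoplus_\lambda \mathcal JS_\lambda(G)$, and $I_G$ acts diagonally. It is therefore enough to prove $S(G)=\underline S(G)\oplus\mathcal JS(G)$ for each simple $S$.

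\emph{The simple case.} Let $S=S^\Delta_{H,V}$, so $H$ is the minimal group of $S$, $S(H)=V\neq 0$, and $S$ is generated by $S(H)=V$, i.e.\ $S(G)=\sum_{\varphi\in RB^\Delta(G,H)}S(\varphi)(V)$ for every $G$. If $G\cong H$, then $S(K)=0$ for $|K|<|G|$, so $\mathcal JS(G)=0$, while every morphism from $G$ to a strictly smaller group induces the zero map on $S(G)$; thus $\underline S(G)=S(G)$ and the decomposition is trivial. If $G\not\cong H$ and $|G|\le|H|$, then $S(G)=0$. Finally, suppose $G\not\cong H$ and $|G|>|H|$. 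Since $|H|<|G|$, the generation formula gives $S(G)=\sum_{\varphi\in RB^\Delta(G,H)}S(\varphi)(V)\subseteq\mathcal JS(G)$, so $\mathcal JS(G)=S(G)$. To see $\underline S(G)=0$, suppose $0\neq x\in\underline S(G)$; the subfunctor $\langle x\rangle$ generated by $x$ satisfies $\langle x\rangle(K)=\sum_{\varphi\in RB^\Delta(K,G)}S(\varphi)(x)$ for all $K$ and, by simplicity, equals $S$. But every $\varphi\in RB^\Delta(H,G)$ is one of the maps occurring in the definition of $\underline S(G)$ (as $|H|<|G|$), so $S(\varphi)(x)=0$ and $\langle x\rangle(H)=0$, contradicting $S(H)=V\neq 0$. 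Hence $\underline S(G)=0$ and $S(G)=\underline S(G)\oplus\mathcal JS(G)$; with the reduction this yields $F(G)=\underline F(G)\oplus\mathcal JF(G)$.

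\emph{The factorization of $x$.} Given $x\in F(G)$, let $M\subseteq F$ be the subfunctor generated by $x$. Since a subfunctor of a semisimple functor is again semisimple, the first part applies to $M$: $M(G)=\underline M(G)\oplus\mathcal JM(G)$; write $x=x'+y$ with $x'\in\underline M(G)\subseteq\underline F(G)$ and $y\in\mathcal JM(G)$. As $M$ is generated by $x$, $M(K)=\sum_{\varphi\in RB^\Delta(K,G)}F(\varphi)(x)$ for all $K$, so $\mathcal JM(G)$ is spanned by the elements $F(\psi)(F(\varphi)(x))=F(\psi\cdot_K\varphi)(x)$ with $|K|<|G|$, $\varphi\in RB^\Delta(K,G)$ and $\psi\in RB^\Delta(G,K)$. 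Each such composite $\psi\cdot_K\varphi$ lies in $I_G$, so $\mathcal JM(G)\subseteq I_G\cdot x$; therefore $y=\alpha\cdot x$ for some $\alpha\in I_G$, and $x=x'+\alpha\cdot x$ with $x'\in\underline F(G)$, as required.
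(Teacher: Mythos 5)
Your proof is correct and follows essentially the same route as the paper: decompose $F$ into its simple summands and identify the contribution of the simples with minimal group isomorphic to $G$ with $\underline{F}(G)$ and the contribution of the remaining simples with $\mathcal{J}F(G)$. You merely spell out the identification that the paper declares ``clear'', and you obtain the pointwise factorization $x=x'+\alpha\cdot x$ directly from the subfunctor generated by $x$ instead of invoking the remark preceding the proposition; both are fine.
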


\begin{proof}
    Let $n_{H, V}$ denote the multiplicity of the simple $p$-bifree biset functor $S_{H, V}^\Delta$ in $F$. Then evaluating at $G$, we have
    \[
    F(G) \cong \left(\bigoplus_{W} n_{G, W}S_{G, W}^\Delta(G) \right)\oplus \left(\bigoplus_{(H, V): |H|< |G|} n_{H, V}S_{H, V}^\Delta(G)\right)\,.
    \]
    It is clear that the first summand is equal to $\underline{F}(G)$ and that the second summand is equal to $\mathcal JF(G)$. This proves the first assertion. The second assertion follows from the above remark.
\end{proof}

%%%%%%%%%%%%%%%%%%%%%%%%%%%%%%%%%%%%%%%%%%%%%%%%%%%%%%%%%%%%%%%%%%%%%%%%%%%%%%%%%%%%%%%

\end{document}